\DeclareMathOperator{\dive}{div} 
\numberwithin{equation}{section}
\newcolumntype{C}{>{$\displaystyle} c <{$}}
\def\env@dmatrix{\hskip -\arraycolsep
	\let\@ifnextchar\new@ifnextchar
	\def\arraystretch{2}%
	\array{*{\c@MaxMatrixCols}{>{\displaystyle}c}}%
}
\DeclareFontShape{OMX}{cmex}{m}{n}{
	<-7.5> cmex7
	<7.5-8.5> cmex8
	<8.5-9.5> cmex9
	<9.5-> cmex10
}{}
\newcommand{\ad}[1]{\overline{#1}}
\def\ii{\mathfrak{i}}
\begin{document}

	\renewcommand{\thefootnote}{\fnsymbol{footnote}}
	
	\title{The Loewner Energy via the Renormalised Energy of \\  Moving Frames}

	\author{Alexis Michelat\footnote{EPFL B, Station 8, CH-1015 Lausanne, Switzerland \hspace{.5em} \href{alexis.michelat@epfl.ch}{alexis.michelat@epfl.ch}/ \href{mailto:alexis.michelat@normalesup.org}{alexis.michelat@normalesup.org}}\and Yilin Wang\footnote{Institut des Hautes \'Etudes Scientifiques, Bures-sur-Yvette, France \hspace{.6em}\href{mailto:yilin@ihes.fr}{yilin@ihes.fr}}}
	\date{\today}
	
	\maketitle
	
	\vspace{-0.5em}
	
	\begin{abstract}
		We obtain a new formula for the Loewner energy of Jordan curves 
		on the sphere, which is a K\"ahler potential for the essentially unique K\"ahler metric on the Weil-Petersson universal Teichm\"uller space, as the renormalised energy of moving frames on the two domains of the sphere delimited by the given curve. 
	\end{abstract}

	\tableofcontents
	\vspace{0cm}
	\begin{center}
		{Mathematical subject classification : 
		 53C42, 30C35.
			}
	\end{center}

	\theoremstyle{plain}
	\newtheorem*{theorem*}{Theorem}
	\newtheorem{theorem}{Theorem}[section]
	\newenvironment{theorembis}[1]
	{\renewcommand{\thetheorem}{\ref{#1}$'$}
		\addtocounter{theorem}{-1}%
		\begin{theorem}}
		{\end{theorem}}
	\renewcommand*{\thetheorem}{\Alph{theorem}}
	\newtheorem{lemme}[theorem]{Lemma}
	\newtheorem*{lemme*}{Lemma}
	\newtheorem{propdef}[theorem]{Definition-Proposition}
	\newtheorem*{propdef*}{Definition-Proposition}
	\newtheorem{prop}[theorem]{Proposition}
	\newtheorem{cor}[theorem]{Corollary}
	\theoremstyle{definition}
	\newtheorem*{definition}{Definition}
	\newtheorem{defi}[theorem]{Definition}
	\newtheorem{rem}[theorem]{Remark}
	\newtheorem*{rem*}{Remark}
	\newtheorem{rems}[theorem]{Remarks}
	\newtheorem{remimp}[theorem]{Important Remark}
	\newtheorem{exemple}[theorem]{Example}
	\newtheorem{defi2}{Definition}
	\newtheorem{propdef2}[defi2]{Proposition-Definition}
	\newtheorem{remintro}[defi2]{Remark}
	\newtheorem{remsintro}[defi2]{Remarks}
	\newtheorem{conj}{Conjecture}
	\newtheorem{question}{Open Question}
	\renewcommand\hat[1]{%
		\savestack{\tmpbox}{\stretchto{%
				\scaleto{%
					\scalerel*[\widthof{\ensuremath{#1}}]{\kern-.6pt\bigwedge\kern-.6pt}%
					{\rule[-\textheight/2]{1ex}{\textheight}}
				}{\textheight}%
			}{0.5ex}}%
		\stackon[1pt]{#1}{\tmpbox}
	}
	\parskip 1ex
	\newcommand{\totimes}{\ensuremath{\,\dot{\otimes}\,}}
	\newcommand{\vc}[3]{\overset{#2}{\underset{#3}{#1}}}
	\newcommand{\conv}[1]{\ensuremath{\underset{#1}{\longrightarrow}}}
	\newcommand{\A}{\ensuremath{\vec{A}}}
	\newcommand{\B}{\ensuremath{\vec{B}}}
	\newcommand{\C}{\ensuremath{\mathbb{C}}}
	\newcommand{\D}{\ensuremath{\nabla}}
	\newcommand{\Disk}{\ensuremath{\mathbb{D}}}
	\newcommand{\E}{\ensuremath{\vec{E}}}
	\newcommand{\I}{\ensuremath{\mathbb{I}}}
	\newcommand{\Q}{\ensuremath{\vec{Q}}}
	\newcommand{\loc}{\ensuremath{\mathrm{loc}}}
	\newcommand{\z}{\ensuremath{\bar{z}}}
	\newcommand{\hh}{\ensuremath{\mathscr{H}}}
	\newcommand{\h}{\ensuremath{\vec{h}}}
	\newcommand{\vol}{\ensuremath{\mathrm{vol}}}
	\newcommand{\hs}[3]{\ensuremath{\left\Vert #1\right\Vert_{\mathrm{H}^{#2}(#3)}}}
	\newcommand{\R}{\ensuremath{\mathbb{R}}}
	\renewcommand{\P}{\ensuremath{\mathbb{P}}}
	\newcommand{\N}{\ensuremath{\mathbb{N}}}
	\newcommand{\Z}{\ensuremath{\mathbb{Z}}}
	\newcommand{\p}[1]{\ensuremath{\partial_{#1}}}
	\newcommand{\Res}{\ensuremath{\mathrm{Res}}}
	\newcommand{\lp}[2]{\ensuremath{\mathrm{L}^{#1}(#2)}}
	\renewcommand{\wp}[3]{\ensuremath{\left\Vert #1\right\Vert_{\mathrm{W}^{#2}(#3)}}}
	\newcommand{\wpn}[3]{\ensuremath{\Vert #1\Vert_{\mathrm{W}^{#2}(#3)}}}
	\newcommand{\np}[3]{\ensuremath{\left\Vert #1\right\Vert_{\mathrm{L}^{#2}(#3)}}}
	\newcommand{\hp}[3]{\ensuremath{\left\Vert #1\right\Vert_{\mathrm{H}^{#2}(#3)}}}
	\newcommand{\ck}[3]{\ensuremath{\left\Vert #1\right\Vert_{\mathrm{C}^{#2}(#3)}}}
	\newcommand{\hardy}[2]{\ensuremath{\left\Vert #1\right\Vert_{\mathscr{H}^{1}(#2)}}}
	\newcommand{\lnp}[3]{\ensuremath{\left| #1\right|_{\mathrm{L}^{#2}(#3)}}}
	\newcommand{\npn}[3]{\ensuremath{\Vert #1\Vert_{\mathrm{L}^{#2}(#3)}}}
	\newcommand{\nc}[3]{\ensuremath{\left\Vert #1\right\Vert_{C^{#2}(#3)}}}
	\renewcommand{\Re}{\ensuremath{\mathrm{Re}\,}}
	\renewcommand{\Im}{\ensuremath{\mathrm{Im}\,}}
	\newcommand{\dist}{\ensuremath{\mathrm{dist}}}
	\newcommand{\diam}{\ensuremath{\mathrm{diam}\,}}
	\newcommand{\leb}{\ensuremath{\mathscr{L}}}
	\newcommand{\supp}{\ensuremath{\mathrm{supp}\,}}
	\renewcommand{\phi}{\ensuremath{\vec{\Phi}}}
	\renewcommand{\H}{\ensuremath{\vec{H}}}
	\renewcommand{\L}{\ensuremath{\vec{L}}}
	\renewcommand{\lg}{\ensuremath{\mathscr{L}_g}}
	\renewcommand{\ker}{\ensuremath{\mathrm{Ker}}}
	\renewcommand{\epsilon}{\ensuremath{\varepsilon}}
	\renewcommand{\bar}{\ensuremath{\overline}}
	\newcommand{\s}[2]{\ensuremath{\langle #1,#2\rangle}}
	\newcommand{\pwedge}[2]{\ensuremath{\,#1\wedge#2\,}}
	\newcommand{\bs}[2]{\ensuremath{\left\langle #1,#2\right\rangle}}
	\newcommand{\scal}[2]{\ensuremath{\langle #1,#2\rangle}}
	\newcommand{\sg}[2]{\ensuremath{\left\langle #1,#2\right\rangle_{\mkern-3mu g}}}
	\newcommand{\n}{\ensuremath{\vec{n}}}
	\newcommand{\ens}[1]{\ensuremath{\left\{#1\right\}}}
	\newcommand{\lie}[2]{\ensuremath{\left[#1,#2\right]}}
	\newcommand{\g}{\ensuremath{g}}
	\newcommand{\dzeta}{\ensuremath{\det\hphantom{}_{\kern-0.5mm\zeta}}}
	\newcommand{\e}{\ensuremath{\vec{u}}}
	\newcommand{\f}{\ensuremath{\vec{v}}}
	\newcommand{\ig}{\ensuremath{|\vec{\mathbb{I}}_{\phi}|}}
	\newcommand{\ik}{\ensuremath{\left|\mathbb{I}_{\phi_k}\right|}}
	\newcommand{\w}{\ensuremath{\vec{w}}}
	\newcommand{\hooklongrightarrow}{\lhook\joinrel\longrightarrow}
	\renewcommand{\tilde}{\ensuremath{\widetilde}}
	\newcommand{\vg}{\ensuremath{\mathrm{vol}_g}}
	\newcommand{\im}{\ensuremath{\mathrm{W}^{2,2}_{\iota}(\Sigma,N^n)}}
	\newcommand{\imm}{\ensuremath{\mathrm{W}^{2,2}_{\iota}(\Sigma,\R^3)}}
	\newcommand{\timm}[1]{\ensuremath{\mathrm{W}^{2,2}_{#1}(\Sigma,T\R^3)}}
	\newcommand{\tim}[1]{\ensuremath{\mathrm{W}^{2,2}_{#1}(\Sigma,TN^n)}}
	\renewcommand{\d}[1]{\ensuremath{\partial_{x_{#1}}}}
	\newcommand{\dg}{\ensuremath{\mathrm{div}_{g}}}
	\renewcommand{\Res}{\ensuremath{\mathrm{Res}}}
	\newcommand{\un}[2]{\ensuremath{\bigcup\limits_{#1}^{#2}}}
	\newcommand{\res}{\mathbin{\vrule height 1.6ex depth 0pt width
			0.13ex\vrule height 0.13ex depth 0pt width 1.3ex}}
	\newcommand{\ala}[5]{\ensuremath{e^{-6\lambda}\left(e^{2\lambda_{#1}}\alpha_{#2}^{#3}-\mu\alpha_{#2}^{#1}\right)\left\langle \nabla_{\e_{#4}}\vec{w},\vec{\mathbb{I}}_{#5}\right\rangle}}
	\setlength\boxtopsep{1pt}
	\setlength\boxbottomsep{1pt}
	\newcommand\norm[1]{%
		\setbox1\hbox{$#1$}%
		\setbox2\hbox{\addvbuffer{\usebox1}}%
		\stretchrel{\lvert}{\usebox2}\stretchrel*{\lvert}{\usebox2}%
	}
	\allowdisplaybreaks
	\newcommand*\mcup{\mathbin{\mathpalette\mcapinn\relax}}
	\newcommand*\mcapinn[2]{\vcenter{\hbox{$\mathsurround=0pt
				\ifx\displaystyle#1\textstyle\else#1\fi\bigcup$}}}
	\def\Xint#1{\mathchoice
		{\XXint\displaystyle\textstyle{#1}}%
		{\XXint\textstyle\scriptstyle{#1}}%
		{\XXint\scriptstyle\scriptscriptstyle{#1}}%
		{\XXint\scriptscriptstyle\scriptscriptstyle{#1}}%
		\!\int}
	\def\XXint#1#2#3{{\setbox0=\hbox{$#1{#2#3}{\int}$ }
			\vcenter{\hbox{$#2#3$ }}\kern-.58\wd0}}
	\def\ddashint{\Xint=}
	\newcommand{\dashint}[1]{\ensuremath{{\Xint-}_{\mkern-10mu #1}}}
	\newcommand\ccancel[1]{\renewcommand\CancelColor{\color{red}}\cancel{#1}}
	\newcommand\colorcancel[2]{\renewcommand\CancelColor{\color{#2}}\cancel{#1}}
	\newcommand{\abs}[1]{\left\lvert #1 \right \rvert}
	
	\renewcommand{\thetheorem}{\thesection.\arabic{theorem}}
    	
    	\section{Introduction}\label{intro}
    	
    	\subsection{Background on Weil-Petersson quasicircles}
    	
    	In \cite{yilingem,yilinloop}, the second author and Steffen Rohde introduced the M\"obius-invariant Loewner energy  
    	to measure the roundness of Jordan curves on the Riemann sphere $\C \cup \{\infty\}$ using the Loewner transform~\cite{Loewner23}. 
    	The original motivation comes from the probabilistic theory of Schramm-Loewner evolutions, see, e.g., \cite{yilin_survey} for an overview.
     It was proved in \cite{yilinvention} that the Loewner energy is proportional to the \emph{universal Liouville action} introduced by Takhtajan and Teo \cite{takteo}. In particular, the class of {finite energy curves} corresponds exactly to the  \emph{Weil-Petersson} class of quasicircles which has already been studied extensively by both physicists and mathematicians since the eighties, see, \emph{e.g.}, \cite{BowickRajeev1987string,witten88,NagVerjovsky,Nag_Sullivan,STZ_KdV,cui,takteo,sharon20062d,Figalli_circle,shenweil,GGPPR,bishop3,yilinterplay,VW2,johansson2021strong}, 
and is still an active research area.  See the introduction of \cite{bishop3} (see also
the companion papers \cite{bishop_ft} and \cite{bishop_travel} for more on this topic) for a summary and a list of 
equivalent definitions of very different nature. 

    	In this article, we sometimes view Jordan curves as curves on $S^2 \subset \R^3$ and give new characterisations of the Loewner energy in terms of the moving frames on $S^2$. 
    	Note that in this article, $S^2$ refers to the sphere of radius $1$ centred at the origin in $\R^3$ equipped with the induced round metric $g_0$ from its embedding into $\R^3$. Therefore, $S^2$ is isometric to $\widehat{\C}=\C\cup\ens{\infty}$ endowed with the metric
    	\begin{align*}
    	g_{\widehat{\C}}=\frac{4|dz|^2}{(1+|z|^2)^2}
    	\end{align*}
    	by the stereographic projection.
    	To distinguish the two setups, we will let $\gamma$ denote a Jordan curve in 
    	$\widehat{\C}$ and let $\Gamma$ denote a Jordan curve in $S^2$.  Let us 
      first list a few equivalent definitions of Weil-Petersson quasicircles that are 
     relevant to this work.

    	\begin{theorem}[Cui, \cite{cui}, Tahktajan-Teo, \cite{takteo}, Shen, \cite{shenweil}, Bishop, \cite{bishop3}]\label{thm:equi_WP}
    	Let $\gamma \subset \mathbb C$ be a Jordan curve, $\Omega$ be the bounded connected component of $\C\setminus\gamma$, and let $f:\mathbb{D}\rightarrow \Omega$ and $g:\C\setminus\bar{\mathbb{D}}\rightarrow \C\setminus\bar{\Omega}$ be biholomorphic maps such that $g (\infty) = \infty$.  The following conditions are equivalent:
    	\begin{enumerate}[topsep=-.1em,itemsep=-.3em]
    	\item[\emph{(1)}] There exists a quasiconformal extension of $g$ to $\C$ such that the Beltrami coefficient $\mu = \dfrac{\partial_{\ad z} g}{\partial_z g} :\mathbb D \rightarrow \mathbb D$ of $g|_{\mathbb D}$ satisfies $\displaystyle\int_{\mathbb{D}}|\mu(z)|^2\frac{|dz|^2}{(1-|z|^2)^2}<\infty$.
    	\item[\emph{(2)}] $\displaystyle\int_{\mathbb{D}}|\D\log|f'(z)||^2|dz|^2=\int_{\mathbb{D}}\left|\frac{f''(z)}{f'(z)}\right|^2|dz|^2<\infty$.
    	 \item[\emph{(3)}] $\displaystyle \int_{\C\setminus \bar{\mathbb{D}}}\left|\frac{g''(z)}{g'(z)}\right|^2|dz|^2<\infty$.
    	\item[\emph{(4)}] The \emph{(conformal) welding function} $\varphi=g^{-1}\circ f|_{S^1}$ satisfies $\log \varphi'$ belongs to the Sobolev space $H^{1/2}(S^1)$.
        \item[\emph{(5)}] The curve $\gamma$ is chord-arc and the unit tangent $\tau:\gamma\rightarrow S^1$ belongs to $H^{1/2}(\gamma)$.
        \item[\emph{(6)}] Every minimal surface $\Sigma\subset\mathbb{H}^3 \simeq \mathbb C \times \mathbb R_{+}^{\ast}$ with asymptotic boundary $\gamma$
    				has finite renormalised area, \emph{i.e.},
    				\begin{align}\label{renormalisedarea}
    				{\mathcal{RA}(\Sigma) = \lim\limits_{\varepsilon \rightarrow 0}\left(\mathrm{Area}(\Sigma_\varepsilon)-\mathrm{Length}(\partial \Sigma_\varepsilon)\right) =-2\pi\chi(\Sigma)-\int_{\Sigma}|\mathring{A}|^2d\mathrm{vol}_{\Sigma}>-\infty, }
    				\end{align}
    				where	for all {$\varepsilon>0$,
		$	\Sigma_\varepsilon=\Sigma\cap\ens{(z,t): t>\varepsilon}$ and $\partial \Sigma_\varepsilon=\Sigma\cap\ens{(z,t): t=\varepsilon}$.}
    	\end{enumerate}
    If $\gamma$ satisfies any of those conditions, $\gamma$ is called a \emph{Weil-Petersson quasicircle}.
    	\end{theorem}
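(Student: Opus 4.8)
The plan is to prove the six conditions pairwise equivalent by assembling four blocks of a rather different analytic nature, organised around the pre-Schwarzian conditions (2) and (3). \emph{Conformal core: (1) $\Leftrightarrow$ (2) $\Leftrightarrow$ (3).} I would first establish (1) $\Leftrightarrow$ (3), which is the defining property of the Weil--Petersson universal Teichm\"uller space in the work of Cui and Takhtajan--Teo. For (1) $\Rightarrow$ (3): starting from a quasiconformal extension of $g$ whose Beltrami coefficient $\mu$ lies in $L^{2}(\mathbb{D},(1-|z|^{2})^{-2}|dz|^{2})$, one recovers $\log g'$ on $\C\setminus\bar{\mathbb{D}}$ by solving the Beltrami equation and bounds $\|g''/g'\|_{L^{2}(\C\setminus\bar{\mathbb{D}})}$ through a Cauchy-transform estimate of Calder\'on--Zygmund type against the weighted $L^{2}$ norm of $\mu$. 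For (3) $\Rightarrow$ (1): the Ahlfors--Weill reflection produces an explicit quasiconformal extension of $g$ across $S^{1}$ whose Beltrami coefficient is a universal multiple of $(1-|z|^{2})^{2}$ times the Schwarzian $Sg$ evaluated at the reflected point, and the weighted $L^{2}$ norm of this coefficient is controlled by $\|g''/g'\|_{L^{2}}^{2}$ after an integration by parts in which a one-dimensional Hardy inequality absorbs the boundary term. Finally (2) $\Leftrightarrow$ (3): condition (3) is a vanishing condition at $S^{1}$, so $\gamma$ is asymptotically conformal, and the interior map $f$ then also admits a quasiconformal extension to which the same argument applies with $\mathbb{D}$ and $\C\setminus\bar{\mathbb{D}}$ exchanged; equivalently one invokes the Takhtajan--Teo identity, which expresses the universal Liouville action symmetrically in $f$ and $g$.

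\emph{Welding and tangent: (3) $\Leftrightarrow$ (4) $\Leftrightarrow$ (5).} Since a curve satisfying (3) is asymptotically conformal, the welding $\varphi=g^{-1}\circ f|_{S^{1}}$ is a well-defined quasisymmetric circle homeomorphism. I would show $\log\varphi'\in H^{1/2}(S^{1})$ is equivalent to the Grunsky operator of $\gamma$ being Hilbert--Schmidt, which Takhtajan--Teo identified with (3); alternatively one computes the $H^{1/2}$ Dirichlet seminorm of $\log\varphi'$ and recognises it, modulo a term that vanishes on asymptotically conformal curves, as the sum of the two Dirichlet integrals appearing in (2) and (3). For (4) $\Leftrightarrow$ (5), which is Shen's theorem: once a Weil--Petersson quasicircle is known to be chord-arc, its arc-length parametrisation is a bi-Lipschitz reparametrisation of the conformal boundary parametrisation, and as $H^{1/2}$ is stable under bi-Lipschitz changes of variable, $\tau\in H^{1/2}(\gamma)$ amounts to $\arg f'|_{S^{1}}\in H^{1/2}(S^{1})$, hence to $\log f'|_{S^{1}}\in H^{1/2}(S^{1})$, hence --- by the Douglas formula for the $H^{1/2}$ seminorm --- to $\int_{\mathbb{D}}|f''/f'|^{2}<\infty$, which is (2).

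\emph{Minimal surfaces: (2) $\Leftrightarrow$ (6).} This is Bishop's characterisation via renormalised area. The displayed identity $\mathcal{RA}(\Sigma)=-2\pi\chi(\Sigma)-\int_{\Sigma}|\mathring A|^{2}\,d\mathrm{vol}_{\Sigma}$ is a Gauss--Bonnet computation combined with the $\varepsilon$-truncation, valid for \emph{any} minimal $\Sigma\subset\mathbb{H}^{3}$ with $\partial_{\infty}\Sigma=\gamma$, so finiteness of $\mathcal{RA}(\Sigma)$ is exactly finiteness of the total trace-free second fundamental form; one then analyses the Fefferman--Graham-type expansion of $\Sigma$ near its ideal boundary to match $\int_{\Sigma}|\mathring A|^{2}$ with $\int_{\mathbb{D}}|f''/f'|^{2}$ up to a bounded correction involving the Schwarzian of $\gamma$.

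\emph{Main obstacle.} The genuinely hard step is (2) $\Leftrightarrow$ (6): controlling a minimal surface uniformly up to its ideal boundary and extracting precisely the pre-Schwarzian $L^{2}$-energy from $\int_{\Sigma}|\mathring A|^{2}$ requires the boundary asymptotics of the minimal surface equation in $\mathbb{H}^{3}$ together with a renormalisation argument. A secondary difficulty, needed before (5) even makes sense, is to establish a priori that Weil--Petersson quasicircles are chord-arc and that their conformal welding is well posed, so that $H^{1/2}(\gamma)$ can be defined through the arc-length parametrisation.
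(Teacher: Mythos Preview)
The paper does not contain its own proof of this theorem. Theorem~\ref{thm:equi_WP} is stated in the background subsection as a compilation of results from the literature, with attributions given in the text immediately following it: (1)$\Leftrightarrow$(2) is credited to Cui; (1)$\Leftrightarrow$(2)$\Leftrightarrow$(3) independently to Takhtajan--Teo; (1)$\Leftrightarrow$(4) to Shen; (1)$\Leftrightarrow$(5) to Shen--Wu and to Bishop; and (1)$\Leftrightarrow$(6) to Bishop. No argument is given or sketched in the paper itself; the theorem serves purely as context for the paper's actual results (Theorems~A and~B on the moving-frame characterisation). So there is nothing in the paper to compare your proposal against.

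Your outline is a reasonable high-level roadmap through the cited references, and the organisation around the pre-Schwarzian conditions (2) and (3) is sensible. A few of the steps are more heuristic than rigorous as written: the Ahlfors--Weill extension only applies directly when the Schwarzian satisfies the Nehari bound, so for (3)$\Rightarrow$(1) one needs either a localisation argument or the more robust Douady--Earle/Becker-type extensions used by Cui and Takhtajan--Teo; and your sketch of (2)$\Leftrightarrow$(6) via ``matching $\int_{\Sigma}|\mathring A|^{2}$ with $\int_{\mathbb{D}}|f''/f'|^{2}$ up to a bounded correction'' understates the work in Bishop's paper, where the link passes through several intermediate characterisations (dyadic martingales, beta-numbers, Menger curvature) rather than a direct Fefferman--Graham expansion. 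But since the paper itself defers all of this to the references, these are remarks on your reading of the literature rather than a comparison with the paper.
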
   	
    	The equivalences $(1)$ and $(2)$ are due to Cui, and independently to Takhtajan and Teo who proved the equivalences $(1),(2),(3)$. 
    	 In $(4)$, the continuous extension of $f, g$ to $S^1$ is well-defined by a classical theorem of Carathéodory \cite{caratheodory}. 
    	The equivalence between $(1)$ and $(4)$ 
     is proved by Shen.
    The second condition is perhaps the simplest one since it corresponds to the condition $\log|f'|\in W^{1,2}(\mathbb{D})$,  the Sobolev space of functions with squared-integrable weak derivatives.

For $(5)$, we recall that a Jordan curve is \emph{chord-arc} if there exists $K<\infty$ such that for all $x,y\in  \gamma$, we have $\ell(x,y)\leq K|x-y|$, where $\ell(x,y)$ is the length of the shortest arc joining $x$ to $y$. We mention that Weil-Petersson quasicircles are not only chord-arc but even \emph{asymptotically smooth}, namely, the ratio $\ell(x,y)/|x-y|$ tends to $1$ as $x$ tends to $y$. 
These curves are not necessarily $C^1$ for they allow certain types of infinite spirals. See Section~\ref{geodesicprop} for an explicit construction of such spirals.
    	    	Recall that for any Jordan chord-arc curve $\gamma$, a function $u:\gamma\rightarrow \C$ belongs to the Sobolev space $H^{1/2}(\gamma)$ if and only if 
    	\begin{align}\label{h12norm}
    	    \int_{\gamma}\int_{\gamma}\left|\frac{u(z)-u(w)}{z-w}\right|^2|dz||dw|<\infty,
    	\end{align}
    	where $|dz|$ is the arc-length measure.

    		The equivalence between ($1$) and  ($5$) was proven by Y. Shen and L. Wu (\cite{shenweilIII}; see also \cite{shenflow,shenweil,shenweilII,shenweilIIcor}), and also by Christopher Bishop \cite{bishop3}.
    		The last characterisation ($6$) due to Bishop \cite{bishop3} using the notion of renormalised area was first investigated for Willmore surfaces by S. Alexakis and R. Mazzeo (\cite{alexakismazzeo1}, \cite{alexakismazzeo2}) which has strong motivations arising from string theory \cite{GW99}.
    			The integral of the squared trace-free second fundamental form $\mathring A$ in $(6)$ is {
    			 {\em the Willmore energy of $\Sigma$} which is of particular interest for being conformally invariant}. 
    Amongst the important previous contribution that inspired this work, we should mention Epstein's work (\cite{epstein1}, \cite{epstein2}).

Not only we can characterize this class of curves qualitatively, as listed above, there is an important quantity associated with each element of the class.
Indeed, after appropriate normalisation, the class of Weil-Petersson quasicircles can be identified with the Weil-Petersson universal Teichm\"uller space $T_0(1)$ via conformal welding. Takhtajan and Teo \cite{takteo} showed that $T_0(1)$ carries an essentially unique homogeneous K\"ahler metric and introduced the \emph{universal Liouville action} $S_1$.  They showed that $S_1$ is a Kähler potential on $T_0(1)$ which is of critical importance for the Kähler geometry. 
    	We take an analytic instead of a Teichmüller theoretic viewpoint, so we will consider $S_1$ as defined for Weil-Petersson quasicircles instead of their welding functions. 
    	Explicitly, for a Weil-Petersson quasicircle $\gamma$, 
    	\begin{align}\label{defs1}
    	S_1 (\gamma)  =\int_{\mathbb{D}}\left|\frac{f''(z)}{f'(z)}\right|^2|dz|^2+\int_{\C\setminus\bar{\mathbb{D}}}\left|\frac{g''(z)}{g'(z)}\right|^2|dz|^2+4\pi\log|f'(0)|-4\pi\log|g'(\infty)|.
    	\end{align}

    	\begin{theorem}[Y. Wang, \cite{yilinvention}]\label{loewner_weil}
    	A Jordan curve $\gamma$ has finite Loewner energy $I^L(\gamma)$ if and only if $\gamma$ is a Weil-Petersson quasicircle. Furthermore, we have 
    	\begin{align}\label{universal}
    	     I^L(\gamma)=\frac{1}{\pi}S_1(\gamma). 
    	\end{align}
    	\end{theorem}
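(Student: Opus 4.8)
The plan is to reduce the loop identity to a \emph{chordal} one, establish the chordal identity by differentiating a Dirichlet-type functional along the Loewner flow, and then recover the general statement by approximation and lower semicontinuity. First I would use the results of Rohde and the second author \cite{yilingem,yilinloop}: the loop energy $I^L(\gamma)$ is M\"obius invariant and, after rooting $\gamma$ at two points and applying a M\"obius map sending them to $0$ and $\infty$, decomposes into chordal Loewner energies of the resulting arcs, the chordal energy of a chord $\eta$ from $0$ to $\infty$ in the upper half-plane $\mathbb H$ being $I_{\mathbb H,0,\infty}(\eta)=\tfrac12\int_0^\infty\dot W_t^2\,dt$, with $(W_t)_{t\geq0}$ the Loewner driving function. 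In parallel I would introduce the chordal action
\[
\mathcal S_{\mathbb H,0,\infty}(\eta)=\int_{\mathbb H}\bigl|\nabla\log|f'(z)|\bigr|^2|dz|^2+\int_{\mathbb H}\bigl|\nabla\log|h'(z)|\bigr|^2|dz|^2,
\]
with $f,h$ the uniformising maps of the two components of $\mathbb H\setminus\eta$, and check that $S_1$ from \eqref{defs1} decomposes the same way under rooting, the logarithmic terms in \eqref{defs1} accounting exactly for the change of normalisation between the disc picture and the half-plane picture. The theorem then reduces to the single chordal identity $I_{\mathbb H,0,\infty}(\eta)=\tfrac1\pi\mathcal S_{\mathbb H,0,\infty}(\eta)$ for every chord $\eta$.

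\textbf{The flow identity.} For the chordal identity I would work first with a smooth chord. Let $(g_t)$ be its Loewner maps, $W_t=g_t(\eta(t))$, and $\eta^t:=g_t(\eta[t,\infty))-W_t$ the recentred remaining chord, so that the chordal energy trivially splits as $I_{\mathbb H,0,\infty}(\eta)=\tfrac12\int_0^T\dot W_t^2\,dt+I_{\mathbb H,0,\infty}(\eta^T)$. The crux is to show that $\mathcal S_{\mathbb H,0,\infty}$ obeys the \emph{same} splitting, equivalently $\tfrac{d}{dt}\mathcal S_{\mathbb H,0,\infty}(\eta^t)=-\tfrac\pi2\dot W_t^2$. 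To do this I would differentiate the two Dirichlet integrals in $t$ using the Loewner equation $\partial_t g_t(z)=2/(g_t(z)-W_t)$ to track the uniformising maps and their derivatives, rewrite each Dirichlet energy as a boundary integral via Green's formula, cancel the bulk contributions, and verify that the surviving term localises near the growing tip and equals $-\tfrac\pi2\dot W_t^2$ — the same local computation that underlies the very definition of the chordal energy. Integrating in $t\in[0,T]$ and letting $T\to\infty$, smoothness and finiteness of the energy should force $\eta^t$ to flatten (in the Carath\'eodory sense), so $\mathcal S_{\mathbb H,0,\infty}(\eta^t)\to0$, while the endpoint $t\to0$ returns $\mathcal S_{\mathbb H,0,\infty}(\eta)$; this proves the identity for smooth chords.

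\textbf{General curves and the converse.} To reach a general Weil-Petersson (equivalently, finite-energy) chord I would approximate by smooth chords $\eta_n\to\eta$ for which the driving functions and the uniformising maps converge simultaneously, so that $I_{\mathbb H,0,\infty}(\eta_n)\to I_{\mathbb H,0,\infty}(\eta)$ and $\mathcal S_{\mathbb H,0,\infty}(\eta_n)\to\mathcal S_{\mathbb H,0,\infty}(\eta)$; the $H^{1/2}$ regularity of the welding, i.e.\ conditions $(2)$--$(4)$ of Theorem~\ref{thm:equi_WP}, should provide the integrability needed to justify either this limit or the differentiation above directly. For the implication ``$I^L(\gamma)<\infty\Rightarrow\gamma$ Weil-Petersson'', I would use that finite Loewner energy already forces $\gamma$ to be a quasicircle (Rohde--Wang), approximate $\gamma$ from inside by smooth Jordan curves, apply the identity to the approximants, and invoke lower semicontinuity of the Dirichlet energies to obtain $S_1(\gamma)\leq\pi\,I^L(\gamma)<\infty$; hence $\gamma$ is Weil-Petersson by Theorem~\ref{thm:equi_WP}, and the identity then upgrades to the equality \eqref{universal}.

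\textbf{Main obstacle.} I expect the hard part to be the rigour of the flow computation: justifying differentiation under the integral sign for the Dirichlet energies along the Loewner flow and controlling the boundary integrals up to the growing tip, where $f_t$ and $h_t$ are only as regular as $\eta$ itself. This seems to require either performing the computation cleanly for smooth $\eta$ and transporting the equality through an approximation that simultaneously controls $\int\dot W^2$ and both Dirichlet energies, or extracting a priori bounds directly from the welding regularity. The ancillary points needing care are the bookkeeping of normalisation constants in the reduction step and the decay $\mathcal S_{\mathbb H,0,\infty}(\eta^t)\to0$ as $t\to\infty$.
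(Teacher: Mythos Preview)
The paper does not prove this statement at all: Theorem~\ref{loewner_weil} is quoted as a background result from \cite{yilinvention}, with no argument supplied, and the text immediately following it even invites the reader to take \eqref{universal} as the \emph{definition} of $I^L$. So there is no ``paper's own proof'' to compare your proposal against.

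That said, your outline is a faithful sketch of the strategy actually used in \cite{yilinvention}: reduce to the chordal setting via M\"obius invariance and root additivity, differentiate a Dirichlet-type functional along the Loewner flow to obtain $\tfrac{d}{dt}\mathcal S(\eta^t)=-\tfrac{\pi}{2}\dot W_t^2$, integrate, and close with approximation/lower semicontinuity. The obstacles you flag (differentiation under the integral sign near the tip, the decay $\mathcal S(\eta^t)\to 0$, and the bookkeeping of normalisation constants when passing between the disc and half-plane pictures) are exactly the technical points that require work in the original reference. If your assignment was to reconstruct the proof of the cited theorem, your plan is on target; if it was to compare with the present paper, be aware that the present paper simply imports the result.
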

    	    	We will therefore use interchangeably the terms \enquote{Jordan curve of finite Loewner energy,} \enquote{Weil-Petersson quasicircle,} or simply \enquote{Weil-Petersson curve.}
    	As we did not define explicitly the Loewner energy $I^L(\gamma)$, readers may consider \eqref{universal} as its definition. It may not be obvious from the expression of $S_1$ that it is invariant under M\"obius transformations, such as the inversion ${\ii}: z\mapsto 1/z$.
    	However, it would follow directly  from the 
    	definition using Loewner transform in \cite{yilinloop}.
     Provided that $\gamma$ separates $0$ from $\infty$, we may choose the biholomorphic functions $f$ and $g$ as in Theorem~\ref{thm:equi_WP} and assume further that $f(0)=0$. Applying the invariance of the Loewner energy under ${\ii}$, we get
    	\begin{align}\label{s2}
    	     &I^L(\gamma)= I^L({\ii}(\gamma))\\
    	     &=\frac{1}{\pi}\int_{\mathbb{D}}\left|\frac{f''(z)}{f'(z)}-2\frac{f'(z)}{f(z)}+\frac{2}{z}\right|^2|dz|^2+\frac{1}{\pi}\int_{\mathbb{C}\setminus\bar{\mathbb{D}}}\left|\frac{g''(z)}{g'(z)}-2\frac{g'(z)}{g(z)}+\frac{2}{z}\right|^2|dz|^2+4\log|f'(0)|-4\log|g'(\infty)|\nonumber.
    	\end{align}

    		\subsection{Moving Frames and the Ginzburg-Landau Equations}

        Moving frames, first introduced by Darboux in the late $19$th century to study curves and surfaces, were later generalised by Élie Cartan and permit one to reformulate astutely a wide class of differential-geometric problems. One of the rather recent such use of this theory is found in the work of Frédéric Hélein on harmonic maps (\cite{helein}), where the moving frame pave the way towards new regularity results. 
    	
        In \cite{lauromain}, Paul Laurain and Romain Petrides suggest a new approach to relate 
        the Loewner energy  
        to the renormalised energy of moving frames using the Ginzburg-Landau energy in a minimal regularity setting (which is of independent interest). Although the Ginzburg-Landau is normally used to construct harmonic maps with values into $S^1$ under topological constraints where no smooth solutions exist (\cite{BBH}), it should be seen—although we will not use this functional here—more generally as a way to construct (singular) moving frames on surfaces. Through this approach, one may hope to link quantatively the Loewner energy and the Willmore energy that can also be written in terms of moving frames (\cite{framemondinoriviere}). 
        
        Let $\Omega\subset \C$ be a simply connected domain, and $\gamma=\partial \Omega$. In \cite{lauromain}, they show that the Bethuel-Brezis-Hélein (\cite{BBH}) analysis carries on for general chorc-arc curves and $H^{1/2}$ boundary data. Using this delicate analysis, they obtain the following result, which is the most relevant one in this article.

          \begin{theorem}[Laurain-Petrides, \cite{lauromain}, Theorem $0.2$, Theorem $0.3$]\label{gl1}
        Let $\Omega\subset \C$ be a bounded simply connected domain such that $\gamma=\partial \Omega$ is a Weil-Petersson quasicircle. Then, there exists a harmonic map $\e:\Omega\setminus\ens{p}\rightarrow S^1$ with boundary data $\tau:\Gamma\rightarrow S^1$ which is the unit tangent vector of $\partial\Omega=\Gamma$. {Let}  $\f=-i\,\e$ and $\omega=\s{\e}{d\f\,}$, then there exists a harmonic function $\mu:\Omega\rightarrow\R$ such that $\omega=\ast\,d\left(G_{\Omega}+\mu\right)$, and a conformal map $f:\mathbb{D}\rightarrow \Omega$ such {that $f(0) = p$, and }
        \begin{align}\label{uniform}
        \left\{\begin{alignedat}{1}
        \frac{1}{r}\p{\theta}f&=e^{\mu\circ f}\e\circ f\\
        \p{r}f&=e^{\mu\circ f}\f\circ f.
        \end{alignedat}\right.
        \end{align}
        Furthermore, we have
        \begin{align}\label{uniform2}
        \int_{\Omega}|\omega-\ast\,dG_{\Omega}|^2dx=\int_{\Omega}|\D\mu|^2dx=\int_{\mathbb{D}}\left|\frac{f''(z)}{f'(z)}\right|^2|dz|^2,
        \end{align}
        where $G_{\Omega}$ is a Green's function with Dirichlet boundary condition on $\partial\Omega$.
        \end{theorem}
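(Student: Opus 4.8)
The plan is to sidestep the Ginzburg--Landau minimisation of \cite{lauromain} and instead produce $\e$, $\f$, $\mu$ and $f$ \emph{explicitly} from a Riemann map, then verify each assertion; the result of \cite{lauromain} may be read as showing that this ansatz is precisely the one selected by the variational problem. First I would take a conformal map $f\colon\mathbb{D}\to\Omega$ with $f(0)=p$ (it exists by the Riemann mapping theorem and extends to a homeomorphism $\bar{\mathbb{D}}\to\bar{\Omega}$ since $\gamma$ is a Jordan curve). Taking moduli in \eqref{uniform} and using $|\e|=|\f|=1$ forces $e^{\mu\circ f}=|f'|$, so the only possible choice (together with $\f:=-i\,\e$) is
\[
\mu:=\log|f'\circ f^{-1}|,\qquad \e\circ f(z):=\frac{iz}{|z|}\,\frac{f'(z)}{|f'(z)|},
\]
with $z=re^{i\theta}$. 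Since $f'$ is holomorphic and non-vanishing on $\mathbb{D}$, $\log|f'|$ is harmonic there, and as $f^{-1}$ is conformal, $\mu$ is harmonic on all of $\Omega$. Writing $\e\circ f=\exp\!\bigl(i(\tfrac{\pi}{2}+\arg z+\Im\log f'(z))\bigr)$ exhibits $\e$ locally as the exponential of a harmonic function---$\arg z$ being harmonic and multivalued on $\mathbb{D}\setminus\{0\}$, and $\Im\log f'$ harmonic on $\mathbb{D}$---so $\e\colon\Omega\setminus\{p\}\to S^1$ is a harmonic map with a single singularity, of degree $1$, at $p$. Such a singularity is unavoidable: the unit tangent of a Jordan curve has winding number $1$ (Hopf's Umlaufsatz), so no continuous $S^1$-valued extension to all of $\Omega$ can exist.

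Next I would dispatch the remaining identities, each a short computation once $f,\mu,\e$ are in hand. Restricting to $r=1$ gives $\e\circ f(e^{i\theta})=ie^{i\theta}f'(e^{i\theta})/|f'(e^{i\theta})|=\partial_{\theta}(f(e^{i\theta}))/|\partial_{\theta}(f(e^{i\theta}))|$, which is the unit tangent $\tau$ of $\Gamma$, so the boundary data is as claimed. The two lines of \eqref{uniform} follow from $\partial_{\theta}f=izf'(z)$, $\partial_{r}f=e^{i\theta}f'(z)$ and the definitions (the second line being equivalent to the first via $\f=-i\,\e$). For $\omega=\ast\,d(G_\Omega+\mu)$, write $\e=e^{i\psi}$ with $\psi=\tfrac{\pi}{2}+\arg z+\Im\log f'(z)$; then $d\f=e^{i\psi}\,d\psi$, so $\omega=\s{\e}{d\f}=d\psi=d\arg(f^{-1})+d\,\Im\log(f'\circ f^{-1})$, where the first term is $\ast\,dG_\Omega$ ($G_\Omega=\log|f^{-1}|$ being the Green's function with pole $p$, whose harmonic conjugate is $\arg(f^{-1})$) and the second is $\ast\,d\mu$ (since $\mu=\Re\log(f'\circ f^{-1})$). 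Finally \eqref{uniform2} is immediate: $\omega-\ast\,dG_\Omega=\ast\,d\mu$, the Hodge star is a pointwise isometry on $1$-forms, and the Dirichlet energy is conformally invariant in two dimensions, whence
\[
\int_{\Omega}|\omega-\ast\,dG_\Omega|^2\,dx=\int_{\Omega}|\D\mu|^2\,dx=\int_{\mathbb{D}}|\D\log|f'(z)||^2\,|dz|^2=\int_{\mathbb{D}}\left|\frac{f''(z)}{f'(z)}\right|^2\,|dz|^2,
\]
the last equality being $|\D\Re F|^2=|F'|^2$ for the holomorphic $F=\log f'$.

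The hard part is \emph{not} this algebra, which never invokes the Weil--Petersson hypothesis, but the boundary regularity needed to make it rigorous---and this is where the hypothesis enters. I would isolate, as a lemma about a Riemann map $f$ onto a Weil--Petersson quasicircle, that: (i) the nontangential boundary values of $f'$, equivalently of $\arg f'$, are controlled well enough that the trace of the constructed $\e$ exists and equals $\tau$ in $H^{1/2}(\Gamma)$; and (ii) $\int_{\Omega\setminus B_\delta(p)}|\D\e|^2<\infty$ for every $\delta>0$. By Theorem~\ref{thm:equi_WP} this reduces to $\log|f'|\in W^{1,2}(\mathbb{D})$ (characterisation (2), which also supplies the finiteness of the right-hand side of \eqref{uniform2}) together with $\Gamma$ chord-arc and $\tau\in H^{1/2}(\Gamma)$ (characterisation (5)). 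Establishing (i)--(ii) cleanly is precisely where the delicate Bethuel--Brezis--H\'elein-type analysis of \cite{lauromain} on chord-arc domains with $H^{1/2}$ data does its work, and I expect it to be the main obstacle. With that in hand, and although only existence is asserted, one checks that $\e$ is the unique harmonic map $\Omega\setminus\{p\}\to S^1$ with boundary trace $\tau$ and a clean degree-$1$ singularity at $p$, by a maximum-principle argument: two such have phases differing by a function harmonic across $p$ that takes a constant value on $\partial\Omega$ (as the traces agree), hence constant on $\Omega$, so the maps coincide.
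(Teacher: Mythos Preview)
The paper does not supply its own proof of this statement---it is quoted from \cite{lauromain} as Theorems~0.2 and~0.3 there. However, your direct construction from a Riemann map is precisely the strategy the paper itself adopts for the spherical analogue in Theorem~\ref{key2}, as flagged explicitly in Remark~\ref{rem:existence_frame}: rather than run a Ginzburg--Landau minimisation, start from a conformal map and \emph{define} the frame by \eqref{moving_frames}. Your computations (the formula $\mu=\log|f'|\circ f^{-1}$, the decomposition $\omega=\ast\,d(G_\Omega+\mu)$ via harmonic conjugates, and the Dirichlet-energy identity) are correct and are the planar transcriptions of Steps~1--4 in the proof of Theorem~\ref{key2}.

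Two remarks. First, in \cite{lauromain} the point $p$ is special---it arises from the variational problem and maximises $|f'(0)|$ among biholomorphisms $\mathbb{D}\to\Omega$ (see the remark following the statement)---whereas your construction, like the paper's in Theorem~\ref{key2}, works for arbitrary $p\in\Omega$; this is a genuine gain of the direct route. Second, on the ``hard part'': the Bethuel--Brezis--H\'elein machinery is what \cite{lauromain} needs for their variational approach, but for the direct construction the regularity you isolate is softer than you suggest. Characterisation~(2) of Theorem~\ref{thm:equi_WP} gives $\log|f'|\in W^{1,2}(\mathbb{D})$, hence its harmonic conjugate $\arg f'\in W^{1,2}(\mathbb{D})$ as well, so $\e\circ f\in W^{1,2}_{\mathrm{loc}}(\mathbb{D}\setminus\{0\})$ with an $H^{1/2}$ trace on $S^1$; pushing forward by the conformal $f$ and invoking the chord-arc trace theorems the paper cites then delivers~(i) and~(ii) without the full BBH analysis. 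Your uniqueness sketch is essentially Theorem~I.5 and Remark~I.1 of \cite{BBH}, to which the paper appeals at the end of Theorem~\ref{5.2}.
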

        The other main result of \cite{lauromain} is to identify the renormalised energy in the sense of Bethuel-Brezis-Hélein as an explicit term involving \eqref{uniform2}. 
      \begin{rem}
   {The harmonic function} $\mu$ is explicitly given by $\mu=\log|\p{r}f|\circ f^{-1}=\log\left|\frac{1}{r}\p{\theta}f\right|\circ f^{-1}=\log|f'|\circ f^{-1}$. The last identities follow
    from the conformality of $f$. {We note that in \cite{lauromain}, the point $p$ is a special point such that any biholomorphic map $f$ with $f(0) = p$ maximizes $|f'(0)|$ amongst all biholomorphic maps $\mathbb D \to \Omega$.}
        \end{rem}
    We see that the frame energy \eqref{uniform2} coincides with the first term in \eqref{defs1}.    To obtain the second half of the Loewner energy involving
        \begin{align}\label{g_non_compact}
        \int_{\C\setminus\bar{\mathbb{D}}}\left|\frac{g''(z)}{g'(z)}\right|^2|dz|^2,
        \end{align}
         we cannot easily use the Ginzburg-Landau equation to construct the moving frames since that would force us to work on the non-compact domain $\C\setminus\bar{\Omega}$. 
         Using the inversion ${\ii}$ will not suffice either. 
         If we choose the biholomorphic map $\tilde{g}:\mathbb{D}\rightarrow {\ii}(\C\setminus\bar{\Omega})$ so that $\tilde{g}={\ii}\circ g\circ {\ii}$, we have 
         \begin{align*}
         	\int_{\mathbb{D}}\left|\frac{\tilde{g}''(z)}{\tilde{g}'(z)}\right|^2|dz|^2=\int_{\mathbb{C}\setminus\bar{\mathbb{D}}}\left|\frac{g''(z)}{g'(z)}-2\frac{g'(z)}{g(z)}+\frac{2}{z}\right|^2|dz|^2
         \end{align*}
         which is in general different from \eqref{g_non_compact}. 
        To overcome this {technicality}, 
        we work directly on $S^2$ to obtain a formula of the Loewner energy in terms of moving frames.

         \subsection{Main Results}
         
         \setcounter{theorem}{0}
         
         \renewcommand*{\thetheorem}{\Alph{theorem}}

         \begin{theorem}\label{wp1}
         	 	Let $\Gamma\subset S^2 \subset \mathbb R^3$ be a Weil-Petersson quasicircle,  $\Omega_1,\Omega_2\subset S^2$ be the two disjoint open connected components of $S^2\setminus\Gamma$.
         	 	 Fix some $j=1,2$. Then, for {any} $p_j\in \Omega_j$, there exists harmonic moving frames $(\e_j,\f_j):\Omega_j\setminus\ens{p_j}\rightarrow U\Omega_j\times U\Omega_j$ such that the Cartan form $\omega_j=\s{\e_j}{d\f_j}$ admits the decomposition
         	 	\begin{align}\label{thA1}
         	 		\omega_j=\ast\,d\left(G_{\Omega_j}+\mu_j\right),
         	 	\end{align}
         	 	where $G_{\Omega_j}:\Omega_j\setminus\ens{p_j}\rightarrow \R$ is the Green's function of the Laplacian $\Delta_{g_0}$ on $\Omega_j$ with Dirichlet boundary condition, and $\mu_j\in C^{\infty}(\Omega_j)$ satisfies
         	 	\begin{align}\label{thA2}
         	 		\left\{\begin{alignedat}{2}
         	 			-\Delta_{g_0}\mu_j&=1\qquad&&\text{in}\;\,\Omega_j\\
         	 			\partial_{\nu}\mu_j&=k_{g_0}-\partial_{\nu}G_{\Omega_j}\qquad&&\text{on}\;\,\partial\Omega_j,
         	 		\end{alignedat} \right.
         	 	\end{align}
         	 	where $k_{g_0}$ is the geodesic curvature on $\Gamma=\partial\Omega_j$. Define the functional {$\mathscr{E}$} \emph{(}that we call the renormalised energy associated to the frames $(\e_1,\f_1)$ and $(\e_2,\f_2)$\emph{)}  by
         	 	\begin{align}\label{thA3}
         	 		\mathscr{E}(\Gamma)=\int_{\Omega_1}|d\mu_1|^2_{g_0}d\mathrm{vol}_{g_0}+\int_{\Omega_2}|d\mu_2|^2_{g_0}d\mathrm{vol}_{g_0}+2\int_{\Omega_1}G_{\Omega_1}K_{g_0}d\mathrm{vol}_{g_0}+2\int_{\Omega_2}G_{\Omega_2}K_{g_0}d\mathrm{vol}_{g_0}+4\pi.
         	 	\end{align}
         	 	Then there exists conformal maps $f_1:\mathbb{D}\rightarrow \Omega_1$ and $f_2:\mathbb{D}\rightarrow \Omega_2$ such that $f_1(0)=p_1$, $f_2(0)=p_2$ and 
         	 	\begin{align}\label{thA4}
         	 		I^L(\Gamma)=\frac{1}{\pi}\mathscr{E}(\Gamma)+4\log|\D f_1(0)|+4\log|\D f_2(0)|-12\log(2)=\frac{1}{\pi}\mathscr{E}_0(\Gamma).
         	 	\end{align}
         	 \end{theorem}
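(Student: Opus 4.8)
The plan is to reduce \eqref{thA4} to Wang's identity $I^L=\frac1\pi S_1$ of Theorem~\ref{loewner_weil} applied to a planar image of $\Gamma$. I would fix a stereographic projection $\pi_S\colon S^2\setminus\{N\}\to\widehat{\C}$ with $N=p_2$, set $\gamma:=\pi_S(\Gamma)$, so that $\pi_S(\Omega_1)=:\Omega$ is the bounded component of $\widehat{\C}\setminus\gamma$ and $\pi_S(\Omega_2\setminus\{p_2\})=\widehat{\C}\setminus\bar\Omega$; it then suffices to match $\mathscr E_0(\Gamma)$ with $S_1(\gamma)$. I would first build the frames. Since $\Gamma$ is a Weil--Petersson quasicircle so is $\gamma$ and, by \Mo-invariance of the class, so is its image under the inversion $\ii$; hence each $\Omega_j$ is a topological disk with Weil--Petersson boundary that is conformally equivalent to a \emph{bounded} planar Jordan domain with Weil--Petersson boundary (for $\Omega_2$ one uses $\ii$ to make it bounded). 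Running the Bethuel--Brezis--Hélein-type analysis of Laurain--Petrides (Theorem~\ref{gl1}) on such a planar model and transporting it back by the uniformising conformal map produces the harmonic unit tangent field $\e_j$ on $\Omega_j\setminus\{p_j\}$ with boundary value $\tau$; the only new feature coming from the background curvature is that, by Liouville's equation for the round metric (the log $\lambda$ of the conformal factor of $g_0$ satisfies $-\Delta\lambda=K_{g_0}e^{2\lambda}$), the flat condition $\Delta\mu=0$ becomes $-\Delta_{g_0}\mu_j=K_{g_0}=1$, while the structure equation $d\omega_j=-K_{g_0}\,\mathrm{vol}_{g_0}$ off $p_j$ together with the unit index of the vortex $\e_j$ at $p_j$ give $d\omega_j=2\pi\delta_{p_j}-K_{g_0}\,\mathrm{vol}_{g_0}$, hence $\omega_j=\ast\,d(G_{\Omega_j}+\mu_j)$ and, once $G_{\Omega_j}$ is normalised accordingly, the Neumann datum $\partial_\nu\mu_j=k_{g_0}-\partial_\nu G_{\Omega_j}$ by a short divergence-theorem compatibility check. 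This is \eqref{thA1}--\eqref{thA2}; alternatively one redoes the potential theory of \cite{lauromain} directly on the smooth compact background $(S^2,g_0)$.

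Next I would identify $\mu_j$ explicitly. Fix a conformal map $f_j\colon\mathbb{D}\to\Omega_j$ with $f_j(0)=p_j$ and write $h_j:=\pi_S\circ f_j$; thus $h_1\colon\mathbb{D}\to\Omega$ with $h_1(0)=\pi_S(p_1)$, while $h_2\colon\mathbb{D}\setminus\{0\}\to\widehat{\C}\setminus\bar\Omega$ with $h_2(0)=\infty$, and after a rotation of $\mathbb{D}$ we may take $h_2(z)=g(1/z)$ where $g\colon\C\setminus\bar{\mathbb{D}}\to\widehat{\C}\setminus\bar\Omega$ is conformal with $g(\infty)=\infty$. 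Comparing Laplacians through $f_j$, both $\mu_j\circ f_j$ and the logarithm $u_j$ of the conformal factor of $f_j^{\ast}g_0=e^{2u_j}|dz|^2$ solve $-\Delta u=e^{2u_j}$ on $\mathbb{D}$, so their difference is harmonic; pulling back the boundary condition of \eqref{thA2} to $\partial\mathbb{D}$, where $\partial_\nu G_{\Omega_j}$ becomes constant (up to the conformal factor) and the remaining terms collapse via the classical identity $e^{u_j}(k_{g_0}\circ f_j)=1+\partial_r u_j$, one finds this difference has vanishing Neumann trace, hence is constant, and the frame normalisation (as in the Remark after Theorem~\ref{gl1}) fixes $\mu_j\circ f_j=u_j=\log|h_j'|+\log\dfrac{2}{1+|h_j|^2}$, which also yields the analogue of \eqref{uniform} on $S^2$. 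Since the two-dimensional Dirichlet energy is a conformal invariant, $\int_{\Omega_j}|d\mu_j|^2_{g_0}\,\mathrm{vol}_{g_0}=\int_{\mathbb{D}}|\D(\mu_j\circ f_j)|^2\,|dz|^2$, which for $j=2$ I would further transform by $z\mapsto 1/z$ into an integral over $\C\setminus\bar{\mathbb{D}}$ whose integrand $\log|g'(w)|+2\log|w|+\log\frac{2}{1+|g(w)|^2}$ is \emph{bounded} near $\infty$ (the growths cancel). Evaluating the conformal factors at $0$ gives $|\D f_1(0)|=\dfrac{2|h_1'(0)|}{1+|h_1(0)|^2}$ and $|\D f_2(0)|=2/|g'(\infty)|$, so that $4\log|\D f_1(0)|+4\log|\D f_2(0)|-12\log 2$ turns into $4\log|h_1'(0)|-4\log|g'(\infty)|-4\log 2$ up to an explicit $\log(1+|h_1(0)|^2)$-correction.

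The heart of the proof is then to evaluate $\int_{\Omega_j}|d\mu_j|^2_{g_0}\,\mathrm{vol}_{g_0}+2\int_{\Omega_j}G_{\Omega_j}K_{g_0}\,\mathrm{vol}_{g_0}$, which by the above equals the Dirichlet energy over $\mathbb{D}$ (resp.\ $\C\setminus\bar{\mathbb{D}}$) of $\log|h_j'|+\log\frac{2}{1+|h_j|^2}$ (resp.\ its transform) plus $2\int_{\Omega_j}G_{\Omega_j}\,\mathrm{vol}_{g_0}$, as $K_{g_0}=1$. Expanding the square separates off the harmonic logarithmic-derivative part, whose Dirichlet energy is $\int_{\mathbb{D}}|h_1''/h_1'|^2$ (resp.\ $\int_{\C\setminus\bar{\mathbb{D}}}|g''/g'|^2$); the cross term and the square of the stereographic factor I would treat by integration by parts, using $\partial_\nu\mu_j=k_{g_0}-\partial_\nu G_{\Omega_j}$, the conformal invariance of $G_{\Omega_j}$, and Gauss--Bonnet $\int_{\partial\Omega_j}k_{g_0}\,ds_{g_0}+\int_{\Omega_j}K_{g_0}\,\mathrm{vol}_{g_0}=2\pi$. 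I expect this combination, together with the $4\pi$ of \eqref{thA3} and the conformal-factor conversions of the previous paragraph, to reassemble the \Mo-symmetric form \eqref{s2} of $S_1(\gamma)$, the curvature integrals $2\int_{\Omega_j}G_{\Omega_j}\,\mathrm{vol}_{g_0}$ supplying exactly the corrections (the $\log(1+|h_j(0)|^2)$ terms and the net $\log 2$) that make the identity hold for arbitrary base points $p_j$. Feeding the result into \eqref{s2} and Theorem~\ref{loewner_weil} then yields $I^L(\Gamma)=I^L(\gamma)=\frac1\pi S_1(\gamma)=\frac1\pi\mathscr E_0(\Gamma)$, which is \eqref{thA4}.

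The main obstacle is this last step, carried out on a genuinely rough boundary: a Weil--Petersson quasicircle need not be $C^1$ (it may spiral), so $k_{g_0}$, $\partial_\nu G_{\Omega_j}$ and $\partial_r u_j$ are not classical boundary functions, and the boundary-curvature identity, the integrations by parts, and the Gauss--Bonnet/turning-number argument have to be justified in the chord-arc, $H^{1/2}$ setting — which is precisely where the hypothesis on $\Gamma$ and the finiteness estimates of Theorem~\ref{gl1} (equivalently, conditions~(2), (3), (5) of Theorem~\ref{thm:equi_WP}) enter — while one simultaneously tracks every constant so as to land exactly on \eqref{s2} and produce the term $-12\log 2$.
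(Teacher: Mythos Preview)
Your strategy is coherent but differs from the paper in two structural ways, and the obstacle you flag at the end is precisely what the paper's route is designed to avoid.

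\textbf{Construction of the frames.} You propose to transport the Ginzburg--Landau/Laurain--Petrides analysis to $S^2$. The paper explicitly does \emph{not} do this: it takes any conformal map $f_j:\mathbb{D}\to\Omega_j$ with $f_j(0)=p_j$, and \emph{defines} the frame by $\partial_r f_j=e^{\mu_j\circ f_j}\f_j\circ f_j$, $\frac{1}{r}\partial_\theta f_j=e^{\mu_j\circ f_j}\e_j\circ f_j$ (Theorem~\ref{key2}). A direct calculation then checks that $(\e_j,\f_j)$ is harmonic, that $\omega_j=\ast\,d(G_{\Omega_j}+\mu_j)$, and that \eqref{thA2} holds distributionally. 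This works for any Weil--Petersson $\Gamma$ with no curved Ginzburg--Landau theory needed, and it makes the identity $\mu_j\circ f_j=\log|f'|-\log(1+|f|^2)+\log 2$ a definition rather than something to be extracted from a boundary-value problem. (Incidentally, your conformal-factor evaluations are off by $\sqrt{2}$: one has $|\nabla f_1|^2=8|h_1'|^2/(1+|h_1|^2)^2$, so $|\nabla f_1(0)|=2\sqrt{2}\,|h_1'(0)|/(1+|h_1(0)|^2)$ and $|\nabla f_2(0)|=2\sqrt{2}/|g'(\infty)|$.)

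\textbf{Proof of the identity \eqref{thA4}.} You aim to expand the Dirichlet integrals of $\mu_j$ and integrate by parts against $k_{g_0}$, $\partial_\nu G_{\Omega_j}$ on $\partial\Omega_j$, hoping to land on $S_1$ in the form \eqref{s2}. The paper avoids boundary integration on a rough $\Gamma$ altogether. It first proves \eqref{thA4} for \emph{smooth} $\Gamma$ by a completely different computation: starting from the zeta-determinant formula for $I^L$ (Theorem~\ref{yilindet}) and applying the Alvarez--Polyakov anomaly formula to the conformal change $g_0\to(f_j^{-1})^\ast g_{\mathbb{D}}$ on each $\Omega_j$. All the integrations by parts then take place on a smooth boundary, and the $k_{g_0}$ terms cancel cleanly between the Polyakov expansion and the Neumann condition \eqref{thA2}. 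For general Weil--Petersson $\Gamma$, the paper rewrites $\mathscr{E}_0(\Gamma)$ purely as bulk integrals over $\mathbb{D}$ and $\C\setminus\bar{\mathbb{D}}$ of explicit functions of $f,g$ (the functional $S_3$ of Definition~\ref{def:s3}); no boundary terms appear. Since $\pi I^L=S_3$ holds for smooth curves by the first step, and both sides are continuous along the approximation $f_n(z)=f((1-\epsilon_n)z)/(1-\epsilon_n)$ (this is where the Weil--Petersson hypothesis enters, via the $L^2$ convergence of $f_n''/f_n'$ and of $f_n'$), the identity passes to the limit.

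So your plan is not wrong in spirit, but the step you correctly identify as the main obstacle --- justifying boundary IBP and Gauss--Bonnet on a possibly spiralling curve --- is a genuine gap in your argument, and the paper's two-stage approach (zeta-determinants for smooth curves, then smooth approximation through the boundary-free functional $S_3$) is there precisely to close it.
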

         	 
         	 	 \begin{figure}[H]
         	 	\centering
         	 	\includegraphics[width=0.6\textwidth]{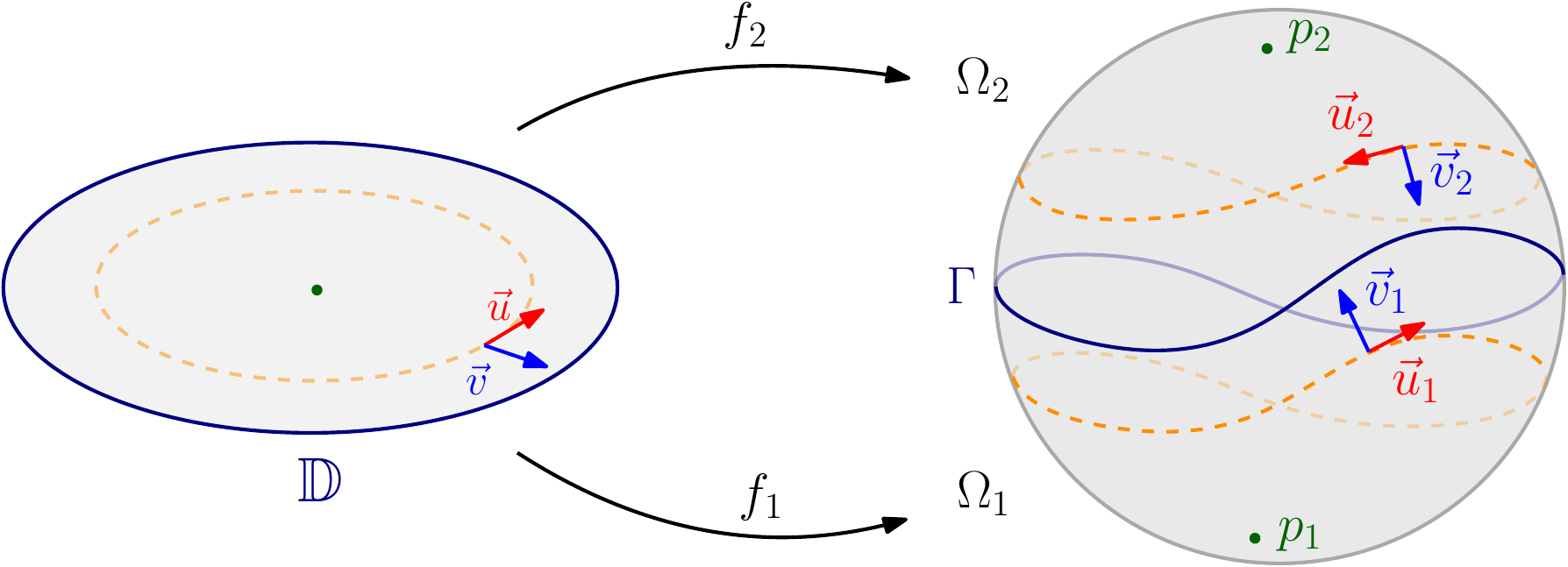}
         	
         	 	\caption{
         	 	Harmonic moving frames on the sphere associated to a Weil-Petersson quasicircle.
         	 	\label{fig:spherical} } 
         	 \end{figure}
      	 \setcounter{theorem}{9}
         	 \renewcommand{\thetheorem}{\thesection.\arabic{theorem}}
         	 \begin{rem}
         	 \begin{enumerate}
         	 \item[(1)]	 In the theorem above, we wrote $U\Omega_j$ ($j=1,2$) for the unit tangent bundle.  
         	 {T}he function $\mu_j${,} explicitly given by 
         	       \begin{align}\label{thA5}
         	        	 \mu_j=\frac{1}{2}\log\left(\frac{|\D f_j|^2}{2}\right)=\log|\D f_j|-\frac{1}{2}\log(2){,}
         	       \end{align}
         	 correspond to the conformal parameter of the conformal maps $f_1,f_2:\mathbb{D}\rightarrow S^2\subset \R^3$. 
         	 \item[(2)] The {constant term} 
         	 $4\pi$ in the definition of $\mathscr{E}$ is arranged so that $\mathscr{E}(S^1)=0$ (see Remark \ref{hemisphere}). Furthermore, the name renormalised energy is justified by the following identity
         	 \begin{align*}
         	 \mathscr{E}(\Gamma)=\int_{\Omega_1}\left(|d\e_1|^2_{g_0}+|d\f_1|_{g_0}^2-2|dG_{\Omega_1}|^2_{g_0}\right)d\mathrm{vol}_{g_0}+\int_{\Omega_2}\left(|d\e_2|^2_{g_0}+|d\f_2|_{g_0}^2-2|dG_{\Omega_2}|^2_{g_0}\right)d\mathrm{vol}_{g_0},
         	 \end{align*}
         	 where no constant term is involved. 
         	 \item[(3)] 
         	 The solution {\eqref{thA2}} to the Dirichlet problem is unique, and so {are} the moving frame{s} once the singularities $(p_1,p_2)\in \Omega_1\times \Omega_2$ are fixed. See Theorem \ref{key2} and \ref{5.2}. Notice that the geodesic curvature is understood in the distributional sense here (see Section \ref{geodesicprop} from the appendix for more details). 
         	 \end{enumerate}
         	 
         	 \end{rem} 
         \setcounter{theorem}{1}
                  \renewcommand*{\thetheorem}{\Alph{theorem}}
         This theorem corresponds to Theorem \ref{wp10} in the article {for smooth curves}
          and to Theorem~\ref{s3} {for general Weil-Petersson quasicircles}. 
         The general case follows essentially from the following result which can also be viewed as a restatement of Theorem~\ref{wp1} without any mention of moving frames. 
          \begin{theorem}[{See Theorem~\ref{s3}}]\label{s03}
                   	 	Let $\Gamma\subset S^2$ be 
                   	 	{a Weil-Petersson quasicircle and} $\Omega_1,\Omega_2\subset S^2\setminus\Gamma$ be the two connected components of $S^2\setminus\Gamma$. {F}or all conformal maps $f_1:\mathbb{D}\rightarrow \Omega_1$ and $f_2:\mathbb{D}\rightarrow \Omega_2$, we have
                   	 \begin{align}\label{conformula}
                   	 		I^L(\Gamma)&=\frac{1}{\pi}\sum_{j=1}^{2}\left(\int_{\mathbb{D}}|\nabla\log|\nabla f_j||^2|dz|^2+\int_{\mathbb{D}}\log|z||\nabla f_j|^2|dz|^2+\mathrm{Area}(\Omega_j)
                   	 		+4\pi\log|\nabla f_j(0)|\right)-12\log(2).
                   	 	\end{align}
                   	 \end{theorem}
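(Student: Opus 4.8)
\textbf{Proof plan for Theorem~\ref{s03}.}

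The plan is to derive \eqref{conformula} from the universal Liouville action formula \eqref{defs1}--\eqref{universal} combined with the Möbius-invariance identity \eqref{s2}, by carefully translating everything through the stereographic projection $\pi_{\mathrm{st}}:\widehat{\C}\to S^2$. First I would fix the convention: up to composing with a rotation of $S^2$ we may assume neither $\Omega_1$ nor $\Omega_2$ contains the image of $\infty$ in a degenerate way; more precisely, I would pick the planar picture $\gamma=\pi_{\mathrm{st}}^{-1}(\Gamma)\subset\widehat{\C}$ so that $\gamma$ separates $0$ from $\infty$ and take the planar uniformizing maps $f:\Disk\to\Omega$ (bounded component) and $g:\C\setminus\bar\Disk\to\C\setminus\bar\Omega$ with $f(0)=0$. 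Then \eqref{s2} expresses $\pi I^L(\gamma)=\pi I^L(\Gamma)$ as a sum of two ``inverted'' Dirichlet-type integrals over $\Disk$ and $\C\setminus\bar\Disk$ plus the logarithmic derivative terms. The key observation is that each of these inverted integrands is exactly the Schwarzian-free part of the change-of-variables formula for $\log|\nabla F_j|$ where $F_j:\Disk\to\Omega_j\subset S^2$ is the conformal map into the sphere, i.e. $F_j = \pi_{\mathrm{st}}\circ (\text{planar map})$ suitably post-composed by an inversion on the unbounded side.

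The heart of the computation is the pointwise identity: if $F:\Disk\to S^2$ is conformal with spherical conformal factor $e^{\mu}$, meaning $|\nabla F|^2 = 2e^{2\mu}$ (so $\mu$ is as in \eqref{thA5}), and if $h:\Disk\to\C$ is the corresponding planar map under stereographic projection with the pole placed appropriately, then
\begin{align*}
\mu = \log|h'| - \log(1+|h|^2) + \log 2 .
\end{align*}
Differentiating, $\nabla\mu = \nabla\log|h'| - \nabla\log(1+|h|^2)$, and squaring and integrating, the cross term produces (after integration by parts, using $\Delta\log(1+|h|^2) = 4|h'|^2/(1+|h|^2)^2 = $ pullback of the curvature form, i.e. $= \tfrac12 e^{2\mu}\cdot(\text{something})$ and the Gauss equation $-\Delta\mu = K e^{2\mu} = e^{2\mu}$ for the unit sphere) precisely the term $\int_{\Disk}\log|z|\,|\nabla F|^2|dz|^2$ together with $\mathrm{Area}(\Omega)$ and the boundary term $4\pi\log|\nabla F(0)|$. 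Here the factor $\log|z|$ enters because the planar map $h$ on the unbounded side carries a zero/pole structure relative to the origin: the Green's function $G_{\Disk}(z,0) = -\tfrac{1}{2\pi}\log|z|$ governs exactly how the conformal factor of the ``other'' component degenerates, which is why $\log|z|$, rather than a constant, appears. I would organize this as two lemmas: (a) the change-of-variables identity relating $\int_{\Disk}|\nabla\log|\nabla F_j||^2$ to the planar Liouville integrals of \eqref{s2}, and (b) the bookkeeping of the finitely many additive constants ($4\pi\log|\nabla f_j(0)|$, $\mathrm{Area}(\Omega_j)$, the $-12\log 2$), checking the total against the known value $I^L(S^1)=0$ for the equatorial circle as a normalization.

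The main obstacle is \textbf{the careful handling of the non-compactness and the pole of the stereographic projection} on the unbounded component: one must choose where to place the stereographic pole for each of $F_1, F_2$, track how the maps $f$ and $g$ (and the inversion $\ii$) interact, and make sure every integration by parts is legitimate despite $\mu_j$ blowing up logarithmically at $p_j$ and the curve $\Gamma$ being only Weil-Petersson-regular (chord-arc, $H^{1/2}$ tangent) rather than smooth. For the boundary integrations by parts I would first prove the identity for smooth $\Gamma$ — where $G_{\Omega_j}$, $\mu_j$, $k_{g_0}$ are all classical and Theorem~\ref{wp1}/Theorem~\ref{gl1} apply directly — and then pass to the general Weil-Petersson case by the approximation scheme already set up in the paper (approximating $\Gamma$ by smooth curves with convergence of Loewner energy and of all the relevant integrals), invoking the $H^{1/2}$-continuity of both sides. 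The subtlety in the smooth case is that the boundary term $\int_{\partial\Omega_j}\mu_j\,\partial_\nu G_{\Omega_j}$ must be matched with $4\pi\log|\nabla F_j(0)|$ using the normalization $\int_{\partial\Omega_j}\partial_\nu G_{\Omega_j} = -1$ and the asymptotics of $G_{\Omega_j}$ near $p_j$; getting the constant $-12\log 2 = -3\cdot 4\log 2$ right (three factors of $\log 2$ per hemisphere-type term, from the $\log(1+|h|^2)$ normalization appearing in $\mu_j$, cancelled partially across the two domains) is where sign and factor errors are most likely and deserves the most care.
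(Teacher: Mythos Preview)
Your plan diverges from the paper's and, more importantly, the ``key observation'' you rely on is not correct. You claim that the inverted integrands appearing in the M\"obius--invariance identity \eqref{s2} are exactly the integrands $|\nabla\log|\nabla F_j||^2$ for the spherical conformal maps. But with $F_1=\pi^{-1}\circ f$ one computes (as the paper does at \eqref{n_loewner4})
\[
|\nabla\log|\nabla F_1||^2=\left|\frac{f''}{f'}-2\,\frac{f'}{f}\cdot\frac{|f|^2}{1+|f|^2}\right|^2,
\]
whereas the integrand in \eqref{s2} is $\bigl|\tfrac{f''}{f'}-2\tfrac{f'}{f}+\tfrac{2}{z}\bigr|^2$. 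The spherical weight $|f|^2/(1+|f|^2)$ interpolates between the planar ($S_1$) and inverted--planar (\eqref{s2}) cases, and is equal to neither; it does not drop out by combining the two formulae. Relatedly, your integration--by--parts sketch says the cross term between $\nabla\log|h'|$ and $\nabla\log(1+|h|^2)$ produces $\int_{\Disk}\log|z|\,|\nabla F|^2$, but it actually produces $\int_{\Disk}\log|h'|\cdot e^{2\mu}$ (or a boundary term involving $\log(1+|h|^2)$), not a $\log|z|$ term. The $\log|z|$ arises in the paper from the Green's function $G_{\Omega_j}\circ f_j=\log|\,\cdot\,|$, which enters through an entirely different mechanism. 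Finally, invoking ``Theorem~\ref{wp1}'' for the smooth case is circular: that theorem is equivalent to what you are trying to prove.

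The paper's actual route is as follows. For smooth $\Gamma$, it starts from the zeta--determinant expression \eqref{spherical_loewner} (Theorem~\ref{yilindet}), rewrites each $\det_\zeta(-\Delta_{S^2_\pm,g_0})$ as $\det_\zeta(-\Delta_{\Omega_j,g_j})$ for the pulled--back metric $g_j=(f_j^{-1})^\ast g_{\Disk}=e^{2\alpha_j}g_0$, and applies the Alvarez--Polyakov anomaly formula to $\log\det_\zeta(-\Delta_{\Omega_j,g_j})-\log\det_\zeta(-\Delta_{\Omega_j,g_0})$. This gives an expression in $\alpha_j=-\mu_j+\psi\circ f_j^{-1}$ which, after a sequence of integrations by parts exploiting $-\Delta_{g_0}\mu_j=1$, $G_{\Omega_j}=0$ on $\partial\Omega_j$, and the explicit evaluation $\int_{\Disk}|\nabla\psi|^2=4\pi\log 2-2\pi$, collapses to \eqref{new_loewner1}. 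That is the substantive step, and it does not proceed by manipulating $S_1$ or \eqref{s2}. The formula \eqref{conformula} is then just the change of variables \eqref{n_loewner}--\eqref{n_loewner2} together with $\mathrm{Area}(\Omega_1)+\mathrm{Area}(\Omega_2)=4\pi$. The extension to non--smooth Weil--Petersson curves is by approximation (Theorem~\ref{s3}), which you correctly identified; but the smooth case cannot be obtained along the lines you sketched. Indeed, the paper records in Lemma~\ref{grunsky} the nontrivial identity between $f$-- and $g$--integrals that $S_1=S_3$ \emph{implies}; this is evidence that a direct algebraic proof of $S_1=S_3$ from \eqref{defs1} and \eqref{s2} alone is not available without an additional input such as the determinant formula.
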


         \bigskip
         
         \textbf{Acknowledgements.} This paper is part of a common project between Paul Laurain and Romain Petrides and the two authors on the various characterisations of {Weil-Petersson quasicircles}. We thank Paul Laurain and Romain Petrides for useful discussions and kindly sharing their manuscript with us. We also thank Christopher Bishop for allowing us into his topic class which helped us understand his recent work [4]. A. M. is supported by the Early Postdoc.Mobility \emph{Variational Methods in Geometric Analysis} P$2$EZP$2$\_$191893$. Y. W. is partially supported by NSF grant DMS-$1953945$.
         
         \renewcommand{\thetheorem}{\thesection.\arabic{theorem}}

	   \section{Moving Frame Energy via Zeta-Regularised Determinants for Smooth Curves}\label{zeta}

    The following expression of the Loewner energy will prove crucial in this section.

    \begin{theorem}[Y. Wang \cite{yilinvention}]\label{yilindet}
   	 	Let $\alpha \in C^\infty(S^2, \R)$, $g=e^{2\alpha}g_0$ be a metric conformally equivalent to the spherical metric $g_0$ of $S^2$, and $\Gamma\subset S^2$ be a simple smooth curve. Let $\Omega_1,\Omega_2\subset S^2$ be the two disjoint open connected components of $S^2\setminus\Gamma$. Then we have
   	 	\begin{align}\label{spherical_loewner}
   	 		I^L(\Gamma)=12\,\log\frac{\det_{\zeta}(-\Delta_{S^2_-,g})\det_{\zeta}(-\Delta_{S^2_+,g})}{\det_{\zeta}(-\Delta_{\Omega_1,g})\det_{\zeta}(-\Delta_{\Omega_2,g})},
   	 	\end{align}
   	 	where $S_-^2$ \emph{(}resp. $S^2_+$\emph{)} is the southern hemisphere \emph{(}resp. the northern hemisphere\emph{)}.
   	    \end{theorem}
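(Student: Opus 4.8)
The plan is to deduce \eqref{spherical_loewner} from Theorem~\ref{loewner_weil}, which gives $I^L(\Gamma)=\tfrac1\pi S_1(\gamma)$ with $\gamma\subset\widehat{\C}$ the stereographic image of $\Gamma$ (the Loewner energy being M\"obius invariant, and after a further M\"obius normalisation so that $\gamma$ bounds a bounded Jordan domain), by recognising the logarithm of the quotient of $\zeta$-determinants as $\tfrac1{12\pi}S_1(\gamma)$ through repeated use of the Polyakov--Alvarez conformal anomaly formula for Dirichlet Laplacians on compact surfaces with smooth boundary. The first point is that the quotient does not depend on the choice of $g$ within the conformal class $[g_0]$: writing $g=e^{2\alpha}g_0$, the Polyakov--Alvarez formula expresses, for each of the four pieces $U\in\{S^2_-,S^2_+,\Omega_1,\Omega_2\}$, the difference $\log\det_\zeta(-\Delta_{U,g})-\log\det_\zeta(-\Delta_{U,g_0})$ as a fixed linear combination of the bulk integrals $\int_U|\nabla_{g_0}\alpha|^2\,d\mathrm{vol}_{g_0}$, $\int_U K_{g_0}\alpha\,d\mathrm{vol}_{g_0}$ and the boundary integrals $\int_{\partial U}k_{g_0}\alpha\,ds$, $\int_{\partial U}\partial_\nu\alpha\,ds$; taking the alternating sum over the four pieces, the bulk terms cancel because $S^2_-\cup S^2_+$ and $\Omega_1\cup\Omega_2$ both exhaust $S^2$ up to a null set, the contribution along the equator vanishes because a great circle has zero geodesic curvature and the two hemispheres induce opposite normals there, and the contribution along $\Gamma$ vanishes because $\Omega_1,\Omega_2$ induce opposite boundary orientations and opposite normals on $\Gamma$. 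So it suffices to prove \eqref{spherical_loewner} for $g=g_0$.

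Next I would transfer everything to the unit disk. Identify each $\Omega_j$ with its stereographic image and fix a conformal map $f_j:\mathbb{D}\to\Omega_j$ ($j=1,2$), precomposing with the inversion $z\mapsto 1/z$ (a $g_0$-isometry of $\widehat{\C}$) whenever $\Omega_j$ contains $\infty$, so that $f_j$ maps into $\C$ with $f_j'$ nowhere vanishing. By diffeomorphism invariance of the $\zeta$-determinant, $\det_\zeta(-\Delta_{\Omega_j,g_0})=\det_\zeta(-\Delta_{\mathbb{D},\,e^{2\sigma_j}|dz|^2})$ with $\sigma_j=\log|f_j'|+\rho\circ f_j$, $\rho=\log\tfrac2{1+|z|^2}$; likewise each hemisphere is $g_0$-isometric to $(\mathbb{D},h_0)$ with $h_0=e^{2\rho}|dz|^2$, so $\det_\zeta(-\Delta_{S^2_\pm,g_0})=\det_\zeta(-\Delta_{\mathbb{D},h_0})$. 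Putting $\psi_j=\sigma_j-\rho$ and applying the Polyakov--Alvarez formula on $\mathbb{D}$ with reference metric $h_0$ yields, in a suitable normalisation,
\begin{align*}
12\log\frac{\det_\zeta(-\Delta_{S^2_-,g_0})\det_\zeta(-\Delta_{S^2_+,g_0})}{\det_\zeta(-\Delta_{\Omega_1,g_0})\det_\zeta(-\Delta_{\Omega_2,g_0})}=\frac1\pi\sum_{j=1}^2\Bigl(\int_{\mathbb{D}}|\nabla\psi_j|^2\,dx+2\int_{\mathbb{D}}\psi_j\,e^{2\rho}\,dx+c\int_{S^1}\partial_r\psi_j\,ds\Bigr),
\end{align*}
where $c$ is a universal constant; the term $\int_{S^1}k_{h_0}\psi_j\,ds$ does not appear because $\partial\mathbb{D}$ (the equator) is an $h_0$-geodesic, and I have used the conformal invariance of the two-dimensional Dirichlet integral to write $\nabla$ for the flat metric.

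It then remains to evaluate the right-hand side. Since $\psi_j=\log|f_j'|+\rho\circ f_j-\rho$ with $\log|f_j'|$ harmonic on $\mathbb{D}$, the mean value property gives $\int_{S^1}\log|f_j'|\,ds=2\pi\log|f_j'(0)|$ and $\int_{S^1}\partial_r\log|f_j'|\,ds=0$; the Liouville equation $-\Delta\rho=e^{2\rho}$ gives $-\Delta(\rho\circ f_j)=e^{2\sigma_j}$, hence $\int_{S^1}\partial_r(\rho\circ f_j)\,ds=-\mathrm{Area}_{g_0}(\Omega_j)$ and $\int_{S^1}\partial_r\rho\,ds=-2\pi$, while $\rho|_{S^1}\equiv0$ disposes of the remaining boundary integrals and $\mathrm{Area}_{g_0}(\Omega_1)+\mathrm{Area}_{g_0}(\Omega_2)=4\pi$; finally a Green's representation rewrites $\int_{\mathbb{D}}\psi_j\,e^{2\rho}\,dx$ in terms of $\psi_j(0)$, the areas, and the pairing of the Green's function of $\mathbb{D}$ (with pole at $0$) against $e^{2\sigma_j}$. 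Expanding $|\nabla\psi_j|^2$ and collecting everything produces a symmetric expression over $j=1,2$ built from $\int_{\mathbb{D}}|\nabla\log|f_j'||^2$, cross-terms $\int_{\mathbb{D}}\nabla\log|f_j'|\cdot\nabla(\rho\circ f_j)\,dx$, Green's-function pairings (the $\int_{\mathbb{D}}\log|z|\,|\nabla f_j|^2$-type terms), the areas, and $\log|f_j'(0)|$; converting the component $\Omega_j$ that contains $\infty$ back to a map fixing $\infty$ via the inversion and invoking \eqref{s2} together with \eqref{defs1} then identifies this expression with $\tfrac1\pi S_1(\gamma)=I^L(\Gamma)$.

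The first two steps are essentially formal once the Polyakov--Alvarez formula is granted; the main obstacle is the last step. One must keep careful track of every boundary integral and every cross-term, together with the universal constants in the anomaly formula, and verify that the $\rho$-contributions and the effect of the inversion reassemble precisely into $S_1$ and into nothing else. The explicit evaluations $\rho|_{S^1}\equiv0$, $\rho(0)=\log2$ and $\mathrm{Area}_{g_0}(S^2_\pm)=2\pi$ are exactly what produce the additive constant, namely $-12\log 2$ in the equivalent conformal-map form $I^L(\Gamma)=\tfrac1\pi\sum_{j=1}^2\bigl(\int_{\mathbb{D}}|\nabla\log|\nabla f_j||^2+\int_{\mathbb{D}}\log|z|\,|\nabla f_j|^2+\mathrm{Area}(\Omega_j)+4\pi\log|\nabla f_j(0)|\bigr)-12\log 2$.
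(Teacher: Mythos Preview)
The paper does not prove this statement: Theorem~\ref{yilindet} is quoted as a result of \cite{yilinvention} and serves as an input to the proof of Theorem~\ref{wp10}, so there is no ``paper's own proof'' to compare against.

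That said, your strategy is the natural one, and in fact it is close in spirit to what the paper does in the proof of Theorem~\ref{wp10}, only run in the opposite direction. There, the authors start from \eqref{spherical_loewner}, pull each determinant back to the disk via the conformal maps $f_j$, apply the Polyakov--Alvarez formula with reference metric $g_0$ and conformal factor $\alpha_j=-\mu_j+\psi\circ f_j^{-1}$, and after a sequence of integrations by parts (equations \eqref{eq1}--\eqref{eq15}) arrive at the moving-frame expression $\mathscr{E}_0(\Gamma)$, which is later shown (Theorem~\ref{s3}) to equal $\pi I^L(\Gamma)$ for all Weil--Petersson curves. Your proposal instead aims to land directly on $S_1(\gamma)$ via \eqref{defs1} and \eqref{s2}; the intermediate ``conformal-map form'' you write at the end is exactly \eqref{conformula}. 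So you are effectively proposing to prove Theorem~\ref{yilindet} by establishing Theorem~\ref{s03} first and then reading the determinant identity off the Polyakov--Alvarez computation, whereas the paper takes Theorem~\ref{yilindet} for granted and derives Theorem~\ref{s03} (for smooth curves) from it.

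Two remarks on the proposal itself. First, your reduction to $g=g_0$ is correct and the cancellation argument is clean. Second, the ``main obstacle'' you flag is genuine: the bookkeeping of the cross-terms $\int_{\mathbb D}\nabla\log|f_j'|\cdot\nabla(\rho\circ f_j)\,dx$ and of the boundary integral $\int_{S^1}\partial_r\psi_j\,ds$, together with the passage from $f_2$ to $g$ via the inversion, is exactly the content of equations \eqref{eq3}--\eqref{eq13ter} and \eqref{n_loewner4}--\eqref{constante} in the paper, and requires the same care. You have the right pieces identified but have not actually carried the computation through; if you do, the identity you want is precisely \eqref{n_loewner3}.
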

	 
	 We now use {the} formula {\eqref{spherical_loewner} expressing} the Loewner energy {in terms of} zeta-regularised determinants to link {the Loewner energy to} the renormalised energy of moving {frames}
	 on 
	 $S^2$.
	 First, let $g_0=g_{S^2}$ be the standard round metric on $S^2$. Let $\Gamma\subset S^2$ be a simple \emph{smooth}\footnote{It is necessary to assume that the curve {is smooth} 
	 for one will need to recurse to the Froebenius theorem below. Furthermore, the formula for the Loewner energy using the zeta-regularised determinants (\cite{yilinvention}) only works for smooth (or at least $C^3$) curves (\cite{takteo}, Corollary $3.12$).} curve, and let $\Omega_1,\Omega_2\subset S^2$ the two disjoint open connected components of $S^2\setminus\Gamma$. Since we are working on a curved manifold, we cannot directly use the result of \cite{lauromain} to construct moving frames with the Ginzburg-Landau method. However, 
	 {we will construct them directly in}
	 Section \ref{section7} (see Theorem \ref{key2}).
	 Therefore, let {us} assume that $(\e_1,\f_1):\Omega_1\setminus\ens{p_1}\rightarrow US^2\times US^2$ are harmonic vector fields such that $\e_1=\tau$ on $\partial\Omega_1=\Gamma$ (where $\tau$ is the unit tangent on $\Gamma$), 	 and the $1$-form $\omega=\s{\e_{1}}{d\f_{1}}$ satisfies
	 \begin{align}\label{cartan0}
	 	\omega=\ast\,d\,\left(G_{\Omega_1}+\mu_1\right)\qquad\text{in}\;\, \mathscr{D}'(\Omega_1)
	 \end{align}
	 where $G_{\Omega_{1}}:\Omega_1\setminus\ens{p_1}\rightarrow \R$ is the Green's function for the Laplacian on $\Omega_1\setminus\ens{p_1}$ with Dirichlet boundary condition. Namely, $G_{\Omega_1}$ satisfies 
	 \begin{align}\label{new_harmonic}
	 	\left\{\begin{alignedat}{2}
	 		\Delta_{g_0}G_{\Omega_1}&=2\pi\delta_{p_1}\qquad&&\text{in}\;\,\mathscr{D}'(\Omega_1)\\
	 		G_{\Omega_1}&=0\qquad&&\text{on}\;\,\partial\Omega_1,
	 	\end{alignedat} \right.
	 \end{align}
	 and $\mu_1:\Omega_1\rightarrow \R$ is a smooth function satisfying
	 \begin{align}\label{geodesic}
	 	\left\{\begin{alignedat}{2}
	 		-\Delta_{g_0}\mu_1&=1\qquad&&\text{in}\;\,\Omega_1\\
	 		\partial_{\nu}\mu_1&=k_{g_0}-\partial_{\nu}G_{\Omega_1}\qquad&&\text{on}\;\,\partial \Omega_1.
	 	\end{alignedat} \right.
	 \end{align}
	 where $k_{g_0}$ is the geodesic curvature with respect to the round metric $g_0$, and the normal derivative is taken with respect to the $g_0$. 

   To fix notations, we recall the following result. 
   \begin{theorem}[\cite{Jonsson-Wallin}, see also, 
   \cite{jones,wallin}]
         Let  $\Omega\subset \C$ \emph{(}resp. $\Omega\subset S^2$\emph{)} be a {bounded simply} connected domain {such that $\partial\Omega$ is chord-arc, and} let $g_0$ be the flat metric on $\Omega$ \emph{(}resp. $g_0$ be the round metric on $S^2$\emph{)}. Then for all $p\in \Omega$, there exists a unique Green's function $G_{\Omega,p}\in C^{\infty}(\Omega\setminus\ens{p},\R)$  with Dirichlet boundary condition.
        Furthermore, for 
        every $h\in H^{1/2}(\partial \Omega,\R)$, there exists a unique function $u\in W^{1,2}(\Omega,\R)$ such that
        \begin{align*}
        \left\{\begin{alignedat}{2}
        \Delta_{g_0}u&=0\qquad&&\text{in}\;\,\Omega\\
        u&=h\qquad&&\text{on}\;\,\partial\Omega.
        \end{alignedat}\right.
        \end{align*}
        \end{theorem}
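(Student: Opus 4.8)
The plan is to treat the two assertions — existence and uniqueness of the Dirichlet Green's function $G_{\Omega,p}$, and of the harmonic extension $u$ of boundary data $h\in H^{1/2}(\partial\Omega)$ — separately, and to reduce the spherical case to the planar one. Since $\Omega\subset S^2$ is one of the two components of $S^2\setminus\Gamma$, I would choose a stereographic projection centred at a point of the \emph{other} component, so that $\Omega$ is mapped onto a bounded planar Jordan domain whose boundary is again chord-arc and on whose closure the conformal factor of $g_{\widehat{\C}}$ relative to the flat metric is bounded above and below. Because in dimension two the Dirichlet energy $\int_\Omega|\nabla u|^2\,dx$ is conformally invariant and harmonicity is preserved under a conformal change of metric ($\Delta_g=e^{-2\alpha}\Delta_{g_0}$), the space $W^{1,2}$ (with comparable norms) and the notions of harmonic function and of Green's function agree for $g_0$ and for the flat metric; hence it suffices to argue for a bounded, simply connected, chord-arc planar domain.

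For the Green's function I would argue by conformal mapping. Let $f:\mathbb{D}\to\Omega$ be a Riemann map with $f(0)=p$ and set $\phi=f^{-1}$. Since $\partial\Omega$ is a chord-arc curve it is in particular a Jordan curve, so by Carathéodory's theorem \cite{caratheodory} $f$ extends to a homeomorphism $\overline{\mathbb{D}}\to\overline\Omega$; thus $\phi$ extends continuously to $\overline\Omega$ with $|\phi|=1$ on $\partial\Omega$. The function $G_{\Omega,p}:=\log|\phi|$ is then harmonic and real-analytic on $\Omega\setminus\{p\}$ (as $\phi$ is holomorphic and locally biholomorphic there), vanishes continuously on $\partial\Omega$, and near $p$ satisfies $G_{\Omega,p}(w)=\log|w-p|+O(1)$ because $\phi(w)=\phi'(p)(w-p)+\cdots$; this is exactly the Dirichlet Green's function normalised as in \eqref{new_harmonic}. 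Uniqueness is immediate: the difference of two such Green's functions is bounded, harmonic on all of $\Omega$ (the logarithmic singularities cancel), and vanishes continuously on $\partial\Omega$, hence is identically zero by the maximum principle.

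For the harmonic extension the key point is the trace theory on the rough boundary $\partial\Omega$. A chord-arc curve is \emph{Ahlfors $1$-regular}: the lower bound $\sigma(\Gamma\cap B(x,r))\gtrsim r$ holds because the arc issued from $x$ must reach $\partial B(x,r)$, and the upper bound $\sigma(\Gamma\cap B(x,r))\lesssim r$ follows from the chord-arc inequality $\ell(x,y)\le K|x-y|$. Consequently $\Gamma$ is a $1$-set in the sense of Jonsson--Wallin, the intrinsic norm \eqref{h12norm} coincides with the Besov norm $B^{1/2}_{2,2}(\Gamma)$, and their trace theory \cite{Jonsson-Wallin,jones} provides a bounded extension operator $E:H^{1/2}(\partial\Omega)\to W^{1,2}(\R^2)$; since a chord-arc domain is a quasidisk and hence a Sobolev extension domain (Jones), restriction to $\Omega$ yields a bounded $E:H^{1/2}(\partial\Omega)\to W^{1,2}(\Omega)$ whose trace on $\partial\Omega$ is the identity. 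Given $h$, I would put $H:=Eh\in W^{1,2}(\Omega)$ and solve $\Delta v=-\Delta H$ for $v\in W^{1,2}_0(\Omega)$ by minimising $\int_\Omega|\nabla(H+w)|^2\,dx$ over $w\in W^{1,2}_0(\Omega)$; the minimiser exists and is unique by the direct method (the Poincaré inequality holds on the bounded domain $\Omega$), and its Euler--Lagrange equation shows $u:=H+v$ is weakly harmonic, hence smooth in $\Omega$ by Weyl's lemma, with $\mathrm{Tr}(u)=\mathrm{Tr}(H)=h$. Uniqueness follows since the difference $w$ of two solutions lies in $W^{1,2}_0(\Omega)$ and is harmonic, so testing the weak equation against $w$ gives $\int_\Omega|\nabla w|^2\,dx=0$, whence $w\equiv0$ by Poincaré.

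The main obstacle is precisely the boundary analysis in the third step: on a chord-arc (but possibly non-$C^1$, spiralling) curve there is no elementary Gagliardo trace theorem, and one must verify that $\partial\Omega$ is Ahlfors $1$-regular and invoke the Jonsson--Wallin identification of $H^{1/2}(\partial\Omega)$ as the trace space of $W^{1,2}$, together with the Jones extension property of quasidisks. Everything else — the conformal reduction, the explicit Green's function, and the Lax--Milgram/variational solution of the inhomogeneous problem — is standard once this trace/extension pair is in hand.
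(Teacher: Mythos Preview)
Your proposal is correct and follows essentially the same route as the paper's treatment: the paper does not give a detailed proof but only a remark, observing that the Green's function is obtained by pulling back $G_{\mathbb{D},0}=\log|z|$ via a Riemann map $f:\mathbb{D}\to\Omega$ with $f(0)=p$, that the chord-arc hypothesis is precisely what makes the Jonsson--Wallin trace theorems applicable, and that the spherical case reduces to the planar one by stereographic projection and conformal invariance. Your write-up simply fleshes out each of these points (Ahlfors regularity of chord-arc curves, the variational solution of the Dirichlet problem, and uniqueness via the maximum principle and Poincar\'e), which is exactly what the paper defers to the cited references.
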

        Whenever it is clear from context, we will write $G_{\Omega_1}$ for $G_{\Omega_1,p_1}$.
         \begin{rem}
       {The existence of a
        Green's function follows from its conformal invariance and the uniformisation theorem. Indeed, if $\Omega$ is a Jordan domain, and $f:\mathbb{D}\rightarrow\Omega$ is a biholomorphic map such that $f(0)=p$, and $G_{\mathbb{D},0}=\log|z|$, then $G_{\Omega,p}=G_{\mathbb{D},0}\circ f^{-1}$. We assume that $\partial \Omega$ is chord-arc so that the trace theorems apply as in \cite{Jonsson-Wallin,wallin}.} 
        The passage from $\C$ to $S^2$ is easy using a stereographic projection and the conformal invariance of Green's functions. 
        \end{rem}
 
	 Now, 
	 {following} Proposition $5.1$ of \cite{lauromain}, {it is not hard to} see that the{ir} proof using the Froebenius {theorem} also works for domains of the sphere,
	 and we get a conformal diffeomorphism $	 {\varphi}:(-\infty,0)\times \partial B(0,\rho)\rightarrow \Omega_1\setminus\ens{p_1}$ {for some $\rho > 0$} such that

	 \begin{align*}
	 \begin{alignedat}{1}
	 	\partial_{s}{\varphi}(s,\theta)&=e^{G_{\Omega_1}\circ {\varphi}+\mu_1\circ{\varphi}}\f_{1}\circ{\varphi}\\
	 	\partial_{\theta}{\varphi}(s,\theta)&=e^{G_{\Omega_1}\circ{\varphi}+\mu_1\circ{\varphi}}\e_{1}\circ{\varphi}.
	 \end{alignedat}
	 \end{align*}
	 Notice that the Proposition $5.1$ of \cite{lauromain} gives a privileged $p_1\in\Omega_1$, but we will show in Theorem \ref{key2} that $p_1$ can be taken arbitrarily (see also Theorem \ref{s3}). However, the proof works for an arbitrary harmonic moving frame whose Cartan form admits an expansion as in \eqref{cartan0} where $\mu_1$ solves \eqref{geodesic}. 
	 Since $\mu_1$ is defined up to {an additive} constant, we can assume that $\rho=1$ in the following. 
	 {We define the conformal map} $f_1:\mathbb{D}\rightarrow \Omega_1$ using the polar coordinates by
	 \begin{align*}
	 	f_1(r,\theta)={\varphi}(\log(r),\theta),
	 \end{align*}
	 we can continuously extend $f_1$ at $z=0$ such that $f_1(0)=p_1$.
	 
	 {Now we relate $\mu_1$ to $f_1$. S}ince $f_{1}$ is conformal, the function $G=G_{\Omega_1}\circ f_{1}:\mathbb{D}\setminus\ens{0}\rightarrow\R$ is harmonic on $\mathbb{D}\setminus\ens{0}$, satisfies $G=0$ on $\partial\mathbb{D}$, so by \eqref{new_harmonic}, we deduce that
	 \begin{align*}
	 	G=G_{\mathbb{D},{0}}=\log|z|.
	 \end{align*}
	 Therefore, we have
	 \begin{align}\label{moving_frames}
	 	\left\{\begin{alignedat}{1}
	 		\partial_r f_1&=\frac{1}{r}\partial_{s}{\varphi}(\log(r),\theta)=\frac{1}{r}e^{\log(r)+\mu_1\circ f_1}{\f}_1\circ f_1=e^{\mu_1\circ f_1}\f_1\circ f_1\\
	 		\frac{1}{r}\partial_{\theta}f_1&=\frac{1}{r}\partial_{\theta}{\varphi}(\log(r),\theta)=e^{\mu_1\circ f_1}\e_1\circ f_1.
	 	\end{alignedat}\right.
	 \end{align}
	 Since 
	 {$|\e_1|=|\f_1|=1$}, and $\s{\e_{1}}{\f_{1}}=0$, we deduce that  
	 \begin{align*}
	 	&|\partial_rf_1|^2=\frac{1}{r^2}\left|\partial_{\theta}f_1\right|^2=e^{2\mu_1\circ f_1}\\
	 	&\s{\partial_rf_1}{\partial_{\theta}f_1}=0,
	 \end{align*}
	 which shows that the conformal parameter of $f$ is
	 \begin{align*}
	 	\frac{1}{2}|\D f_1|^2 
	 	= {\frac{1}{2} \left(|\partial_rf_1|^2+ \frac{1}{r^2}\left|\partial_{\theta}f_1\right|^2\right)}=e^{2\mu_1\circ f_1},
	 \end{align*}
	 which implies that
	 \begin{align}\label{eq:change_factor_mu}
	 \mu_1=\log|\D f_1|\circ f_1^{-1}-\frac{1}{2}\log(2).
	 \end{align}
	 In particular, we have
	 \begin{align}\label{rho}
	     \mu_1(p_1)=\log|\D f_1(0)|-\frac{1}{2}\log(2),
	 \end{align}
	 where $p_1\in \Omega_1$ is the singularity of the moving frame $(\e_1,\f_1):\Omega_1\setminus\ens{p_1}\rightarrow US^2\times U S^2$.

	 {We can relate the change of metric by $f_1$ to $\mu_1$ as follows.}
    If $\iota:\Omega_1\subset S^2\hookrightarrow \R^3$ is the inclusion map, we have ${{g_{0}}_{|\Omega_1}}	 =\iota^{\ast}g_{\R^3}$. 

	 As $f_{1}$ is conformal, we have
	 \begin{align}\label{conformal_factor1}
	 	f_1^{\ast}{g_{0}}_{|\Omega_1} &=f_1^{\ast}\iota^{\ast}g_{\R^3}=(\iota\circ f_1)^{\ast}g_{\R^3}= {\frac{1}{2}}|\D f_1(z)|^2|dz|^2=e^{2\mu_1\circ f_1(z)}|dz|^2=e^{2\mu_1\circ f_1(z)-2\psi(z
	 		)}\frac{4|dz|^2}{(1+|z|^2)^2}\nonumber\\
	 	&=e^{2\mu_1\circ f_1-2\psi}(({\pi^{-1}})^{\ast}g_{0})|_{\mathbb{D}},
	 \end{align}
	 where
	 \begin{align*}
	 	\psi(z)=\log\left(\frac{2}{1+|z|^2}\right),
	 \end{align*}
	 and $\pi^{-1}:\C\rightarrow S^2\setminus\ens{N}$ is the inverse stereographic projection. 
  Writing for simplicity
	 \begin{align*}
	 	g_{\mathbb{D}}=\frac{4|dz|^2}{(1+|z|^2)^2}=e^{2\psi(z)}|dz|^2  = (\pi^{-1})^\ast g_0|_{\mathbb D},
	 \end{align*}
	 we deduce by \eqref{conformal_factor1} that
	 \begin{align*}
	 	{g_{0}}_{|\Omega_1}=(f_1\circ f_1^{-1})^{\ast}(g_{0|\Omega_1})=(f_1^{-1})^{\ast}f_1^{\ast}{g_{0}}_{|\Omega_1}=
	 	e^{2\mu_1-2\psi\circ f_1^{-1}}(f_1^{-1})^{\ast}(g_{\mathbb{D}}),
	 \end{align*}
	 so that (by an abuse of notation for the last identity) 
	 \begin{align}\label{conformal_factor2}
   (f_1^{-1})^{\ast}(g_{\mathbb{D}})=e^{-2\mu_{1}+2\psi\circ f_1^{-1}}{g_{0}}_{|\Omega_1}=e^{2\alpha_1}{g_{0}}_{|\Omega_{1}}
	 \end{align}
	 where 
	 \begin{align*}
	 	\alpha_1(z)=-\mu_{1}(z)+\psi {\circ} f_1^{-1}(z).
	 \end{align*}
	 
	 \begin{rem}\label{rem:existence_frame}
	 To summarize, the above discussion shows that the moving frame $(\e_1, \f_1)$ satisfying the boundary condition $\e_1 = \tau$ on $\Gamma$, \eqref{cartan0}, and \eqref{geodesic} is tightly related to a conformal map $f_1 : \mathbb D \to \Omega_1$ using Froebenius theorem as in \cite{lauromain}, in the way that the moving frame satisfies \eqref{moving_frames}. However, we can start directly with any conformal map $f_1$ and \eqref{moving_frames} gives a moving frame $(\e_1, \f_1)$ which satisfies \eqref{cartan0} and \eqref{geodesic}. This is the approach we take in Section~\ref{section7} which allows us to relax the regularity assumption of $\partial \Omega_1=\Gamma$.
	
	 \end{rem} 
	 \begin{defi}
	 	Define the open subsets $S_+^2,S^2_-\subset S^2$ by 
	 	\begin{align*}
	 		S_+^2&=S^2\cap\ens{(x,y,z) {\in \mathbb R^3}:z>0}\\
	 		S_-^2&=S^2\cap\ens{(x,y,z) {\in \mathbb R^3}:z<0}.
	 	\end{align*}
	 \end{defi}
	 
	 \begin{theorem}\label{wp10}
	 	Let $\Gamma\subset S^2$ be a smooth Jordan curve, and let $\Omega_1,\Omega_2\subset S^2$ the two disjoint open connected components of $S^2\setminus\Gamma$. Fix some $j=1,2$. Then, for all $p_j\in \Omega_j$ and for all harmonic moving frames $(\e_j,\f_j):\Omega_j\setminus\ens{p_j}\rightarrow U\Omega_j\times U\Omega_j$ such that the Cartan form $\omega_j=\s{\e_j}{d\f_j}$ admits the decomposition
	 	\begin{align*}
	 		\omega_j=\ast\,d\left(G_{\Omega_j}+\mu_j\right),
	 	\end{align*}
	 	where $G_{\Omega_j}:\Omega_j\setminus\ens{p_j}\rightarrow \R$ is the Green's function of the Laplacian $\Delta_{g_0}$ on $\Omega_j$ with Dirichlet boundary condition, and $\mu_j\in C^{\infty}(\Omega_j)$ satisfies
	 	\begin{align}\label{boundary}
	 		\left\{\begin{alignedat}{2}
	 			-\Delta_{g_0}\mu_j&=1\qquad&&\text{in}\;\,\Omega_j\\
	 			\partial_{\nu}\mu_j&=k_{g_0}-\partial_{\nu}G_{\Omega_j}\qquad&&\text{on}\;\,\partial\Omega_j,
	 		\end{alignedat} \right.
	 	\end{align}
	 	where $k_{g_0}$ is the geodesic curvature on $\Gamma=\partial\Omega_j$. Define the functional $\mathscr{E}$ \emph{(}that we call the renormalised energy associated to the frames $(\e_1,\f_1)$ and $(\e_2,\f_2)$\emph{)} by
	 	\begin{align*}
	 		\mathscr{E}(\Gamma)=\int_{\Omega_1}|d\mu_1|^2_{g_0}d\mathrm{vol}_{g_0}+\int_{\Omega_2}|d\mu_2|^2_{g_0}d\mathrm{vol}_{g_0}+2\int_{\Omega_1}G_{\Omega_1}K_{g_0}d\mathrm{vol}_{g_0}+2\int_{\Omega_2}G_{\Omega_2}K_{g_0}d\mathrm{vol}_{g_0}+4\pi.
	 	\end{align*}
	 	Then there exists conformal maps $f_1:\mathbb{D}\rightarrow \Omega_1$ and $f_2:\mathbb{D}\rightarrow \Omega_2$ such that $f_1(0)=p_1$, $f_2(0)=p_2$ and 
	 	\begin{align*}
	 		I^L(\Gamma)=\frac{1}{\pi}\mathscr{E}(\Gamma)+4\log|\D f_1(0)|+4\log|\D f_2(0)|-12\log(2)=\frac{1}{\pi}\mathscr{E}_0(\Gamma).
	 	\end{align*}
	 \end{theorem}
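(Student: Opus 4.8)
The plan is to evaluate the zeta‑regularised determinants in Y.~Wang's formula (Theorem~\ref{yilindet}), applied with the reference metric $g=g_0$, by transplanting each domain to the flat unit disk and invoking the Polyakov--Alvarez conformal anomaly formula. Concretely, one starts from
\[
I^L(\Gamma)=12\log\frac{\det_\zeta(-\Delta_{S^2_-,g_0})\,\det_\zeta(-\Delta_{S^2_+,g_0})}{\det_\zeta(-\Delta_{\Omega_1,g_0})\,\det_\zeta(-\Delta_{\Omega_2,g_0})}.
\]
Because $\Gamma$ is smooth, the conformal map $f_j:\mathbb D\to\Omega_j$ produced from the moving frame by the Froebenius argument (see the discussion preceding the theorem, and Remark~\ref{rem:existence_frame}) extends to a smooth diffeomorphism $\overline{\mathbb D}\to\overline{\Omega_j}$, and $\mu_j\in C^\infty(\overline{\Omega_j})$ by elliptic regularity for \eqref{boundary}; this smoothness up to the boundary is what makes the anomaly formula applicable, and is the reason the section is confined to smooth curves.

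Since $f_j$ is an isometry $(\mathbb D, f_j^{\ast}g_0)\to(\Omega_j, g_0)$ with $f_j^{\ast}g_0=e^{2\phi_j}|dz|^2$, $\phi_j:=\mu_j\circ f_j$, by \eqref{conformal_factor1}, we get $\det_\zeta(-\Delta_{\Omega_j,g_0})=\det_\zeta(-\Delta_{\mathbb D,\,e^{2\phi_j}|dz|^2})$; likewise both hemispheres are isometric to $(\mathbb D,g_{\mathbb D})$ with $g_{\mathbb D}=e^{2\psi}|dz|^2$, so $\det_\zeta(-\Delta_{S^2_\pm,g_0})=\det_\zeta(-\Delta_{\mathbb D,\,e^{2\psi}|dz|^2})$. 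Applying the Polyakov--Alvarez formula for the Dirichlet Laplacian on $\mathbb D$ with flat reference metric (for which the Gaussian curvature vanishes and the geodesic curvature of $\partial\mathbb D$ equals $1$) to the conformal factors $\phi_1,\phi_2,\psi$, the universal terms $\log\det_\zeta(-\Delta_{\mathbb D,|dz|^2})$ cancel in the ratio, and the $\psi$‑contribution becomes an explicit constant computed from $\int_{\mathbb D}|\nabla\psi|^2|dz|^2=4\pi\log 2-2\pi$, $\psi|_{\partial\mathbb D}=0$, $\partial_n\psi|_{\partial\mathbb D}=-1$. One is left with
\[
I^L(\Gamma)=16-8\log 2+\frac{1}{\pi}\sum_{j=1}^{2}\Big(\int_{\mathbb D}|\nabla\phi_j|^2|dz|^2+2\oint_{\partial\mathbb D}\phi_j\,ds+3\oint_{\partial\mathbb D}\partial_n\phi_j\,ds\Big),
\]
the boundary integrals being for the flat disk.

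It then remains to rewrite the three integrals. The bulk term is conformally invariant in dimension two: $\int_{\mathbb D}|\nabla\phi_j|^2|dz|^2=\int_{\Omega_j}|d\mu_j|^2_{g_0}\,d\mathrm{vol}_{g_0}$. For the last boundary term, the curvature identity $-\Delta_{|dz|^2}\phi_j=e^{2\phi_j}(K_{g_0}\circ f_j)=e^{2\phi_j}$ gives, by the divergence theorem, $\oint_{\partial\mathbb D}\partial_n\phi_j\,ds=-\int_{\mathbb D}e^{2\phi_j}|dz|^2=-\mathrm{Area}(\Omega_j)$, and $\mathrm{Area}(\Omega_1)+\mathrm{Area}(\Omega_2)=4\pi$. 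For the middle term I would use $G_{\Omega_j}\circ f_j=\log|z|$ (which holds by \eqref{new_harmonic} and uniqueness), transplant $\partial_\nu G_{\Omega_j}\,ds_{g_0}$ on $\partial\Omega_j$ to $ds$ on $\partial\mathbb D$ via $f_j$, and apply Green's identity on $\Omega_j$ with $-\Delta_{g_0}\mu_j=1$ and $\Delta_{g_0}G_{\Omega_j}=2\pi\delta_{p_j}$, after excising a small $g_0$‑geodesic disk about $p_j$ and using that the Robin constant of $G_{\Omega_j}$ at $p_j$ equals $-\mu_j(p_j)$ (a consequence of \eqref{rho} and $f_j^{\ast}g_0=e^{2\phi_j}|dz|^2$). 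This produces
\[
\oint_{\partial\mathbb D}\phi_j\,ds=\int_{\Omega_j}G_{\Omega_j}\,d\mathrm{vol}_{g_0}+2\pi\mu_j(p_j)=\int_{\Omega_j}G_{\Omega_j}K_{g_0}\,d\mathrm{vol}_{g_0}+2\pi\log|\nabla f_j(0)|-\pi\log 2,
\]
using $K_{g_0}\equiv 1$ and $\mu_j(p_j)=\log|\nabla f_j(0)|-\tfrac12\log 2$. Substituting back and collecting constants ($16-8\log 2$ from the $\psi$‑term, $-12$ from the two area terms, $-4\log 2$ from the two $\mu_j(p_j)$ terms, and $4\pi$ absorbed into $\mathscr{E}$, since $4-12\log 2=\frac1\pi(4\pi)-12\log 2$) yields exactly $I^L(\Gamma)=\tfrac1\pi\mathscr{E}(\Gamma)+4\log|\nabla f_1(0)|+4\log|\nabla f_2(0)|-12\log 2$; the final equality with $\tfrac1\pi\mathscr{E}_0(\Gamma)$ is the definition of $\mathscr{E}_0$.

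The substantive obstacle is the Polyakov--Alvarez anomaly formula for the Dirichlet Laplacian on a surface with boundary, and in particular the exact normalisation of its three pieces $-\tfrac{1}{12\pi}\int_M(|\nabla\sigma|^2+2K_0\sigma)$, $-\tfrac{1}{6\pi}\oint_{\partial M}k_0\sigma$, $-\tfrac{1}{4\pi}\oint_{\partial M}\partial_n\sigma$ (a useful check: a constant conformal factor must reproduce the determinant's scaling through $\zeta_{-\Delta^D}(0)=\tfrac16\chi$, which pins the relative coefficients), together with the verification that it applies to the present conformal factors — which is where smoothness of $\Gamma$ enters. Everything else is bookkeeping: conformal invariance of the Dirichlet energy, Gauss--Bonnet (equivalently the curvature identity) on each $\Omega_j$, the two explicit $\psi$‑integrals, and a single integration by parts handling the logarithmic singularity of the Green's function.
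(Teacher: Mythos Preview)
Your proof is correct and follows essentially the same strategy as the paper: start from the zeta–determinant formula (Theorem~\ref{yilindet}), apply the Polyakov--Alvarez anomaly for the Dirichlet Laplacian, and reduce the boundary terms using $-\Delta_{g_0}\mu_j=1$, the Green identity with $G_{\Omega_j}$, and $\mu_j(p_j)=\log|\nabla f_j(0)|-\tfrac12\log 2$.

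The one genuine difference is where you apply Polyakov--Alvarez. The paper transports the hemisphere to $\Omega_j$ via $f_j^{-1}$ and compares $g_0$ with $g_1=e^{2\alpha_1}g_0$, $\alpha_1=-\mu_1+\psi\circ f_1^{-1}$, working on $\Omega_j$ with the round reference; this forces it to carry both $\mu_j$ and $\theta_j=\psi\circ f_j^{-1}$ through a sequence of integrations by parts (equations \eqref{eq3}--\eqref{eq13ter}) before the $\theta_j$-terms cancel. You instead pull everything back to $(\mathbb D,|dz|^2)$ and compare $|dz|^2$ separately with $e^{2\phi_j}|dz|^2$ and $e^{2\psi}|dz|^2$; the hemisphere contribution becomes the single explicit constant $16-8\log 2$, and the remaining three $\phi_j$-integrals are dispatched by conformal invariance of the Dirichlet energy, the curvature identity $-\Delta\phi_j=e^{2\phi_j}$, and the observation that $f_j^\ast\bigl(\partial_\nu G_{\Omega_j}\,d\mathscr{H}^1_{g_0}\bigr)=ds$ on $\partial\mathbb D$ (since $G_{\Omega_j}\circ f_j=\log|z|$ and the $e^{\pm\phi_j}$ factors from arc length and unit normal cancel). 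Your route is a cleaner bookkeeping of the same computation; the paper's route keeps everything intrinsic on $\Omega_j$, at the cost of tracking one extra function through the integrations by parts. Both rely on the same smoothness hypothesis to justify the anomaly formula up to the boundary.
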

  \begin{rem}
      If $\mathscr{W}_j$ ($j=1,2$) is the renormalised energy in the sense of Bethuel-Brezis-Hélein associated to the moving frame $(\e_j,\f_j)$—or more precisely, to its boundary data—(see \cite{BBH} and \cite{lauromain}), we have
	 	\begin{align*}
	 		\mathscr{W}_1+\mathscr{W}_2=\mathscr{E}(\Gamma)+2\pi\log|\D f_1(0)|+2\pi\log|\D f_2(0)|,
	 	\end{align*}
	 	since $\omega_1-\ast\,dG_{\Omega_1}=\ast\, d\mu_1$. 
  \end{rem}

	 \begin{proof}[Proof of Theorem~\ref{wp10}]
	    If $\pi^{-1}:\C\rightarrow S^2\setminus\ens{N}$ is the inverse stereographic projection,
	 	\begin{align*}
	 		g_{\mathbb{D}}=\frac{4|dz|^2}{(1+|z|^2)^2}=e^{2\psi(z)}|dz|^2,
	 	\end{align*}
	 	and $S^2_-$ is the southern hemisphere, we deduce that 
	 	\begin{align*}
	 		\dzeta(-\Delta_{S^2_-,g_0})&=\dzeta(-\Delta_{\mathbb{D},\pi^{\ast}g_0})=\dzeta(-\Delta_{\mathbb{D},g_{\mathbb{D}}})
	 		=\dzeta(-\Delta_{\Omega_1,\varphi^{\ast}g_{\mathbb{D}}})=\dzeta(-\Delta_{\Omega_1,g_{1}}),
	 	\end{align*}
	 	and by the Alvarez-Polyakov formula {(see (1.17) of \cite{sarnak})} and \eqref{conformal_factor2}, we have
	 	\begin{align*}
	 		&\dzeta(-\Delta_{S^2_-,g_0})-\dzeta(-\Delta_{\Omega_1,g_0})=\dzeta(-\Delta_{\Omega_1,g_1})-\dzeta(-\Delta_{\Omega_1,g_0})\\
	 		&=-\frac{1}{12\pi}\bigg\{\int_{\Omega_1}|d\alpha_1|_{g_0}^2d\mathrm{vol}_{g_0}+2\int_{\Omega_1}K_{g_0}\alpha_1\,  d\mathrm{vol}_{g_0}+2\int_{\Gamma}k_{g_0}\alpha_1\,  d\mathscr{H}^1_{g_0}+3\int_{\Gamma}\partial_{\nu}\alpha_1\, d\mathscr{H}^1_{g_0}\bigg\}\\
	 		&=-\frac{1}{12\pi}\bigg\{\int_{\Omega_1}|d(-\mu_1+\theta_1)|_{g_0}^2d\mathrm{vol}_{g_0}+2\int_{\Omega_1}K_{g_0}(-\mu_1+\theta_1)d\mathrm{vol}_{g_0}
	 		+2\int_{\Gamma}k_{g_0}(-\mu_1+\theta_1)d\mathscr{H}^1_{g_0}\\
	 		&
	 		+3\int_{\Gamma}\partial_{\nu}(-\mu_1+\theta_1)d\mathscr{H}^1_{g_0}\bigg\},
	 	\end{align*} 
	 	if we choose $\Gamma$ to be 
	 	{oriented as} $\partial\Omega_1$, and
	 	\begin{align*}
	 		\theta_1=\psi\circ f^{-1}_1.
	 	\end{align*}
	 	Therefore, using subscripts with evident notations, we deduce by Theorem \ref{yilindet} with $g=g_0$ that
	 	\begin{align}\label{eq1}
	 		&-\pi\,I^L(\Gamma)=-12\pi\,\log\frac{\det_{\zeta}(-\Delta_{S^2_-,g_0})\det_{\zeta}(-\Delta_{S^2_+,g_0})}{\det_{\zeta}(-\Delta_{\Omega_1,g_0})\det_{\zeta}(-\Delta_{\Omega_2,g_0})}=-12\pi\,\log\frac{\det_{\zeta}(-\Delta_{\Omega_1,g_1})\det_{\zeta}(-\Delta_{\Omega_2,g_2})}{\det_{\zeta}(-\Delta_{\Omega_1,g_0})\det_{\zeta}(-\Delta_{\Omega_2,g_0})}\nonumber\\
	 		&=\int_{\Omega_1}|d(-\mu_1+\theta_1)|_{g_0}^2d\mathrm{vol}_{g_0}+2\int_{\Omega_1}K_{g_0}(-\mu_1+\theta_1)d\mathrm{vol}_{g_0}
	 		+2\int_{\partial\Omega_1}k_{g_0}(-\mu_1+\theta_1)d\mathscr{H}^1_{g_0}\nonumber\\
	 		&
	 		+3\int_{\partial\Omega_1}\partial_{\nu}(-\mu_1+\theta_1)d\mathscr{H}^1_{g_0}\nonumber\\
	 		&+\int_{\Omega_2}|d(-\mu_2+\theta_2)|_{g_0}^2d\mathrm{vol}_{g_0}+2\int_{\Omega_2}K_{g_0}(-\mu_2+\theta_2)d\mathrm{vol}_{g_0}
	 		+2\int_{\partial\Omega_2}k_{g_0}(-\mu_2+\theta_2)d\mathscr{H}^1_{g_0}\nonumber\\
	 		&
	 		+3\int_{\partial\Omega_2}\partial_{\nu}(-\mu_2+\theta_2)d\mathscr{H}^1_{g_0}.
	 	\end{align}
	 	Notice that provided that $\Gamma$ be given with the same orientation of $\partial\Omega_1$, we have
	 	\begin{align}\label{eq2}
	 		&2\int_{\partial\Omega_1}k_{g_0}(-\mu_1+\theta_1)d\mathscr{H}^1_{g_0}+2\int_{\partial\Omega_2}k_{g_0}(-\mu_2+\theta_2)d\mathscr{H}^1_{g_0}=2\int_{\Gamma}k_{g_0}(-\mu_1+\theta_1+\mu_2-\theta_2)\mathscr{H}^1_{g_0}.
	 	\end{align}
	 	Since $K_{g_0}=1=-\Delta_{g_0}\mu_1$ on $\Omega_1$, we have
	 	\begin{align}\label{eq3}
	 		\int_{\Omega_1}K_{g_0}(-\mu_1)d\mathrm{vol}_{g_0}=\int_{\Omega_1}\mu_1\,\Delta_{g_0}\mu_1d\mathrm{vol}_{g_0}&=-\int_{\Omega_1}|d\mu_1|_{g_0}^2d\mathrm{vol}_{g_0}+\int_{\Gamma}\mu_1\partial_{\nu}\mu_1d\mathscr{H}^1_{g_0}\nonumber\\
	 		&=-\int_{\Omega_1}|d\mu_1|_{g_0}^2d\mathrm{vol}_{g_0}+\int_{\Gamma}\left(k_{g_0}-\partial_{\nu}G_{\Omega_1}\right)\mu_1d\mathscr{H}^1_{g_0}
	 	\end{align}
	 	by \eqref{geodesic}. Therefore, we deduce that 
	 	\begin{align}\label{eq4}
	 		&2\int_{\Omega_1}K_{g_0}(-\mu_1+\theta_1)d\mathrm{vol}_{g_0}
	 		+2\int_{\partial\Omega_1}k_{g_0}(-\mu_1+\theta_1)d\mathscr{H}^1_{g_0}\nonumber\\
	 		&=-2\int_{\Omega_1}|d\mu_1|_{g_0}^2d\mathrm{vol}_{g_0}-2\int_{\Gamma}\partial_{\nu}G_{\Omega_1}\mu_1d\mathscr{H}^1_{g_0}+2\int_{\Omega_1}\theta_1 d\mathrm{vol}_{g_0}+2\int_{\Gamma}k_{g_0}\theta_1d\mathscr{H}^1_{g_0}.
	 	\end{align}
	 	Now, we have
	 	\begin{align}\label{eq5}
	 		&\int_{\Omega_1}|d(-\mu_1+\theta_1)|_{g_0}^2d\mathrm{vol}_{g_0}=\int_{\Omega_1}|d(-\mu_1+\psi\circ f_1^{-1})|^2_{g_0}d\mathrm{vol}_{g_0}\nonumber\\
	 		&=\int_{\Omega_1}|d\mu_1|_{g_0}^2d\mathrm{vol}_{g_0}-2\int_{\Omega_1}\s{d\mu_1}{d(\psi\circ f_1^{-1})}_{g_0}d\mathrm{vol}_{g_0}+\int_{\Omega_1}|d(\psi\circ f_1^{-1})|_{g_0}^2d\mathrm{vol}_{g_0}.
	 	\end{align}
	 	Since $-\Delta_{g_0}\mu_1=1$, we deduce that 
	 	\begin{align}\label{eq6}
	 		-2\int_{\Omega_1}\s{d\mu_1}{d(\psi\circ f^{-1})}_{g_0}d\mathrm{vol}_{g_0}&=-2\int_{\Omega_1}\s{d\mu_1}{d\theta_1}_{g_0}d\mathrm{vol}_{g_0}=2\int_{\Omega_1}\theta_1\Delta_{g_0}\mu_1d\mathrm{vol}_{g_0}-2\int_{\Gamma}\theta_1\partial_{\nu}\mu_1d\mathscr{H}^1_{g_0}\nonumber\\
	 		&=-2\int_{\Omega_1}\theta_1d\mathrm{vol}_{g_0}-2\int_{\Gamma}k_{g_0}\theta_1d\mathscr{H}^1_{g_0}+2\int_{\Gamma}\partial_{\nu}G_{\Omega_1}\theta_1d\mathscr{H}^1_{g_0}.
	 	\end{align}
	 	Now, by conformal invariance of the Dirichlet energy, we have
	 	\begin{align*}
	 		\int_{\Omega_1}|d(\psi\circ f^{-1})|^2_{g_0}d\mathrm{vol}_{g_0}=\int_{\mathbb{D}}|\D\psi|^2|dz|^2
	 	\end{align*}
	 	Since $\psi(z)=\log(2)-\log(1+|z|^2)$ and $\psi$ is real, we have
	 	\begin{align}\label{eq7}
	 		\int_{\mathbb{D}}|\D\psi|^2|dz|^2&=4\int_{\mathbb{D}}|\p{z}\psi|^2|dz|^2=4\int_{\mathbb{D}}\frac{|z|^2|dz|^2}{(1+|z|^2)^2}=4\pi\log(2)-2\pi.
	 	\end{align}
	 	Therefore, we get \eqref{eq4}, \eqref{eq5}, \eqref{eq6} and \eqref{eq7}
	 	\begin{align}\label{eq8}
	 		&\int_{\Omega_1}|d(-\mu_1+\theta_1)|_{g_0}^2d\mathrm{vol}_{g_0}+2\int_{\Omega_1}K_{g_0}(-\mu_1+\theta_1)d\mathrm{vol}_{g_0}
	 		+2\int_{\partial\Omega_1}k_{g_0}(-\mu_1+\theta_1)d\mathscr{H}^1_{g_0}\nonumber\\
	 		&=\int_{\Omega_1}|d\mu_1|_{g_0}^2d\mathrm{vol}_{g_0}-2\int_{\Omega_1}\s{d\mu_1}{d\theta_1}_{g_0}d\mathrm{vol}_{g_0}+\int_{\Omega_1}|d\theta_1|_{g_0}^2d\mathrm{vol}_{g_0}\nonumber\\
	 		&-2\int_{\Omega_1}\theta_1d\mathrm{vol}_{g_0}-2\int_{\Gamma}k_{g_0}\theta_1d\mathscr{H}^1_{g_0}+2\int_{\Gamma}\partial_{\nu}G_{\Omega_1}\theta_1d\mathscr{H}^1_{g_0}\nonumber\\
	 		&=\int_{\Omega_1}|d\mu_1|_{g_0}^2d\mathrm{vol}_{g_0}-\colorcancel{2\int_{\Omega_1}\theta_1d\mathrm{vol}_{g_0}}{red}-\colorcancel{2\int_{\Gamma}k_{g_0}\theta_1d\mathscr{H}^1_{g_0}}{blue}+2\int_{\Gamma}\partial_{\nu}G_{\Omega_1}\theta_1d\mathscr{H}^1_{g_0}+\int_{\Omega_1}|d\theta_1|_{g_0}^2d\mathrm{vol}_{g_0}\nonumber\\
	 		&-2\int_{\Omega_1}|d\mu_1|_{g_0}^2d\mathrm{vol}_{g_0}-2\int_{\Gamma}\partial_{\nu}G_{\Omega_1}\mu_1d\mathscr{H}^1_{g_0}+\colorcancel{2\int_{\Omega_1}\theta_1d\mathrm{vol}_{g_0}}{red}+\colorcancel{2\int_{\Gamma}k_{g_0}\theta_1d\mathscr{H}^1_{g_0}}{blue}\nonumber\\
	 		&=-\int_{\Omega_1}|d\mu_1|_{g_0}^2d\mathrm{vol}_{g_0}+\int_{\Omega_1}|d\theta_1|_{g_0}^2d\mathrm{vol}_{g_0}+2\int_{\Gamma}\partial_{\nu}G_{\Omega_1}(-\mu_1+\theta_1)d\mathscr{H}^1_{g_0}\nonumber\\
	 		&=-\int_{\Omega_1}|d\mu_1|_{g_0}^2d\mathrm{vol}_{g_0}+2\int_{\Gamma}\partial_{\nu}G_{\Omega_1}(-\mu_1+\theta_1)d\mathscr{H}^1_{g_0}+4\pi\log(2)-2\pi.
	 	\end{align}
	 	Now, since $\theta_1=\psi\circ f^{-1}$, and $\psi(z)=0$ for all $z\in S^1$, we have $\theta_1=0$ on $\Gamma$. Therefore, we have
	 	\begin{align}\label{eq9}
	 		2\int_{\Gamma}\partial_{\nu}G_{\Omega_1}(-\mu_1+\theta_1)d\mathscr{H}^1_{g_0}&=-2\int_{\Gamma}\partial_{\nu}G_{\Omega_1}\mu_1d\mathscr{H}^1_{g_0}.
	 	\end{align}
	 	Now, since $-\Delta_{g_0}\mu_1=1$ and $K_{g_0}=1$, we have
	 	\begin{align}\label{eq10}
	 		\int_{\Omega_1}G_{\Omega_1}K_{g_0}d\mathrm{vol}_{g_0}&=-\int_{\Omega_1}G_{\Omega_1}\Delta_{g_0}\mu_1d\mathrm{vol}_{g_0}\nonumber\\
	 		&=-\int_{\Omega_1}\mu_1\Delta_{g_0}G_{\Omega_1}d\mathrm{vol}_{g_0}-\int_{\Omega_1}\left(G_{\Omega_1}\partial_{\nu}\mu_1-\mu_1\partial_{\nu}G_{\Omega_1}\right)d\mathscr{H}^1_{g_0}\nonumber\\
	 		&=-2\pi\mu_1(p_1)+\int_{\partial\Omega_1}\mu_1\partial_{\nu}G_{\Omega_1}d\mathscr{H}^1_{g_0},
	 	\end{align}
	 	where we used the Dirichlet condition $G_{\Omega_1}=0$ on $\partial\Omega_1=\Gamma$. Therefore, \eqref{eq9}
	 	and \eqref{eq10} imply that
	 	\begin{align}\label{eq11}
	 		2\int_{\Gamma}\partial_{\nu}G_{\Omega_1}(-\mu_1+\theta_1)d\mathscr{H}^1_{g_0}=-2\int_{\Omega_1}G_{\Omega_1}K_{g_0}d\mathrm{vol}_{g_0}-4\pi\mu_1(p_1).
	 	\end{align}
	 	Gathering \eqref{eq8} and \eqref{eq11} yields
	 	\begin{align}\label{eq12}
	 		&\int_{\Omega_1}|d(-\mu_1+\theta_1)|_{g_0}^2d\mathrm{vol}_{g_0}+2\int_{\Omega_1}K_{g_0}(-\mu_1+\theta_1)d\mathrm{vol}_{g_0}
	 		+2\int_{\partial\Omega_1}k_{g_0}(-\mu_1+\theta_1)d\mathscr{H}^1_{g_0}\nonumber\\
	 		&=-\int_{\Omega_1}|d\mu_1|_{g_0}^2d\mathrm{vol}_{g_0}-2\int_{\Omega_1}G_{\Omega_1}K_{g_0}d\mathrm{vol}_{g_0}-4\pi\mu_1(p_1)+4\pi\log(2)-2\pi.
	 	\end{align}
	 	We also have
	 	\begin{align}\label{eq13}
	 		\int_{\partial\Omega_1}\partial_{\nu}\left(-\mu_1+\theta_1\right)d\mathscr{H}^1_{g_0}+\int_{\partial\Omega_2}\partial_{\nu}\left(-\mu_2+\theta_2\right)d\mathscr{H}^1_{g_0}=0.
	 	\end{align}
	 	Indeed, we have by the boundary conditions \eqref{boundary}
	 	\begin{align}\label{eq13bis}
	 	\int_{\partial \Omega_1}\partial_{\nu}\mu_1d\mathscr{H}^1_{g_0}&=\int_{\Gamma}k_{g_0}d\mathscr{H}^1_{g_0}-\int_{\partial \Omega}\partial_{\nu}G_{\Omega_j}d\mathscr{H}^1_{g_0}=\int_{\Gamma}k_{g_0}d\mathscr{H}^1_{g_0}-\int_{\Omega_1}\Delta_{g_0}G_{\Omega_1}d\mathrm{vol}_{g_0}\nonumber\\
	 	&=\int_{\Gamma}k_{g_0}d\mathrm{vol}_{g_0}-2\pi\nonumber\\
	 	\int_{\partial\Omega_2}\partial_{\nu}\mu_2d\mathscr{H}^1_{g_0}&=-\int_{\Gamma}k_{g_0}d\mathrm{vol}_{g_0}-2\pi.
	 	\end{align}
	 	We also have by the conformal invariance of the Dirichlet energy
	 	\begin{align}\label{eq13ter}
	 	\int_{\partial\Omega_1}\partial_{\nu}\theta_1d\mathscr{H}^1_{g_0}=\int_{\Omega_1}\Delta_{g_0}\theta_1d\mathrm{vol}_{g_0}=\int_{\mathbb{D}}\Delta\psi|dz|^2=\int_{S^1}\partial_{\nu}\psi\,d\mathscr{H}^1=-2\pi.
	 	\end{align}
	 	Therefore, we finally get by \eqref{eq13bis} and \eqref{eq13ter}
	 	\begin{align*}
	 	&\int_{\partial\Omega_1}\partial_{\nu}\left(-\mu_1+\theta_1\right)d\mathscr{H}^1_{g_0}+\int_{\partial\Omega_2}\partial_{\nu}\left(-\mu_2+\theta_2\right)d\mathscr{H}^1_{g_0}\\
	 	&=-\left(\int_{\Gamma}k_{g_0}d\mathrm{vol}_{g_0}-2\pi\right)-2\pi-\left(-\int_{\Gamma}k_{g_0}d\mathrm{vol}_{g_0}-2\pi\right)-2\pi=0
	 	\end{align*}
	 	which proves \eqref{eq13}.
	 	
	 	Finally, we deduce by \eqref{eq1}, \eqref{eq12} and \eqref{eq13} that
	 	\begin{align}\label{eq15}
	 		&-\pi\,I^L(\Gamma)=-\int_{\Omega_1}|d\mu_1|_{g_0}^2d\mathrm{vol}_{g_0}-2\int_{\Omega_1}G_{\Omega_1}K_{g_0}d\mathrm{vol}_{g_0}-4\pi\mu_1(p_1)+4\pi\log(2)-2\pi\nonumber\\
	 		&-\int_{\Omega_2}|d\mu_2|_{g_0}^2d\mathrm{vol}_{g_0}-2\int_{\Omega_1}G_{\Omega_2}K_{g_0}d\mathrm{vol}_{g_0}-4\pi\mu_2(p_2)+4\pi\log(2)-2\pi
	 		\nonumber\\
	 		&=-\int_{\Omega_1}|d\mu_1|_{g_0}^2d\mathrm{vol}_{g_0}-\int_{\Omega_2}|d\mu_2|_{g_0}^2d\mathrm{vol}_{g_0}-2\int_{\Omega_1}G_{\Omega_1}K_{g_0}d\mathrm{vol}_{g_0}-2\int_{\Omega_1}G_{\Omega_2}K_{g_0}d\mathrm{vol}_{g_0}\nonumber\\
	 		&-4\pi\mu_1(p_1)-4\pi\mu_2(p_2)+8\pi\log(2)-4\pi.
	 	\end{align}
	 	Recalling the identity \eqref{rho}, we finally deduce that 
	 	\begin{align}\label{new_loewner1}
	 		\pi\,I^L(\Gamma)&=\int_{\Omega_1}|d\mu_1|_{g_0}^2d\mathrm{vol}_{g_0}+\int_{\Omega_2}|d\mu_2|_{g_0}^2d\mathrm{vol}_{g_0}+2\int_{\Omega_1}G_{\Omega_1}K_{g_0}d\mathrm{vol}_{g_0}+2\int_{\Omega_1}G_{\Omega_2}K_{g_0}d\mathrm{vol}_{g_0}+4\pi\nonumber\\
	 		&+4\pi\log|\D f_1(0)|+4\pi\log|\D f_2(0)|-12\pi\log(2).
	 	\end{align}
	 	Now we introduce the functional  
	 	\begin{align}\label{new_loewner2}
	 		\mathscr{E}(\Gamma)&=\int_{\Omega_1}|d\mu_1|^2_{g_0}d\mathrm{vol}_{g_0}+\int_{\Omega_2}|d\mu_2|^2_{g_0}d\mathrm{vol}_{g_0}+2\int_{\Omega_1}G_{\Omega_1}K_{g_0}d\mathrm{vol}_{g_0}+2\int_{\Omega_2}G_{\Omega_2}K_{g_0}d\mathrm{vol}_{g_0}+4\pi.
	 	\end{align}
   We deduce that 
	 	\begin{align}\label{new_loewner3}
	 		I^L(\Gamma)=\frac{1}{\pi}\mathscr{E}(\Gamma)+4\log|\D f_1(0)|+4\log|\D f_2(0)|-12\log(2).
	 	\end{align}
	 	This concludes the proof of the theorem.
	 \end{proof}
 
	 \begin{rem}\label{hemisphere}
	 	{We check} that {equality \eqref{new_loewner3}} holds for {the equator $S^1$}. {Using the definition \eqref{defs1} with the conformal maps $f, g$ being the identity maps, we see that $I^L(S^1)$} vanishes. 
	 	
	 	Let us first check that
	 	\begin{align*}
	 		\mathscr{E}(S^1)=0
	 	\end{align*}
	 	with the marked points $p_1 = S = (0,0,-1)$ and $p_2 = N = (0,0,1)$.
	 	This identity justifies the term $4\pi$ in the definition of $\mathscr{E}$ as we remarked earlier. 
   
	 For this,	since $K_{g_0}=1$ on {$\Omega_1 = S_-^2$,} 
	 	after making a stereographic projection {$\pi : S^2 \setminus \{N \} \to \mathbb C$ sending $S$ to $0$}, we find
	 		\begin{align}\label{optimal2}
	 		\int_{S^2_{-}} G_{S^2_{-}}K_{g_0}d\mathrm{vol}_{g_0}&=\int_{\mathbb{D}}G_{\mathbb{D}}(z)\frac{4|dz|^2}{(1+|z|^2)^2}=\int_{\mathbb{D}}\frac{4\log|z|}{(1+|z|^2)^2}|dz|^2=8\pi\int_{0}^{1}\frac{r\log r}{(1+r^2)^2}dr\nonumber\\
	 		&=8\pi\left[-\frac{1}{2(1+r^2)}\log(r)+\frac{1}{2}\log(r)-\frac{1}{4}\log(1+r^2)\right]_0^1=-2\pi\log(2).
	 	\end{align}

{We take $f_1 = \pi^{-1}|_{\mathbb D}$ which is consistent with $f_1(0) = p_1 = S$. By \eqref{geodesic} we have $\partial_\nu \mu = -1$ on $S^1$ and $- \Delta_{g_0}\mu=1$ in $S^2_-$ which translates to 
\begin{align*}
	 		-\Delta\mu (z) =\frac{4}{(1+|z|^2)^2} \qquad \text{in}\;\,\mathbb{D}.
	 	\end{align*}
	 	We deduce by a direct verification that
	 	$\mu(z)=-\log(1+|z|^2)$. (This is easy to guess since by \eqref{eq:change_factor_mu}, $\mu$ can be computed from the conformal factor of $f_1$.)
}
Therefore, we have by the conformal invariance of the Dirichlet energy 
	 	\begin{align}\label{optimal3}
	 		\int_{S^2_{-}}\left|\omega-\ast dG_{S^2_{-}}\right|^2_{g_0}d\mathrm{vol}_{g_0}&=\int_{S^2_{-}}|d\mu|_{g_0}^2d\mathrm{vol}_{g_0}=\int_{\mathbb{D}}|\D \mu(x)|^2dx=\int_{\mathbb{D}}\left|\frac{2x}{1+|x|^2}\right|^2dx
	 		=8\pi\int_{0}^1\frac{r^3}{(1+r^2)^2}dr\nonumber\\
	 		&=8\pi\int_{0}^1\left(\frac{r}{1+r^2}-\frac{r}{(1+r^2)^2}\right)dr
	 		=8\pi\left[\frac{1}{2}\log(1+r^2)+\frac{1}{2}\frac{1}{1+r^2}\right]_0^1\nonumber\\
	 		&=8\pi\left(\frac{1}{2}\log(2)-\frac{1}{4}\right)
	 		=4\pi\log(2)-2\pi.
	 	\end{align}
	 	
	 	 	Finally, by \eqref{optimal2} and \eqref{optimal3}, we have
	 	\begin{align*}
	 		\int_{S^2_{-}}|\omega-\ast\, dG_{S^2_{-}}|_{g_0}^2d\mathrm{vol}_{g_0}+2\int_{S^2_{-}}G_{S^2_{-}}\,K_{g_0}d\mathrm{vol}_{g_0}+2\pi
	 		&=\left(4\pi\log(2)-2\pi\right)-4\pi\log(2)+2\pi=0.
	 	\end{align*}
{Applying the same computation to $\Omega_2 = S^2_+$ with $f_2 = - f_1 : \mathbb D \to S^2_+$ which is consistent with the choice $p_2 = N = f_2 (0)$, we obtain the claimed identity $\mathscr{E}(S^1)=0$.

	 	Now we show that 
	 	\begin{align}\label{second_part}
	 		4\log|\D f_1(0)|+4\log|\D f_2(0)|
	 		-12\log(2) =0.
	 	\end{align}
	 	Since the inverse stereographic projection is given by
	 	\begin{align*}
	 		f_1 (z) = \pi^{-1}(z
	 		)=\left(\frac{2\,\Re(z)}{1+|z|^2},\frac{2\,\Im(z)}{1+|z|^2},\frac{-1+|z|^2}{1+|z|^2}\right),
	 	\end{align*}
	 	we compute directly that 
	 	\begin{align*}
	 		|\D f_1(0)| = |\D f_2(0)| =2\sqrt{2}
	 	\end{align*}
	 	which concludes the proof of \eqref{second_part} and shows the identity \eqref{new_loewner3} for the circle $S^1$.}
 \end{rem}

\section{Construction of Harmonic Moving Frames for Weil-Petersson Curves}\label{section7}

  In the previous section, we showed that in the case of smooth curves, the Loewner energy was equal to a renormalised Dirichlet energy of a specific harmonic moving frame. In this section, we will directly  construct harmonic moving frames satisfying appropriate boundary conditions for arbitrary Weil-Petersson quasicircles. In the next section, we will show that Theorem \ref{wp10} holds for non-smooth curves. 
  
	 Before stating the main theorem of this section, recall an easy lemma on harmonic vector fields.
	 
	 \begin{lemme}\label{harmonic}
	 Let $\Sigma\subset \R^3$ be a smooth surface, $\n:\Sigma\rightarrow S^2$ its unit normal, and $g=\iota^{\ast}g_{\R^3}$ be the induced metric. Assume that $\vec{u}:\Sigma\rightarrow S^2$ is a smooth critical point of the Dirichlet energy amongst $S^2$-valued maps such that $\s{{\vec u}}{\n}=0$. 
	 Then $\vec{u}$ satisfies the following Euler-Lagrange equation:
	 \begin{align}\label{el}
	 -\Delta_g\vec{u}=|d\vec{u}|_g^2\vec{u}+\left(\s{d\vec{u}}{d\n}_g+\s{\vec{u}}{\Delta_g\n}\right)\n.
	 \end{align}
	 \end{lemme}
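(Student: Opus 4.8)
The plan is to derive \eqref{el} as the Euler--Lagrange equation of a constrained variational problem: impose the two pointwise constraints $|\vec{u}|^2=1$ and $\s{\vec{u}}{\vec{n}}=0$ by Lagrange multipliers, conclude that $-\Delta_g\vec{u}$ lies pointwise in $\mathrm{span}\{\vec{u},\vec{n}\}$, and then pin down the two multiplier functions from the constraints themselves.

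First I would describe the admissible variations. A family $t\mapsto\vec{u}_t$ of competitors with $\vec{u}_0=\vec{u}$ must satisfy $|\vec{u}_t|^2\equiv1$ and $\s{\vec{u}_t}{\vec{n}}\equiv0$, where $\vec{n}$ is the \emph{fixed} normal of $\Sigma$ and does not depend on $t$. Differentiating at $t=0$, the infinitesimal variation $\vec{w}=\partial_t\vec{u}_t|_{t=0}$ satisfies $\s{\vec{w}}{\vec{u}}=0$ and $\s{\vec{w}}{\vec{n}}=0$ at every point. Since $\s{\vec{u}}{\vec{n}}=0$ and $|\vec{u}|=1$, the triple $\{\vec{u},\ \vec{n}\times\vec{u},\ \vec{n}\}$ is a pointwise orthonormal frame of $\R^3$ along $\Sigma$, so the admissible $\vec{w}$ are exactly the maps $\vec{w}=\varphi\,(\vec{n}\times\vec{u})$ with $\varphi\in C^\infty_c(\Sigma)$.

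Criticality of the Dirichlet energy then reads $\int_\Sigma\s{d\vec{u}}{d\vec{w}}_g\,d\mathrm{vol}_g=0$ for every such $\vec{w}$; integrating by parts (the variations being supported in the interior, so that no boundary term appears) gives $\int_\Sigma\s{\Delta_g\vec{u}}{\vec{w}}\,d\mathrm{vol}_g=0$ for all sections $\vec{w}$ of the rank-one bundle $(\mathrm{span}\{\vec{u},\vec{n}\})^{\perp}$, whence $-\Delta_g\vec{u}=\lambda\vec{u}+\rho\vec{n}$ for some scalar functions $\lambda,\rho$. To compute $\lambda$ I pair with $\vec{u}$ and use $0=\tfrac12\Delta_g|\vec{u}|^2=\s{\vec{u}}{\Delta_g\vec{u}}+|d\vec{u}|_g^2$, so $\lambda=\s{-\Delta_g\vec{u}}{\vec{u}}=|d\vec{u}|_g^2$. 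To compute $\rho$ I pair with $\vec{n}$ and exploit the other constraint: applying the Leibniz rule for the Laplace--Beltrami operator to $\s{\vec{u}}{\vec{n}}\equiv0$ yields a relation between $\s{\Delta_g\vec{u}}{\vec{n}}$, $\s{d\vec{u}}{d\vec{n}}_g$ and $\s{\vec{u}}{\Delta_g\vec{n}}$, from which $\rho=\s{-\Delta_g\vec{u}}{\vec{n}}$ is read off in terms of the latter two. Substituting $\lambda$ and $\rho$ back gives \eqref{el}.

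\textbf{Main obstacle.} The only step requiring genuine care is the evaluation of the normal coefficient $\rho$: one must differentiate the scalar identity $\s{\vec{u}}{\vec{n}}=0$ \emph{twice} and keep the cross-term bookkeeping in the product rule for $\Delta_g$ exactly straight. Everything else is the standard Lagrange-multiplier computation for maps into a submanifold, here the ``equatorial circle bundle'' $\{\vec{v}\in S^2:\s{\vec{v}}{\vec{n}}=0\}$ over $\Sigma$. If a more geometric form is wanted, the Weingarten relation $\partial_i\vec{n}=-S(\partial_i)$ (with $S$ the shape operator) can be used afterwards to rewrite $\s{d\vec{u}}{d\vec{n}}_g$ as a contraction of $S$ with $\D\vec{u}$, and $\s{\vec{u}}{\Delta_g\vec{n}}$ can be analysed using $\s{\Delta_g\vec{n}}{\vec{n}}=-|d\vec{n}|_g^2$.
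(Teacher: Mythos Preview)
Your approach is correct and is precisely what the paper does: it simply cites Lemme~1.4.10 of H\'elein and says to take variations $\vec{X}$ satisfying $\s{\vec{X}}{\n}=0$, which amounts to your Lagrange-multiplier argument followed by identifying the two coefficients from the constraints $|\vec{u}|^2=1$ and $\s{\vec{u}}{\n}=0$. Your Leibniz computation for the normal coefficient gives $\rho=2\s{d\vec{u}}{d\n}_g+\s{\vec{u}}{\Delta_g\n}$, and indeed the paper uses this factor~$2$ when it actually applies the lemma (see \eqref{moving_frame} and the line below \eqref{moving_frame_2}), so the coefficient $1$ in the displayed statement \eqref{el} is a typo your careful bookkeeping would catch.
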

	 \begin{proof}
	 We proceed as in \cite{helein} in Lemme ($1.4.10$), taking variations $\vec{X}$ that also satisfy $\s{\vec{X}}{\n}=0$. 
	 \end{proof}

 The following result is the same as Theorem~\ref{wp10}, but for a general Weil-Petersson quasicircle.
	 \begin{theorem}\label{key2}
	 	Let $\Gamma\subset S^2$ a Weil-Petersson quasicircle and let $\Omega_1,\Omega_2$ {be} the two open connected components of $S^2\setminus\Gamma$. 
	 	{For} $j=1,2$ {and} for all $p_j\in\Omega_j$, there  {exists a} harmonic moving frame $(\e_j,\f_j):\Omega_j\setminus\ens{p_j}\rightarrow U\Omega_j\times U\Omega_j$ such that the Cartan form $\omega_j=\s{\e_j}{d\f_j}$ admits the decomposition
	 	\begin{align}\label{cartan}
	 	  	\omega_j=\ast\,d\left(G_{\Omega_j}+\mu_j\right),
		\end{align}
	 	where $G_{\Omega_j}=G_{\Omega_j,p_j}:\Omega_j\setminus\ens{p_j}\rightarrow \R$ is the Green's function of the Laplacian $\Delta_{g_0}$ on $\Omega_j$ with Dirichlet boundary condition and singularity $p_j\in\Omega_j$, and $\mu_j\in C^{\infty}(\Omega_j)$ satisfies
	 	\begin{align}\label{systemu}
	 	    \left\{\begin{alignedat}{2}
	 	         -\Delta_{g_0}\mu_j&=1\qquad&&\text{in}\;\,\Omega_j\\
	 	         \partial_{\nu}\mu_j&=k_{g_0}-\partial_{\nu}G_{\Omega_j}\qquad&&\text{on}\;\,\partial\Omega_j,
	 	    \end{alignedat} \right.
	 	\end{align}
	 	where $k_{g_0}$ is the geodesic curvature on $\Gamma=\partial\Omega_j$. 
	 \end{theorem}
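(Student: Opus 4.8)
The strategy is to bypass the Ginzburg--Landau construction of \cite{lauromain} and instead build the moving frame \emph{directly from a conformal map}, reversing the Frobenius argument as indicated in Remark~\ref{rem:existence_frame}; the merely chord-arc nature of $\Gamma$ then enters only through trace theory. Fix $j\in\{1,2\}$ and $p_j\in\Omega_j$. Since $\Omega_j\subset S^2$ is a Jordan domain, the Riemann mapping theorem yields a conformal map $f_j\colon\mathbb{D}\to\Omega_j$ with $f_j(0)=p_j$, unique up to a rotation of $\mathbb{D}$; because $\Gamma$ is a quasicircle, $f_j$ extends to a homeomorphism $\overline{\mathbb{D}}\to\overline{\Omega_j}$ (Carathéodory), and because $\Gamma$ is a Weil--Petersson quasicircle, condition~(2) of Theorem~\ref{thm:equi_WP} gives $\lambda\in W^{1,2}(\mathbb{D})$, where $f_j^{\ast}g_0=e^{2\lambda}|dz|^2$ (in a stereographic chart the bounded smooth conformal factor of $g_0$ contributes only a finite Dirichlet integral). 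I then \emph{define} $(\e_j,\f_j)$ on $\Omega_j\setminus\{p_j\}$ by \eqref{moving_frames}, i.e. $\e_j\circ f_j=e^{-\lambda}\,r^{-1}\partial_\theta f_j$ and $\f_j\circ f_j=e^{-\lambda}\,\partial_r f_j$ in polar coordinates $z=re^{i\theta}$: conformality of $f_j$ forces $(\e_j,\f_j)$ to be an orthonormal frame of $U\Omega_j$, and since $r^{-1}\partial_\theta f_j$ and $\partial_r f_j$ are single-valued on $\mathbb{D}\setminus\{0\}$ and $f_j$ is holomorphic with non-vanishing derivative, $(\e_j,\f_j)\in C^{\infty}(\Omega_j\setminus\{p_j\})$ (the point $p_j$ is excised because of the singularity of $\theta$ at the origin). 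One also checks, using the equivalences (4)--(5) of Theorem~\ref{thm:equi_WP}, that $\e_j$ attains the unit tangent $\tau$ on $\Gamma$.

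Next I identify the Green's function and $\mu_j$. By conformal invariance of the Green's function, $G_{\Omega_j}\circ f_j=G_{\mathbb{D},0}=\log|z|$. Set $\mu_j:=\lambda\circ f_j^{-1}$; this is independent of the rotation ambiguity in $f_j$ (as $\lambda$ transforms equivariantly), lies in $C^{\infty}(\Omega_j)\cap W^{1,2}(\Omega_j)$, and satisfies $-\Delta_{g_0}\mu_j=1$ on $\Omega_j$, which is nothing but the Gauss-curvature equation $-\Delta_{\mathrm{flat}}\lambda=K_{g_0}e^{2\lambda}=e^{2\lambda}$ for the conformal factor of $f_j$ (valid classically on $\mathbb{D}$, since in a stereographic chart $\lambda$ is the sum of a harmonic function and the pullback of the spherical conformal factor). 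The Neumann condition $\partial_\nu\mu_j=k_{g_0}-\partial_\nu G_{\Omega_j}$ on $\Gamma$ is the conformal transformation law of geodesic curvature under $f_j^{\ast}g_0=e^{2\lambda}|dz|^2$: the unit circle has flat geodesic curvature $1$, so $e^{\lambda}\,(k_{g_0}\circ f_j)=1+\partial_\nu\lambda$ with $\partial_\nu$ the outward euclidean normal derivative on $S^1$, while $\partial_\nu(G_{\Omega_j}\circ f_j)=\partial_\nu\log|z|=1$ and $\partial_\nu(\mu_j\circ f_j)=\partial_\nu\lambda$; subtracting, and converting to the $g_0$-normal derivative (which pulls back to $e^{-\lambda}\partial_\nu$), gives the identity. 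For a general Weil--Petersson $\Gamma$ this is read in the weak form $\int_{\Omega_j}\langle d\mu_j,d\phi\rangle_{g_0}\,d\mathrm{vol}_{g_0}=\int_{\Omega_j}\phi\,d\mathrm{vol}_{g_0}+\langle k_{g_0}-\partial_\nu G_{\Omega_j},\phi\rangle$, the last pairing being $H^{-1/2}(\Gamma)$--$H^{1/2}(\Gamma)$ duality; pulling back to $\mathbb{D}$ (conformal invariance of the Dirichlet integral, $\lambda\in W^{1,2}(\mathbb{D})$ having an $H^{1/2}(S^1)$ trace) reduces it to the distributional notion of geodesic curvature of Section~\ref{geodesicprop}, which is available since $\tau\in H^{1/2}(\Gamma)$ for Weil--Petersson quasicircles.

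It remains to verify \eqref{cartan} and harmonicity. Pulling $\omega_j=\langle\e_j,d\f_j\rangle$ back by $f_j$ and using conformality of $f_j$, that $|f_j|\equiv1$, and the harmonic-map equation $\Delta_{\mathrm{flat}}f_j=-2e^{2\lambda}f_j$ of the conformal map $f_j\colon\mathbb{D}\to S^2$, a direct computation gives $f_j^{\ast}\omega_j=-r^{-1}(\partial_\theta\lambda)\,dr+(1+r\,\partial_r\lambda)\,d\theta=\ast_{\mathrm{flat}}\,d(\log|z|+\lambda)$; since $G_{\Omega_j}\circ f_j=\log|z|$, $\mu_j\circ f_j=\lambda$, and the Hodge star on $1$-forms is conformally invariant in dimension two, this equals $f_j^{\ast}\bigl(\ast\,d(G_{\Omega_j}+\mu_j)\bigr)$, proving \eqref{cartan} (the appearance of $d\theta=\ast_{\mathrm{flat}}d\log|z|$, rather than just $\ast d\lambda$, reflects that the frame is built from polar coordinates, and the winding of $\omega_j$ around $p_j$ is automatically that of $\ast dG_{\Omega_j}$). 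Finally, by Lemma~\ref{harmonic} the Euler--Lagrange equation for $\e_j$ (and symmetrically $\f_j$) as a critical point of the Dirichlet energy among $S^2$-valued maps orthogonal to the normal $\n$ reduces, after projecting onto the frame directions and using $\Delta_{g_0}\n=-2\n$, to the single condition $d^{\ast}\omega_j=0$ on $\Omega_j\setminus\{p_j\}$; since $\ast\omega_j=-d(G_{\Omega_j}+\mu_j)$ and $\ast\ast=-\mathrm{id}$ on $1$-forms, $d^{\ast}\omega_j=-\ast d\ast\omega_j=\ast\, d\,d(G_{\Omega_j}+\mu_j)=0$ there. Hence $(\e_j,\f_j)$ is a harmonic moving frame with the asserted properties.

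The interior identities above are the standard conformal-geometry computations (Gauss-curvature equation, transformation of geodesic curvature, conformal invariance of the Hodge star and of the Dirichlet integral), and are essentially routine. \textbf{The main obstacle is the boundary analysis for non-smooth Weil--Petersson curves}: one must (i) invoke Carathéodory and the chord-arc trace theory of Jonsson--Wallin to extend $f_j$ and interpret all boundary traces in $W^{1,2}/H^{1/2}$; (ii) give rigorous meaning to the geodesic curvature $k_{g_0}$, which for a Weil--Petersson quasicircle is only an element of $H^{-1/2}(\Gamma)$, and to the normal derivatives of the merely $W^{1,2}$ functions $\mu_j$ and $G_{\Omega_j}$, and verify the Neumann condition in \eqref{systemu} in the appropriate distributional sense, which is precisely the content of Section~\ref{geodesicprop}; and (iii) establish that the boundary value $\e_j=\tau\in H^{1/2}(\Gamma)$ is attained. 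A final bookkeeping point, used in Theorem~\ref{wp1} and its Remark, is that all conclusions are independent of the rotation ambiguity in $f_j$ by equivariance of $\lambda$, which moreover gives uniqueness of $\mu_j$ and of $(\e_j,\f_j)$ up to that rotation.
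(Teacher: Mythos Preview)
Your strategy coincides with the paper's: both construct the frame directly from a conformal map $f_j:\mathbb{D}\to\Omega_j$ via the relations \eqref{moving_frames} and then verify in turn the interior equation for $\mu_j$, the Neumann boundary condition, the Cartan decomposition \eqref{cartan}, and harmonicity. The difference is purely in execution. The paper fixes a stereographic chart, writes $\e_j,\f_j,\mu_j$ as explicit rational expressions in the planar univalent map $f=\pi\circ f_j$ (see \eqref{frame}, \eqref{defmu}), and checks each item by direct coordinate computation (e.g.\ \eqref{kg0} for the boundary condition and \eqref{np1}--\eqref{step2} for harmonicity), whereas you appeal to the Gauss--curvature equation, the conformal transformation law of geodesic curvature, and co-closedness of $\omega_j$. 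Your packaging is cleaner; the paper's has the virtue of producing the exact expressions reused later in the $S_3$-formula (Definition~\ref{def:s3}).

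There is, however, a genuine imprecision in your harmonicity step. Projecting the Euler--Lagrange equation of Lemma~\ref{harmonic} onto $\f_j$ gives $\langle\Delta_{g_0}\e_j,\f_j\rangle=0$, and one computes
\[
\langle\Delta_{g_0}\e_j,\f_j\rangle=-d^{\ast}_{g_0}\omega_j-\langle d\e_j,d\f_j\rangle_{g_0},\qquad
\langle\Delta_{g_0}\f_j,\e_j\rangle=d^{\ast}_{g_0}\omega_j-\langle d\e_j,d\f_j\rangle_{g_0}.
\]
So the reduction of harmonicity of the \emph{pair} $(\e_j,\f_j)$ to the single condition $d^{\ast}\omega_j=0$ also requires $\langle d\e_j,d\f_j\rangle_{g_0}=0$. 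This is not a formal consequence of $\Delta_{g_0}\n=-2\n$; it holds because $S^2\subset\R^3$ is totally umbilic (shape operator $=\mathrm{id}$), so that the $\n$-components of $d\e_j$ and $d\f_j$ are $-\e_j^{\flat}$ and $-\f_j^{\flat}$, hence $g_0$-orthogonal. Once you insert this one-line observation your argument is complete. The paper avoids the issue by computing $\langle\Delta\varphi,\varphi\rangle$ and $\langle\Delta\varphi,\bar\varphi\rangle$ explicitly, and then in Theorem~\ref{5.2} identifies harmonicity of the frame with harmonicity of the trivialised $S^1$-valued map.
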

 \begin{rem}
     The Neumann condition for $\mu_j$ ($1\leq j\leq 2$) is understood in the sense of distributions, since the geodesic curvature is only in $H^{-1/2}(\Gamma)$ in general (see the appendix for more details).
 \end{rem}

	 \begin{proof}
        Rather than using the moving frame that comes from a Ginzburg-Landau type minimisation as in \cite{lauromain}—that would have had to be carried in the geometric setting of domains of $S^2$—we directly use the uniformisation theorem and the geometric formula of \cite{yilinvention} (that does not require any regularity on the curve $\Gamma$)  to construct the relevant moving frame.
	 	We now construct the moving frame on $\Omega_1$. The construction for $\Omega_2$ is similar.
	 	
	 	\textbf{Step 1.} Definition of $(\e_{1},\f_{1})$ and $\mu_{1}$.
	 
	 	Let $\pi:S^2\setminus\ens{N}\rightarrow \C$ be the standard stereographic projection and assume without loss of generality that $N\in \Omega_2$. Let $\Omega=\pi(\Omega_1)\subset \C$ be the image domain and $\gamma=\pi(\Gamma)\subset\C$ be the image curve. Thanks to the Uniformisation Theorem, there exists {a} univalent holomorphic map $f:\mathbb{D}\rightarrow \Omega$ {such that $f(0) = \pi (p_1)$.} 

	 	Now, let $f_1=\pi^{-1}\circ f:\mathbb{D}\rightarrow \Omega_1$. Notice that  $f_1 (0) = p_1$. 
        Explicitly, we have
        \begin{align*}
            f_1(z)=\pi^{-1}(f(z))=\left(\frac{2\,\Re(f(z))}{1+|f(z)|^2},\frac{2\,\Im(f(z))}{1+|f(z)|^2},\frac{-1+|f(z)|^2}{1+|f(z)|^2}\right).
        \end{align*}
        A direct computation show that
	 	\begin{align*}
	 		\p{z}f_1=f'\left(\frac{(1-\bar{f}^2)}{(1+|f|^2)^2},\frac{-i(1+\bar{f}^2)}{(1+|f|^2)^2},\frac{2\bar{f}}{(1+|f|^2)^2}\right).
	 	\end{align*}
	 	Now, by analogy with the construction in Section \ref{zeta} (see also \cite{lauromain}, Proposition $5.1$), define $\mu_1:\Omega_1\rightarrow \R$ and $\e_1:\Omega_1\rightarrow U S^2$ and $\f_1:\Omega_1\rightarrow U S^2$ by
	 	\begin{align*}
	 	\left\{\begin{alignedat}{1}
	 		\p{r}f_1&=e^{\mu_1\circ f_1}\f_1\circ f_1\\
	 		\frac{1}{r}\p{\theta}f_1&=e^{\mu_1\circ f_1}\e_1\circ f_1.
	 		\end{alignedat}\right.
	 	\end{align*}
	 	Then, we have {from direct computations} 
	 	\begin{align*}
	 		e^{2\mu_1\circ f_1}=|\p{r}f_1|^2=\frac{1}{r^2}|\p{\theta}f_1|^2=2|\p{z}f_1|^2=\frac{4|f'(z)|^2}{(1+|f(z)|^2)^2}.
	 	\end{align*}
	 	Therefore, we deduce if $\mu=\mu_1\circ f_1$ that
	 	\begin{align}\label{defmu}
	 	\mu(z)=\log|f'(z)|-\log\left(1+|f(z)|^2\right)+\log(2).
	 	\end{align}
Since $\p{z}=\dfrac{1}{2}(\p{x}-i\,\p{y})$, we have
	 	\begin{align*}
	 	\left\{\begin{alignedat}{1}
	 		\p{r}f_1&=\cos(\theta)\p{x}f_1+\sin(\theta
	 		)\p{y}f_1=\Re\left(\frac{z}{|z|}\right)\Re(\p{z}f_1)-\Im\left(\frac{z}{|z|}\right)\Im\left(\p{z}f_1\right)\\
	 		\frac{1}{r}\p{\theta}f_1&=-\sin(\theta)\p{x}f_1+\cos(\theta)\p{y}f_1=-\Im\left(\frac{z}{|z|}\right)\Re(\p{z}f_1)-\Re\left(\frac{z}{|z|}\right)\Im\left(\p{z}f_1\right).
	 		\end{alignedat}\right.
	 	\end{align*}
	 	By the elementary identities for all $a,b\in \C$
	 	\begin{align*}
       \left\{\begin{alignedat}{1}
	 		\Re(a)\Re(b)+\Im(a)\Im(b)&=\Re(ab)\\
	 		\Re(a)\Im(b)+\Im(a)\Re(b)&=\Im(ab),
            \end{alignedat}\right.
	 	\end{align*}
	 	we deduce that 
	 	\begin{align*}
	 	\left\{\begin{alignedat}{1}
	 		\p{r}f_1&=\Re\left(\frac{z}{|z|}\p{z}f_1\right)
	 		=\Re\left(\frac{\z}{|z|}\bar{f'(z)}\left(\frac{(1-f(z)^2)}{(1+|f(z)|^2)^2},\frac{i(1+f(z)^2)}{(1+|f(z)|^2)^2},\frac{2f(z)}{(1+|f(z)|^2)^2}\right)\right)\\
	 		\frac{1}{r}\p{\theta}f_1&=-\Im\left(\frac{z}{|z|}\p{z}f_1\right)
	 		=\Im\left(\frac{\z}{|z|}\bar{f'(z)}\left(\frac{(1-f(z)^2)}{(1+|f(z)|^2)^2},\frac{i(1+f(z)^2)}{(1+|f(z)|^2)^2},\frac{2f(z)}{(1+|f(z)|^2)^2}\right)\right).
	 		\end{alignedat}\right.
	 	\end{align*}
	 	More generally, if $\varphi:\C\rightarrow \C$ is a smooth complex function, we have 
	   \begin{align}\label{pthete}
	 	 	\partial_{\theta}\varphi=-\Im\left(z\right)\left(\p{z}+\p{\z}\right)\varphi+\Re\left({z}\right)i\left(\p{z}-\p{\z}\right)\varphi=i\left(z\,\p{z}\varphi-\z\,\p{\z}\varphi\right).
	 	\end{align}
	 	Since
	 	\begin{align*}
	 		|\p{z}f_1|=\frac{1}{r}|\p{\theta}f_1|=\frac{2|f'(z)|}{1+|f(z)|^2},
	 	\end{align*}
	 	we deduce that 
	 	\begin{align}\label{frame}
	 	\left\{\begin{alignedat}{1}
	 		\f_1\circ f_1&=\Re\left(\bar{\frac{zf'(z)}{|z f'(z)|}}\left(\frac{(1-f(z)^2)}{(1+|f(z)|^2)},\frac{i(1+f(z)^2)}{(1+|f(z)|^2)},\frac{2f(z)}{(1+|f(z)|^2)}\right)\right)\\
	 		\e_1\circ f_1&=\Im\left(\bar{\frac{zf'(z)}{|z f'(z)|}}\left(\frac{(1-f(z)^2)}{(1+|f(z)|^2)},\frac{i(1+f(z)^2)}{(1+|f(z)|^2)},\frac{2f(z)}{(1+|f(z)|^2)}\right)\right).
	 		\end{alignedat}\right. 
	 	\end{align}
	 	Notice that 
	 	\begin{align*}
	 		F(z)=\left((1-f(z)^2),i(1+f(z)^2),2f(z)\right)
	 	\end{align*}
	 	is a holomorphic null vector, \emph{i.e.} $\s{F(z)}{F(z)}=0$, so we see directly since $|\e_1|=|\f_1|=1$ that 
	 	\begin{align*}
	 		\s{\e_1}{\f_1}=0.
	 	\end{align*}
	 	
	 	\textbf{Step 2.} Verification of the system \eqref{systemu}. 

        \textbf{Part 1.} Equation on $\Omega_1$ for $\mu_1$.
	 	
	 	  Since $\mu=\mu_1\circ f_1$, the equation $-\Delta_{g_0}\mu_1=1$ is equivalent to 
	 	\begin{align}\label{new_liouville}
	 		-\Delta\mu=e^{2\mu}
	 	\end{align}
        Thanks to the explicit expression in \eqref{defmu}, and by harmonicity of $\log|f'|$, we have
	 	\begin{align*}
	 	\Delta\mu=4\,\p{\z}\left(\frac{f'(z)\bar{f'(z)}}{1+|f(z)|^2}\right)=4\left(\frac{|f'(z)|^2}{1+|f(z)|^2}-\frac{|f'(z)|^2|f(z)|^2}{(1+|f(z)|^2)^2}\right)=\frac{4|f'(z)|^2}{(1+|f(z)|^2)^2}=e^{2\mu}. 
	 	\end{align*}
	 	Recalling that 
	 		\begin{align*}
	 		g_{\widehat{\C}} =\frac{4|dz|^2}{(1+|z|^2)^2}=(\pi^{-1})^{\ast}g_{0},
	 	\end{align*}
	 	we deduce that 
	\begin{align*}
	 		\frac{4|f'(z)|^2|dz|^2}{(1+|f(z)|^2)^2}=f^{\ast} g_{\widehat{\C}} = f^{\ast}((\pi^{-1})^{\ast} g_{0})=(\pi^{-1}\circ f)^{\ast} g_{0} = f_1^{\ast} g_{0}.
	 	\end{align*}
	 	Therefore, \eqref{new_liouville} can be rewritten as
	 	\begin{align*}
	 		-\Delta_{f_1^{\ast}g_{0}}(\mu_1\circ f_1)=1
	 	\end{align*}
	 	or by conformal invariance of the Dirichlet energy
	 	\begin{align}\label{liouville1}
	 		-\Delta_{g_{0}}\mu_1=1.
	 	\end{align}

        \textbf{Part 2.} Boundary conditions.
	 	
        If ${h}:\C\rightarrow \R$ is a smooth function, we have
	 	\begin{align*}
	 	    \partial_{\nu}{h}& =\frac{x}{\sqrt{x^2+y^2}}\p{x}{h}+\frac{y}{\sqrt{x^2+y^2}}\p{y}{h}=\frac{\Re(z)}{|z|}\left(\p{z}+\p{\z}\right){h} +\frac{\Im(z)}{|z|}i\left(\p{z}-\p{\z}\right){h}
	 	    =2\,\Re\left(\frac{z}{|z|}\p{z}{h}\right).
	 	\end{align*}
	 	This implies since $\mu(z)=\log|f'(z)|-\log(1+|f(z)|^2)+\log(2)$ by \eqref{defmu} that
	 	\begin{align*}
	 	\p{z}\mu(z)=\frac{1}{2}\frac{f''(z)}{f'(z)}-\frac{f'(z)}{f(z)}\frac{|f(z)|^2}{1+|f(z)|^2},
	 	\end{align*}
	 	and
	 	\begin{align*}
	 	\partial_{\nu}\mu=\Re\left(z\frac{f''(z)}{f'(z)}-2z\frac{f'(z)}{f(z)}\frac{|f(z)|^2}{1+|f(z)|^2}\right)
	 	\end{align*}
	 	{on $\partial \mathbb D$ in the distributional sense. We will comment on it in Remark~\ref{remTheoA}.}
	 	Recall that the geodesic curvature on {$\partial \Omega_1$} 
	 	is given (see \cite{changeodesic}) by 
	 	\begin{align}\label{eq:def_geodesic_curvature}
	 	k_{g_0}=\s{\e_1}{\partial_{\theta}\f_1}.
	 	\end{align}
	 	{From \eqref{frame} it is natural to define}  
	 	\begin{align*}
	 		\varphi(z)=\bar{\frac{zf'(z)}{|z f'(z)|}}\left(\frac{(1-f(z)^2)}{(1+|f(z)|^2)},\frac{i(1+f(z)^2)}{(1+|f(z)|^2)},\frac{2f(z)}{(1+|f(z)|^2)}\right)=\chi(z)\psi(z),
	    \end{align*}
	    where
	    \begin{align}\label{defchipsi}
	    \left\{\begin{alignedat}{1}
	    \chi(z)&=\frac{\bar{zf'(z)}}{|zf'(z)|}=\exp\left(\frac{1}{2}\log\left(\bar{zf'(z)}\right)-\frac{1}{2}\log\left(zf'(z)\right)\right)\\
	    \psi(z)&=\left(\frac{1-f(z)^2}{1+|f(z)|^2},\frac{i(1+f(z)^2)}{1+|f(z)|^2},\frac{2f(z)}{1+|f(z)|^2}\right),
	    \end{alignedat}\right.
	    \end{align}
	    {so that $\f_1 \circ f_1 = \Re (\varphi)$ and $\e_1 \circ f_1 = \Im (\varphi)$.}
	    Then, we compute
	    \begin{align}\label{idchi}
	    \left\{\begin{alignedat}{1}
	    \p{z}\chi&=-\frac{1}{2}\left(\frac{f''(z)}{f'(z)}+\frac{1}{z}\right)\chi\\
	    \p{\z}\chi&=\frac{1}{2}\bar{\left(\frac{f''(z)}{f'(z)}+\frac{1}{z}\right)}\chi.
	    \end{alignedat}\right.
	    \end{align}
	    We also get
	    \begin{align}\label{idpsi0}
	    \left\{\begin{alignedat}{1}
	    \p{z}\psi&=-\frac{f'(z)\bar{f(z)}}{1+|f(z)|^2}\psi+\frac{2f'(z)}{1+|f(z)|^2}\left(-f(z),i\,f(z),1\right)\\
	    \p{\z}\psi&=-\frac{\bar{f'(z)}f(z)}{1+|f(z)|^2}\psi.
	    \end{alignedat}\right.
	    \end{align}
	    Since $\s{\psi}{\psi}=0$, 
	   we have $\s{\p{z}\psi}{\psi}=\s{\p{\z}\psi}{\psi}=0$. In particular, we have
	    \begin{align}\label{idpsi1}
	    \s{(-f(z),i\,f(z),1)}{\psi}=\frac{1}{1+|f(z)|^2}\s{(-f(z),i\,f(z),1)}{(1-f(z)^2,i(1+f(z)^2),2f(z))}=0,
	    \end{align}
	    while
	    \begin{align*}
	    &\bs{(-f(z),i\,f(z),1)}{\left(1-\bar{f(z)}^2,-i\left(1+\bar{f(z)}^2\right),2\bar{f(z)}\right)}\\
	    &=-f(z)+\bar{f(z)}|f(z)|^2+f(z)+\bar{f(z)}|f(z)|^2+2{\bar{f(z)}}
	    =2\bar{f(z)}(1+|f(z)|^2),
	    \end{align*}
	    so that
	    \begin{align}\label{idpsi2}
	    \s{(-f(z),i\,f(z),1)}{{\bar{\psi}}}=2\bar{f(z)}.
	    \end{align}
	    Therefore, we deduce by \eqref{idpsi0}, \eqref{idpsi1} and \eqref{idpsi2} that
	    \begin{align}\label{idpsi}
	    \left\{\begin{alignedat}{1}
	    &\s{\varphi}{\varphi}=\s{\p{z}\varphi}{\varphi}=\s{\p{\z}\varphi}{\varphi}=\s{\psi}{\psi}=\s{\p{z}\psi}{\psi}=\s{\p{\z}\psi}{\psi}=0\\
	    &\s{\p{z}\psi}{\bar{\psi}}=
	    \frac{2f'(z)\bar{f(z)}}{1+|f(z)|^2}\\
	    &\s{\p{\z}\psi}{\bar{\psi}}=-\frac{2\bar{f'(z)}f(z)}{1+|f(z)|^2}.
	    \end{alignedat}\right.
	    \end{align}
	    The identities \eqref{idchi} and \eqref{idpsi} imply that
	    \begin{align*}
	    z\,\p{z}\varphi-\z\,\p{z}\varphi=-\left(\Re\left(z\frac{f''(z)}{f'(z)}\right)+1\right)\varphi+\chi(z)\left(z\,\p{z}\psi-\z\,\p{\z}\psi\right),
	    \end{align*}
	    and since $|\chi|^2=1$ and $|\psi|^2=2$, we have
	    \begin{align*}
	    &\s{z\,\p{z}\varphi-\z\,\p{\z}\varphi}{\varphi+\bar{\varphi}}=-2\,\left(\Re\left(z\frac{f''(z)}{f'(z)}\right)+1\right)\\
	    &+\chi(z)\left(z\left(\chi(z)\s{\p{z}\psi}{\psi}+\bar{\chi(z)}\s{\p{z}\psi}{\bar{\psi}}\right)-\z\left(\chi(z)\s{\p{\z}\varphi}{\varphi}+\bar{\chi(z)}\s{\p{\z}\psi}{\bar{\psi}}\right)\right)\\
	    &=-2\,\left(\Re\left(z\frac{f''(z)}{f'(z)}\right)+1\right)+2z\frac{f'(z)}{f(z)}\frac{|f(z)|^2}{1+|f(z)|^2}+2\z\bar{\frac{f'(z)}{f(z)}}\frac{|f(z)|^2}{1+|f(z)|^2}\\
	    &=-2-2\,\Re\left(z\frac{f''(z)}{f'(z)}-2z\frac{f'(z)}{f(z)}\frac{|f(z)|^2}{1+|f(z)|^2}\right),
	    \end{align*}
	    so that 
	    \begin{align}\label{kg0}
	    k_{g_0}&=\s{\e}{\p{\theta}\f}=-\s{\p{\theta}\e}{\f}
	    =-\s{\partial_{\theta}\Im(\varphi)}{\Re(\varphi)}=-\Im\left(\s{\partial_{\theta}\varphi}{\Re(\varphi)}\right)\nonumber\\
	    &=-\frac{1}{2}\Im\left(\s{i\left(z\p{z}\varphi-\z\p{\z}\right)}{\varphi+\bar{\varphi}}\right)
	    =-\frac{1}{2}\Re\left(\s{z\p{z}\varphi-\z\p{\z}\varphi}{\varphi+\bar{\varphi}}\right)\nonumber
	    \\
	    &=\Re\left(z\frac{f''(z)}{f'(z)}-2z\frac{f'(z)}{f(z)}\frac{|f(z)|^2}{1+|f(z)|^2}\right)+1=\partial_{\nu}\mu+1=\partial_{\nu}\mu+\partial_{\nu}G_{\mathbb{D}},
	    \end{align}
	    which concludes the proof of 
	    the system \eqref{systemu} by the conformal invariance of the Green's function (we denoted for simplicity $G_{\mathbb{D}}=G_{\mathbb{D},0}=\log|\,\cdot\,|$).

	    \textbf{Step 3.} Verification that $(\e_1,\f_1)$ is a harmonic moving frame. 
	    
	 	Now, thanks to Lemma \ref{harmonic} and \eqref{el}, the maps $\e_1$ and $\f_1$ are unit harmonic moving frames if and only if they satisfy in the distributional sense (see Theorem \ref{5.2}) the system (writing $\e=\e_1\circ f_1$ and $\f=\f_1\circ f_1$ for simplicity) 
	 	\begin{align}\label{moving_frame}
	 	\left\{\begin{alignedat}{1}
	 		-\Delta\e&=|\D\e|^2\e+\left(2\s{\D\e}{\D\n}+\s{\e}{\Delta\n}\right)\n\\
	 		-\Delta\f&=|\D\f|^2\f+\left(2\s{\D\f}{\D\n}+\s{\f}{\Delta\n}\right)\n.
	 		\end{alignedat}\right.
	 	\end{align}
	 	where $\n:\mathbb{D}\rightarrow S^2$ is the same map as $f_1$ but viewed as the Gauss map associated to the branched minimal immersion of the disk from $\mathbb{D}$ into $\R^3$ with Weierstrass data $(f,dz)$. 
	 	It is given by
	 	\begin{align*}
	 		\n(z)=\left(\frac{2\,\Re(f(z))}{1+|f(z)|^2},\frac{2\,\Im(f(z))}{1+|f(z)|^2},\frac{-1+|f(z)|^2}{1+|f(z)|^2}\right).
	 	\end{align*}
	 	By a direct computation, we see that the Gauss map satisfies the following equations 
        \begin{align*}
	 		|\D \n(z)|^2&=\frac{8|f'(z)|^2}{(1+|f(z)|^2)^2}\\
	 		-\Delta\n&=|\D\n|^2\n.
	 	\end{align*}
 In particular, the previous equation \eqref{moving_frame} must reduce to
	 	\begin{align}\label{moving_frame_2}
	 		\left\{\begin{alignedat}{1}
	 			-\Delta\e&=|\D\e|^2\e+2\s{\D\e}{\D\n}\n\\
	 			-\Delta\f&=|\D\f|^2\f+2\s{\D\f}{\D\n}\n.
	 		\end{alignedat}\right.
	 	\end{align}
	 	However, since $\s{\e}{\Delta\n}=|\D\n|^2\s{\e}{\n}=0$, we deduce that $-\s{\Delta\e}{\n}=2\s{\D\e}{\D\n}$, and since $|\D\e|^2=1$, we also get (by taking the Laplacian of $|\e|^2=1$) that $-\s{\Delta\e}{\e}=|\D\e|^2\e$. Therefore, we need only check that 
	 	\begin{align}\label{null_condition}
	 		\s{\Delta\e}{\f\,}=\s{\Delta\f}{\e\,}=0
	 	\end{align}
	 	to show that $\e$ and $\f$ satisfy the equations \eqref{moving_frame_2}.
	 {Recall from \eqref{defchipsi} that }	$\e=\Re(\varphi)$ and $\f=\Im(\varphi)$, we deduce that 
	 	\begin{align*}
	 		\Delta\e=\Re(\Delta\varphi),\qquad \Delta\f=\Im(\Delta\varphi) 
	 	\end{align*}
	 	and we have
	 	\begin{align}\label{step0}
	 		\left\{\begin{alignedat}{1}
	 			\s{\Re(\Delta\varphi)}{\Im(\varphi)}&=\frac{1}{2}\Im\left(\s{\Delta\varphi}{\varphi}\right)-\frac{1}{2}\Im(\s{\Delta\varphi}{\bar{\varphi}})\\
	 			\s{\Im(\Delta\varphi)}{\Re(\varphi)}&=\frac{1}{2}\Im\left(\s{\Delta\varphi}{\varphi}\right)+\frac{1}{2}\Im(\s{\Delta\varphi}{\bar{\varphi}}).
	 		\end{alignedat}\right.
	 	\end{align}
	 	Therefore, the equations \eqref{null_condition} are equivalent to
	 	\begin{align}\label{target}
	 		\Im(\s{\Delta\varphi
	 		}{\varphi})=\Im(\s{\Delta\varphi}{\bar{\varphi}})=0.
	 	\end{align}
	 	Using \eqref{idpsi0}, \eqref{idpsi1} and \eqref{idpsi}, we get
	 	\begin{align}\label{np1}
	 	&\s{\Delta \varphi}{\varphi}
	 	=-4\s{\p{z}\varphi}{\p{\z}\varphi}\\
	 	&=-4\bs{-\frac{1}{2}\left(\frac{f''(z)}{f'(z)}+\frac{1}{z}\right)\varphi-\frac{f'(z)\bar{f(z)}}{1+|f(z)|^2}\left(-f(z),i\,f(z),1\right)}{\left(\frac{1}{2}\bar{\left(\frac{f''(z)}{f'(z)}+\frac{1}{z}\right)}-\frac{\bar{f'(z)}f(z)}{1+|f(z)|^2}\right)\varphi}=0,\nonumber
	 	\end{align}
	 	which implies in particular that $\Im\left(\s{\Delta\varphi}{\varphi}\right)=0$.
	 	Then, we compute
	 	\begin{align}\label{laststep}
	 	\s{\p{\z}\varphi}{\bar{\varphi}}=\bs{\left(\frac{1}{2}\bar{\left(\frac{f''(z)}{f'(z)}+\frac{1}{z}\right)}-\frac{\bar{f'(z)}f(z)}{1+|f(z)|^2}\right)\varphi}{\bar{\varphi}}=\left(\bar{\left(\frac{f''(z)}{f'(z)}+\frac{1}{z}\right)}-2\frac{\bar{f'(z)}f(z)}{1+|f(z)|^2}\right).
	 	\end{align}
	 	Therefore, we have
	 	\begin{align*}
	 	\frac{1}{4}\s{\Delta\varphi}{\bar{\varphi}}=\p{z}\s{\p{\z}\varphi}{\bar{\varphi}}-\s{\p{\z}\varphi}{\p{z}\bar{\varphi}}=\p{z}\s{\p{\z}\varphi}{\bar{\varphi}}-|\p{\z}\varphi|^2,
	 	\end{align*}
	 	where we used $\p{z}\bar{\varphi}=\bar{\p{\z}\varphi}$. By \eqref{laststep}, we deduce that
	 	\begin{align*}
	 	\p{z}\s{\p{\z}\varphi}{\bar{\varphi}}=-2\frac{|f'(z)|^2}{1+|f(z)|^2}+2\frac{|f'(z)|^2|f(z)|^2}{(1+|f(z)|^2)^2}=-\frac{2|f'(z)|^2}{(1+|f(z)|^2)^2},
	 	\end{align*}
	 	so that
	 	\begin{align*}
	 	\s{\Delta\varphi}{\bar{\varphi}}=-\frac{8|f'(z)|^2}{(1+|f(z)|^2)^2}-4|\p{\z}\varphi|^2\in \R,
	 	\end{align*}
	 	which implies that
	 	\begin{align}\label{step2}
	 		\Im\left(\s{\Delta\varphi}{\bar{\varphi}}\right)=0.
	 	\end{align}
	 	Therefore, we deduce that \eqref{target} holds, which implies that $\e$ and $\f$ solve the equations \eqref{moving_frame_2}. 
	 	
	 	\textbf{Step 4.} 
	 {Proof} of the decomposition $\omega_1=\ast\,d(G_{\Omega_1}+\mu_1)$. 
	 	
	 	Recall that $\e=\e_1$ and $\f=\f_1$, and let 
	 	\begin{align*}
	 	\omega=\s{\e}{d\f\,}=\s{\e}{\partial\f\,}+\s{\e}{\bar{\partial}\f\,}.
	 	\end{align*}
	 	Recall that since $\ast\, dx=dy$ and $\ast\, dy=-dx$, we have
	 	\begin{align*}
	    \ast\, dz&=\ast\left(dx+i\,dy\right)=dy-i\,dx=-i(dx+i\,dy)=-i\,dz\\
	    \ast\, d\z&=i\,d\z.
	 	\end{align*}
	 	Therefore, $\omega=\ast\,d\left(\mu+G\right)$ (where we write for simplicity $G=G_{\mathbb{D}}=\log|\,\cdot\,|$) if and only if
	 	\begin{align*}
	 	\s{\e}{\partial\f\,}+\s{\e}{\bar{\partial}\f\,}=\ast\left(\partial\left(\mu+G\right)+\bar{\partial}\left(\mu+G\right)\right)=-i\,\partial(\mu+G)+i\,\bar{\partial}(\mu+G),
	 	\end{align*}
	 	which is equivalent to the identity
	 	\begin{align}\label{cartan2}
	 	\s{\e}{\partial\f\,}=-i\,\partial\left(\mu+G\right).
	 	\end{align}
	 	We have by \eqref{idchi} and \eqref{idpsi0}
	 	\begin{align*}
	 	&\partial_{z}\f=\partial_{z}\Re(\varphi)=\frac{1}{2}\left(\p{z}\varphi+\bar{\p{\z}\varphi}\right)
	 	=\frac{1}{2}\left(-\frac{1}{2}\left(\frac{f''(z)}{f'(z)}+\frac{1}{z}\right)\varphi-\frac{f'(z)\bar{f(z)}}{1+|f(z)|^2}\varphi\right.\\
        &\left.+\frac{2f'(z)}{1+|f(z)|^2}\chi(z)(-f(z),i\,f(z),1)
	 	+\frac{1}{2}{\left(\frac{f''(z)}{f'(z)}+\frac{1}{z}\right)}\bar{\varphi}-\frac{f'(z)\bar{f(z)}}{1+|f(z)|^2}\bar{\varphi}\right)
	 	=-\frac{i}{2}\left(\frac{f''(z)}{f'(z)}-\frac{2f'(z)\bar{f(z)}}{1+|f(z)|^2}\right).
	 	\end{align*}
	 	Therefore, using \eqref{idpsi1}, \eqref{idpsi2}, \eqref{idpsi}, and $\s{\e}{\f}=0$, we deduce that
	 	\begin{align*}
	 	\s{\e}{\p{z}\f\,}&=\s{\Im(\varphi)}{\p{z}\Re(\varphi)}=-\frac{i}{2}\left(\frac{f''(z)}{f'(z)}+\frac{1}{z}\right)+\frac{i}{2}\frac{f'(z)}{1+|f(z)|^2}\s{(-f(z),i\,f(z),1)}{\bar{\psi}}\\
	 	&=-\frac{i}{2}\left(\frac{f''(z)}{f'(z)}-\frac{2f'(z)\bar{f(z)}}{1+|f(z)|^2}+\frac{1}{z}\right),
	 	\end{align*}
	 	and this concludes the proof of \eqref{cartan2} since by \eqref{defmu}
	 	\begin{align*}
	 	\partial_{z}(\mu(z)+\log|z|)&=\p{z}\left(\log|f'(z)|-\log(1+|f(z)|^2)-\frac{1}{2}\log(2)+\log|z|\right)\\
	 	&=\frac{1}{2}\left(\frac{f''(z)}{f'(z)}-\frac{2f'(z)\bar{f(z)}}{1+|f(z)|^2}+\frac{1}{z}\right).  
	 	\end{align*}
        This last identity concludes the proof of the theorem.
	 \end{proof}

	 Finally, we will establish the uniqueness of distributional solutions of the system \eqref{key2} with appropriate boundary conditions {\eqref{systemu}}. This is the exact analogous of Remark I.$1$ of \cite{BBH}. First, we need to define explicit maps that yield trivialisations of vector fields on simply connected domains of the sphere. Let $\Omega_1\subset S^2$ be as Theorem \ref{key2}.
  Using the stereographic projection $\pi : S^2\setminus\ens{N} \to \C$, we have one holomorphic chart $z$ on $S^2\setminus\ens{N}$, and for a domain $\Omega_1\subset S^2\setminus\ens{N}$, it yields a trivialisation 
  $T \Omega_1 \rightarrow \Omega\times \C$
	 	where $\Omega = \pi (\Omega_1)$.

	 	More explicitly, let $X:\Omega\rightarrow \R^3$ {be a vector field} such that $\s{X}{\n}=0$, where $\n 
	 	:\Omega\rightarrow S^2$ is the unit normal {given by}
	 	\begin{align*}
	 		\pi^{-1}(z)=\n(z)=\left(\frac{2\,\Re(z)}{1+|z|^2},\frac{2\,\Im(z)}{1+|z|^2},\frac{-1+|z|^2}{1+|z|^2}\right).
	 	\end{align*}
	 	Now, we introduce the function $\psi:\C\rightarrow \C^3$, given by 
	 	\begin{align*}
	 		\psi(z)=\left(\frac{1-z^2}{1+|z|^2},\frac{i(1+z^2)}{1+|z|^2},\frac{2z}{1+|z|^2}\right),
	 	\end{align*}
	 	and we easily check that 
	 	\begin{align}\label{ntrivial0}
	 		\s{\psi}{\psi}=0\qquad |\psi|^2=\s{\psi}{\bar{\psi}}=2.
	 	\end{align}
	 	Therefore, we deduce that $(\e_1,\e_2)$ defined as follows is a tangent unit moving frame (orthogonal to $\n$
	 	) 
	 	\begin{align*}
	 		\e_1(z)=\Re(\psi(z)),\quad \e_2(z)=\Im(\psi(z)).
	 	\end{align*}
	 	The trivialisation map on $\Omega_1\subset S^2\setminus\ens{N}$ is then given by 
	 	\begin{align}\label{trivialisation}
	 		T\Omega_{1} &\rightarrow \Omega\times \C\nonumber\\
	 		(z,v)&\mapsto (z,\s{v}{\e_1(z)}+i\,\s{v}{\e_2(z)}),
	 	\end{align}
	 	while the trivialisation map of sections is given by 
	 	\begin{align}\label{trivialisation_sections}
	 		\Psi_{\Omega_1}:\Gamma(T\Omega_1)&\rightarrow C^{\infty}(\Omega_1,\C)\nonumber\\
	 		X&\mapsto \s{X}{\e_1}+i\,\s{X}{\e_2}.
	 	\end{align}
	 	Notice that for all tangent vector field $X$, we have  $\s{X}{\n}=0$, which implies that there exists real functions $\lambda_1,\lambda_2 :\Omega_1\rightarrow \R$ such that 
	 	\begin{align*}
	 		X=\lambda_1\e_1+\lambda_2\e_2.
	 	\end{align*}
	 	\begin{rem}
	 	    	Using the next Theorem \ref{5.2}, it is easy to check that $(\e_j,\f_j)$ ($j=1,2$) are harmonic vector fields since by \eqref{defchipsi} and \eqref{idchi}, we have
	 \begin{align*}
	 -\Delta\chi=\left|\frac{f''(z)}{f'(z)}+\frac{1}{z}\right|^2\chi=|\D\chi|^2\chi,
	 \end{align*}
	 \emph{i.e.} $\chi:\mathbb{D}\rightarrow S^1$ is a harmonic map with values into $S^1$.
	 	\end{rem}
	 
	 \begin{theorem}\label{5.2}
	 	Under the conditions of Theorem \ref{key2}, let $\Omega_1^{\ast}=\Omega_1\setminus\ens{p_1}$ and $\vec{u}\in W^{1,2}_{\mathrm{loc}}(\Omega_1^{\ast},U\Omega_1)\cap W^{1,1}(\Omega_1,U\Omega_1)$ be a unit vector field in $\Omega_1$, and let $\vec{u}_0=\Psi_{\Omega_1}(\vec{u}):\Omega_1\rightarrow S^1$. Then $\vec{u}$ is a harmonic vector field on $\Omega_1$, \emph{i.e.} 
	 	\begin{align*}
	 		-\Delta_{g_0}\vec{u}=|d\vec{u}|_{g_0}^2{\vec{u}}+\left(2\s{d\vec{u}}{d\n}_{g_0}+\s{\vec{u}}{\Delta_{g_0}\n}\right)\n
	 	\end{align*}
	 	if and only if $\vec{u}_0$ is a harmonic map with values into $S^1$, \emph{i.e.}
	 	\begin{align*}
	 		-\Delta_{g_0}\vec{u}_0=|d\vec{u}_0|^2_{g_0}\vec{u}_0.
	 	\end{align*}
	 	In particular, for all degree $1$ boundary data $h\in H^{1/2}(\partial\Omega_1, S^1)$ and $p\in \Omega_1$, there exists a unique unit vector-field $\vec{u}\in W^{1,2}_{\mathrm{loc}}(\Omega_1^{\ast},U\Omega_1)\cap W^{1,1}(\Omega_1,U\Omega_1)$ such that $\vec{u}=\Psi_{\Omega_1}^{-1}(h)$ on $\partial \Omega_1$ 
   and such that $\vec{u}_0=\Psi_{\Omega_1}(\vec{u})$ satisfies in the distributional sense
	 	\begin{align*}
	 	\dive\left(\vec{u}_0\times \D\vec{u}_0\right)=0\qquad\text{in}\;\,\mathscr{D}'(\Omega_1).
	 	\end{align*}	
	 \end{theorem}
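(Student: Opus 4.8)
The plan is to reduce the statement to the flat planar case and then to run the computation of Step~3 of the proof of Theorem~\ref{key2} with the specific function $\chi$ there replaced by an arbitrary $S^1$-valued trivialising function. Since we arranged $N\in\Omega_2$, the stereographic projection $\pi$ restricts to a conformal diffeomorphism $\Omega_1\to\Omega:=\pi(\Omega_1)\subset\C$ with $\pi^{\ast}g_0=e^{2\psi}|dz|^2$; the Euler--Lagrange equation \eqref{el} for unit vector fields orthogonal to $\vec{n}$, the $S^1$-harmonic map equation $-\Delta_{g_0}\vec{u}_0=|d\vec{u}_0|_{g_0}^2\vec{u}_0$, and its divergence form $\dive(\vec{u}_0\times\D\vec{u}_0)=0$ are all conformally invariant in two dimensions, so it suffices to work on $\Omega$ with the flat metric, with $\vec{n}=\pi^{-1}$ and the tangent frame $(\Re\psi,\Im\psi)$. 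Unwinding the definition \eqref{trivialisation_sections}, a unit normal-orthogonal $\vec{u}$ and its trivialisation $\vec{u}_0=\Psi_{\Omega_1}(\vec{u})$ satisfy $\vec{u}=\Re(\overline{\vec{u}_0}\,\psi)$; setting $w:=\overline{\vec{u}_0}:\Omega\to S^1$, this reads $\vec{u}=\Re(w\psi)$, exactly the shape of $\varphi=\chi\psi$ from Steps~1--3.

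For the equivalence, the key observation is that the right-hand side of \eqref{el} has no component along $\vec{n}\times\vec{u}$, and — as in Step~3, using $-\Delta\vec{n}=|d\vec{n}|^2\vec{n}$ together with $|\vec{u}|^2=1$ and $\s{\vec{u}}{\vec{n}}=0$ — its components along $\vec{u}$ and along $\vec{n}$ hold automatically for any unit normal-orthogonal field. So $\vec{u}$ is a harmonic vector field if and only if $\s{\Delta\vec{u}}{\vec{n}\times\vec{u}}=0$, and since $\vec{n}\times\vec{u}=\Im(w\psi)$ up to a sign, formula \eqref{step0} rewrites this as the vanishing of the relevant linear combination of $\Im\s{\Delta(w\psi)}{w\psi}$ and $\Im\s{\Delta(w\psi)}{\overline{w\psi}}$. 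Using $\s{\psi}{\psi}=0$, $|\psi|^2=2$, the derivative identities for $\psi$ (the analogues of \eqref{idpsi0} with $f$ replaced by $z$), and $|w|=1$, one finds as in \eqref{np1} that $\s{\Delta(w\psi)}{w\psi}=-4\s{\p{z}(w\psi)}{\p{\z}(w\psi)}=0$, while $\s{\p{\z}(w\psi)}{\overline{w\psi}}=2\bar{w}\,\p{\z}w-\tfrac{2z}{1+|z|^2}$, whose $\p{z}$ has imaginary part $\tfrac12\Im(\bar{w}\,\Delta w)$; arguing as in \eqref{laststep}--\eqref{step2}, this gives $\Im\s{\Delta(w\psi)}{\overline{w\psi}}=2\,\Im(\bar{w}\,\Delta w)$, so that $\s{\Delta\vec{u}}{\vec{n}\times\vec{u}}$ equals $\Im(\bar{w}\,\Delta w)$ up to a sign. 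This vanishes precisely when $\Delta w$ is everywhere a real multiple of $w$, hence (the multiplier being $-|dw|^2$ since $|w|=1$) precisely when $-\Delta w=|dw|^2w$, i.e.\ when $w$ — equivalently $\vec{u}_0=\bar{w}$ — is a harmonic $S^1$-valued map. Undoing the conformal change gives the equivalence on $\Omega_1\subset S^2$.

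For the existence and uniqueness statement, the equivalence together with conformal invariance reduce us to finding $\vec{u}_0$ on $\Omega$ (flat metric, $p:=\pi(p_1)$) with $\vec{u}_0=h$ on $\partial\Omega$, $\vec{u}_0\in W^{1,2}_{\loc}(\Omega\setminus\{p\})\cap W^{1,1}(\Omega)$, $|\vec{u}_0|=1$, winding number $1$, solving $\dive(\vec{u}_0\times\D\vec{u}_0)=0$ in $\mathscr{D}'(\Omega)$. Writing $\vec{u}_0=e^{i\Theta}$ locally, one has $\vec{u}_0\times\D\vec{u}_0=\D\Theta$, so the equation says $\Theta$ is harmonic off its singular set; the degree-$1$ condition forces a single singularity of winding $+1$, which by hypothesis sits at $p$, while the $W^{1,1}$-control rules out any other. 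Post-composing with a biholomorphism $\Omega\to\mathbb{D}$ sending $p$ to $0$, the map $\vec{u}_0$ becomes $\tfrac{z}{|z|}\,e^{i\tilde{\Theta}}$ with $\tilde{\Theta}$ single-valued harmonic and with boundary trace determined by $h\in H^{1/2}$; existence and uniqueness of $\tilde{\Theta}$ is the classical Dirichlet problem. (This also explains the regularity class, since $\tfrac{z}{|z|}\in W^{1,1}(\mathbb{D})\setminus W^{1,2}(\mathbb{D})$.) This produces the unique such $\vec{u}_0$, hence the unique $\vec{u}=\Psi_{\Omega_1}^{-1}(\vec{u}_0)$, exactly as in Remark~I.1 of \cite{BBH}.

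The main obstacle is not the algebra — which is essentially a transcription of Step~3 — but carrying it through at the stated minimal regularity: $\vec{u}_0$ lies only in $W^{1,2}_{\loc}$ away from $p_1$ and merely in $W^{1,1}$ across it, so the reduction to $\s{\Delta\vec{u}}{\vec{n}\times\vec{u}}=0$ and the passage to the divergence form must be justified distributionally, in particular near the prescribed singularity (which is why the conclusion is phrased weakly). Controlling the interaction between the point singularity at $p_1$ and the $H^{1/2}$ boundary trace in these integrations by parts is the delicate step, and is handled with the techniques already developed in \cite{lauromain} and \cite{BBH}.
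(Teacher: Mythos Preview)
Your proposal is correct and follows essentially the same route as the paper: reduce to the flat domain via stereographic projection, write $\vec{u}=\Re(w\psi)$ with $w=\overline{\vec{u}_0}$, and show the harmonic vector-field equation is equivalent to $\Im(\bar{w}\,\Delta w)=0$, i.e.\ to the $S^1$-harmonic map equation for $\vec{u}_0$; the paper does the same computation but with the local argument $\varphi$ (so $w=e^{i\varphi}$) and reduces instead to $\Delta\varphi=0$, which is the same condition. For existence and uniqueness the paper likewise invokes the canonical harmonic map with a prescribed singularity from \cite{BBH} (Theorem~I.5 and Remark~I.1), matching your Dirichlet-problem reduction.
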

	 \begin{rem}
	 If $\vec{u}_0:\Omega \rightarrow S^1$, writing locally $\vec{u}_0=e^{i\varphi}$ for some real-valued function $\varphi$, we deduce that $\vec{u}_0$ is harmonic if and only if
	 \begin{align*}
	 -\Delta \vec{u}_0=
	 \left(|\D\varphi|^2-i\,(\Delta\varphi)\right)\vec{u}_0=|\D\vec{u}_0|^2\vec{u}_0.
	 \end{align*}
	 Therefore, $\vec{u}_0$ is harmonic as a map with values into $S^1$ if and only if $\varphi$ is harmonic, \emph{i.e.} $\Delta\varphi=0$. 
	 \end{rem}
	 \begin{proof}[{Proof of Theorem~\ref{5.2}}]
	 	By making a stereographic projection, thanks to the conformal invariance of the harmonic equation, we deduce that for all unit vector-field $\vec{u}\in \Gamma(T\Omega_1^{\ast})$ is given in $\Omega=\pi_N(\Omega_1)$ as
	 	\begin{align}\label{expansion0}
	 		\vec{u}=\lambda_1\,\Re(\psi)+\lambda_2\,\Im(\psi),
	 	\end{align}
	 	where 
	 	\begin{align*}
	 		\psi(z)=\left(\frac{1-z^2}{1+|z|^2},\frac{i(1+z^2)}{1+|z|^2},\frac{2z}{1+|z|^2}\right).
	 	\end{align*}
	 	Furthermore, we have $\lambda_1^2+\lambda_2^2=1$, which implies that there exists a measurable function $\varphi$ such that $\lambda_1+i\lambda_2=e^{-i\varphi}$. In particular, we can rewrite \eqref{expansion0} as
	 	\begin{align*}
	 		\vec{u}=\cos(\varphi)\,\Re(\psi)-\sin(\varphi)\,\Im(\psi)=\Re(e^{-i\varphi})\,\Re(\psi)+\Im(e^{-i\varphi})\,\Im(\psi)=\Re\left(e^{i\varphi}\psi\right),
	 	\end{align*}
	 	where we used the identity
	 	$
	 		\Re(a)\,\Re(b)+\Im(a)\,\Im(b)=\Re(\bar{a}\,b)
        $
	 	valid for all $a,b\in\C$. If 
	 	\begin{align*}
	 		\vec{v}=\sin(\varphi)\Re(\psi)+\cos(\varphi)\Im(\psi)=\Im(e^{i\varphi}\psi),
	 	\end{align*}
	 	we immediately have $\s{\vec{u}}{\vec{v}\,}=0$, and since $|\vec{u}\,|^2=|\vec{v}\,|^2 = 1$, while $-\Delta\n=|\D\n|^2\n$, we get
	 	\begin{align*}
	 		&\s{\Delta\vec{u}}{\vec{u}\,}=-|\D\vec{u}\,|^2\\
	 		&\s{\Delta\vec{u}}{\vec{n}\,}=-2\s{\D\vec{u}}{\D\n\,}-\s{\vec{u}}{\Delta\n\,}=-2\s{\D\vec{u}}{\D\n\,},
	 	\end{align*}
	 	and similar formulae for $\vec{v}$. Therefore, we deduce that $(\vec{u},\vec{v})$ solves the system
	 	\begin{align*}
	 		\left\{\begin{alignedat}{2}
	 			-\Delta\vec{u}&=|\D\vec{u}\,|^2\vec{u}+2\s{\D\vec{u}}{\D\n\,}\n\qquad&&\text{in}\;\,\Omega\\
	 			-\Delta\vec{v}&=|\D\vec{v}\,|^2\vec{v}+2\s{\D\vec{v}}{\D\n\,}\n\qquad&&\text{in}\;\,\Omega,
	 		\end{alignedat}\right.
	 	\end{align*}
	 	if and only if
	 	\begin{align*}
	 		\s{\Delta\vec{u}}{\vec{v}\,}=\s{\Delta\vec{v}}{\vec{u}\,}=0.
	 	\end{align*}
	 	Now, we compute
	 	\begin{align*}
	 		\Delta\vec{u}&=\Re\left((i\,\Delta\varphi-|\D \varphi|^2)e^{i\varphi}\psi\right)+2\,\Re\left(ie^{i\varphi}\D\varphi\cdot\D\psi\right)+\Re\left(e^{i\varphi}\Delta\psi\right)\\
	 		&=-(\Delta\varphi)\Im(e^{i\varphi}\psi)-|\D\varphi|^2\Re(e^{i\varphi}\psi)+\Re\left(e^{i\varphi}\Delta\psi\right)\\
	 		&=-(\Delta\varphi)\,\vec{v}-|\D\varphi|^2\vec{u}+2\,\Re\left(ie^{i\varphi}\D\varphi\cdot\D\psi\right)+\Re(e^{i\varphi}\Delta\psi)\\
	 		\Delta\vec{v}&=(\Delta\varphi)\,\vec{u}-|\D\varphi|^2\vec{u}+2\,\Im\left(ie^{i\varphi}\D\varphi\cdot\D\psi\right)+\Im(e^{i\varphi}\Delta\psi).
	 	\end{align*}
	 	We
	 	have since $\s{\D\varphi}{\varphi}=0$ the identity
	 	\begin{align*}
	 		\bs{\Re(ie^{i\varphi}\D\varphi\cdot\D\psi)}{\vec{v}}&=\Re\bs{ie^{i\varphi}\D\varphi\cdot\D\psi}{\frac{e^{i\varphi}\psi-e^{-i\varphi}\bar{\psi}}{2i}}=-\frac{1}{2}\Re\left(\D\varphi\cdot\s{\D\psi}{\bar{\psi}}\right)\\
	 		\bs{\Im(ie^{i\varphi}\D\varphi\cdot \D\psi)}{\vec{u}}&=\Im\bs{ie^{i\varphi}\D\varphi\cdot\D\psi}{\frac{e^{i\varphi}\psi+e^{-i\varphi}\bar{\psi}}{2}}=\frac{1}{2}\Re\left(\D\varphi\cdot\s{\D\psi}{\bar{\psi}}\right)\\
	 		\s{\Re(e^{i\varphi}\Delta\psi)}{\vec{v}}&=\Re\bs{e^{i\varphi}\Delta\psi}{\frac{e^{i\varphi}\psi-e^{-i\varphi}\psi}{2i}}=\frac{1}{2}\Im\left(e^{2i\varphi}\s{\Delta\psi}{\psi}\right)-\frac{1}{2}\Im\s{\Delta\psi}{\psi}\\
	 		\s{\Im(e^{i\varphi}\Delta\psi)}{\vec{u}}&=\frac{1}{2}\Im\left(e^{2i\varphi}\s{\Delta\psi}{\psi}\right)+\frac{1}{2}\Im\s{\Delta\psi}{\bar{\psi}}.
	 	\end{align*}
	 	In particular, we have
	 	\begin{align*}
	 		\s{\Delta\vec{u}}{\vec{v}}=-(\Delta\varphi)-\Re\left(\D\varphi\cdot\s{\D\psi}{\bar{\psi}}\right)+\frac{1}{2}\Im\left(e^{2i\varphi}\s{\Delta\psi}{\psi}\right)-\frac{1}{2}\Im\s{\Delta\psi}{\psi}\\
	 		\s{\Delta\vec{v}}{\vec{u}}=(\Delta\varphi)+\Re\left(\D\varphi\cdot\s{\D\psi}{\bar{\psi}}\right)+\frac{1}{2}\Im\left(e^{2i\varphi}\s{\Delta\psi}{\psi}\right)+\frac{1}{2}\Im\s{\Delta\psi}{\bar{\psi}}.
	 	\end{align*}
	 	Summing those equations and substracting the first one to the second one yields the system
	 	\begin{align}\label{initial_system}
	 		\left\{\begin{alignedat}{1}
	 			&\Im\left(e^{2i\varphi}\s{\Delta\psi}{\psi}\right)=0\\
	 			&2(\Delta\varphi)+2\,\Re\left(\D\varphi\cdot\s{\D\psi}{\bar{\psi}}\right)+\Im\s{\Delta\psi}{\bar{\psi}}=0.
	 		\end{alignedat} \right.
	 	\end{align}
	 	We will show that for all smooth \emph{real-valued} function $\varphi:\Omega\rightarrow\R$
	 	\begin{align}\label{three_diagram}
	 		&\Re\left(\D\varphi \cdot \s{\D\psi}{\bar{\psi}}\right)=\s{\Delta\psi}{\psi}=\Im\s{\Delta\psi}{\bar{\psi}}=0,
	 	\end{align}
	 	which will imply that $(\vec{u},\vec{v})$ solves the system \eqref{initial_system} if and only if $\Delta\varphi=0$, or $\varphi$ is harmonic.

	 	Now, we compute
	 	\begin{align*}
	 		\p{z}\psi&=-\frac{\z}{1+|z|^2}\psi+\frac{2}{1+|z|^2}\left(-z,i\,z,1\right)\\
	 		\p{\z}\psi&=-\frac{z}{1+|z|^2}\psi
	 	\end{align*}
	 	We have
	 	\begin{align*}
	 		\D\varphi\cdot \D\psi&=2\,\p{z}\varphi\cdot \p{\z}\psi+2\,\p{\z}\varphi\cdot\p{z}\psi\\
	 		&=-2\frac{\z\p{z}\varphi}{1+|z|^2}\psi+\frac{4}{1+|z|^2}(-z\p{z}\varphi,iz\p{z}\varphi,\p{z}\varphi)
	 		-2\frac{z\p{\z}\varphi}{1+|z|^2}\psi\\
	 		&=-4\,\Re\left(\frac{\z\p{z}\varphi}{1+|z|^2}\right)\psi+\frac{4}{1+|z|^2}(-z\p{z}\varphi,iz\p{z}\varphi,\p{z}\varphi).
	 	\end{align*}
	 	Then we have
	 	\begin{align}\label{null_condition1}
	 		\frac{1}{4}\Delta\psi=\p{z\z}^2\psi=\frac{-1+|z|^2}{(1+|z|^2)^2}\psi-\frac{2}{(1+|z|^2)^2}(-z^2,iz^2,z)
	 	\end{align}
	 	Now, notice that 
	 	\begin{align*}
	 		\s{(-z^2,iz^2,z)}{\psi}&=\frac{1}{1+|z|^2}\s{(-z^2,iz^2,z)}{\left(1-z^2,i(1+z^2),2z\right)}\\
	 		&=\frac{1}{1+|z|^2}\left(-z^2(1-\z^2)+z^2(1+\z^2)+2z^2\right)=0,
	 	\end{align*}
	 	which implies as $\s{\psi}{\psi}=0$ and by \eqref{null_condition1} that 
	 	\begin{align}\label{eq_part1}
	 		\s{\Delta\psi}{\psi}=0.
	 	\end{align}
	 	Now, we have
	 	\begin{align*}
	 		\s{(-z^2,iz^2,z)}{\bar{\psi}}=\frac{1}{1+|z|^2}\left(-z^2(1-\z^2)+z^2(1+\z^2)+2|z|^2\right)=2|z|^2.
	 	\end{align*}
	 	Since $|\psi|^2=\s{\psi}{\bar{\psi}}=2$, we deduce that 
	 	\begin{align}\label{eq_part2}
	 		\s{\Delta\psi}{\bar{\psi}}=4\left(\frac{2(-1+|z|^2)}{(1+|z|^2)^2}-\frac{4|z|^2}{(1+|z|^2)^2}\right)=-\frac{8}{1+|z|^2}\in\R.
	 	\end{align}
	 	We now compute
	 	\begin{align*}
	 		\s{(-z,iz,1)}{\bar{\psi}}=\frac{1}{1+|z|^2}\left(-z(1-\z^2)+z(1+\z^2)+2\z\right)=2\z.
	 	\end{align*}
	 	which shows since $|\psi|^2=2$ that
	 	\begin{align*}
	 		\s{\p{z}\psi}{\psi}&=-\frac{2\z}{1+|z|^2}+\frac{4\z}{1+|z|^2}=\frac{2\z}{1+|z|^2}\\
	 		\s{\p{\z}\psi}{\psi}&=-\frac{2z}{1+|z|^2}.
	 	\end{align*}
	 	Therefore, we have
	 	\begin{align*}
	 		\D\varphi\cdot\s{\D\psi}{\psi}=2\,\p{\z}\varphi\cdot\s{\p{z}\psi}{\psi}+2\,\p{z}\varphi\cdot\s{\p{\z}\psi}{\psi}=\frac{4\,\z\,\p{\z}\varphi}{1+|z|^2}-\frac{4\,z\,\p{z}\varphi}{1+|z|^2}=8i\,\Im\left(\frac{\z\,\p{\z}\varphi}{1+|z|^2}\right)\in i\R,
	 	\end{align*}
	 	and this immediately implies that 
	 	\begin{align}\label{eq_part3}
	 		\Re\left(\D\varphi\cdot \s{\D\psi}{\psi}\right)=0.
	 	\end{align}
	 	Finally, we deduce by \eqref{eq_part1}, \eqref{eq_part2} and \eqref{eq_part3} that \eqref{three_diagram} holds and that the system \eqref{initial_system} holds if and only if $\Delta\varphi=0$. If $\vec{u}=g_{\Omega_1}=\Psi_{\Omega_1}(g)$ for some $g:\partial\Omega_1=\Gamma\rightarrow S^1$, then we have
	 	\begin{align*}
	 		\lambda_1+i\lambda_2=g,
	 	\end{align*}
	 	or
	 	\begin{align*}
	 		e^{-i\varphi}=g\qquad\text{on}\;\, \Gamma. 
	 	\end{align*}
	 	In particular, the function $\vec{u}_0=e^{-i\varphi}:\Omega_1\setminus\ens{p_1}\rightarrow S^1$ is a harmonic map on $\Omega_1\setminus\ens{p_1}$ satisfying $\vec{u}_0=h$ on $\Gamma$. Now, notice that provided $\vec{u}\in W^{1,1}(\Omega_1)$, one can rewrite the equation distributionally as
	 	\begin{align*}
	 		\dive\left(\vec{u}\times\D\vec{u}\right)=2\s{\D\vec{u}}{\D\n}\n\times \vec{u}.
	 	\end{align*}
	 	In particular, we deduce as $u_0$ is harmonic that 
	 	\begin{align*}
	 		\dive\left(\vec{u}_0\times \D \vec{u}_0\right)=\frac{\partial}{\partial x_1}\left(\vec{u}_0\times \frac{\partial \vec{u}_0}{\partial x_1}\right)+\frac{\partial}{\partial x_2}\left(\vec{u}_0\times \frac{\partial \vec{u}_0}{\partial x_2}\right)=0.
	 	\end{align*}
	 	By Theorem I.$5$ and Remark I.$1$ of \cite{BBH}, we deduce that $\vec{u}_0$ is the unique harmonic function with a singularity at $p_1$ such that $\vec{u}_0=h$ on $\partial \Omega_1$. This concludes the proof of the theorem. 
	 \end{proof}
    
	 \section{Proof of the Main Theorems for Non-Smooth Curves}

	 In order to extend Theorem \ref{wp10} to the non-smooth setting, 
	 we will obtain another formula for $\mathscr{E}_0$ in terms of conformal maps and that holds true for any closed simple curve of finite Loewner energy. Using this additional formula, the convergence result will be easily obtained.  
	 
	 Under the preceding notations, if $\Gamma\subset S^2$ {Weil-Petersson quasicircle}, 
	  {from Remark~\ref{rem:existence_frame},} thanks to Theorem \ref{key2}, there exists {harmonic} moving frames $(\e_1,\f_1)$ and $(\e_2,\f_2)$ on $\Omega_1$ and $\Omega_2$ with arbitrary
   singularities $p_1$ and $p_2$ respectively, such that
	 \begin{align*}
	 	\mathscr{E}(\Gamma)=\int_{\Omega_1}|d\mu_1|^2_{g_0}d\mathrm{vol}_{g_0}+\int_{\Omega_2}|d\mu_2|^2_{g_0}d\mathrm{vol}_{g_0}+2\int_{\Omega_1}G_{\Omega_1}K_{g_0}d\mathrm{vol}_{g_0}+2\int_{\Omega_2}G_{\Omega_2}K_{g_0}d\mathrm{vol}_{g_0}+4\pi.
	 \end{align*}
	 where $\omega_j=\s{\e_j}{d\f_j}=\ast\,d(G_{\Omega_j}+\mu_j)$ in $\mathscr{D}'(\Omega_j)$ for $j=1,2$,  and $\mu_j$ satisfies \eqref{boundary}. We saw in Theorem \ref{wp10} that in the case of smooth curves, there exists conformal maps $f_1:\mathbb{D}\rightarrow \Omega_1$ and $f_2:\mathbb{D}\rightarrow \Omega_2$ such that
  \begin{align*}
      I^L(\Gamma)=\frac{1}{\pi}\mathscr{E}(\Gamma)+4\log|\D f_1(0)|+4\log|\D f_2(0)|-12\log(2)=\frac{1}{\pi}\mathscr{E}_0(\Gamma).
  \end{align*}
  In this section, we generalise this result for curves of finite Loewner energy.
	 Now, if $\pi:S^2\setminus\ens{p_2}\rightarrow \C$ is 
	 {a}
	 stereographic projection, since $f_j:\mathbb{D}\rightarrow \Omega_j$ is conformal and $\pi$ is also conformal, we deduce that 
	 $\pi\circ f_j:\mathbb{D}\rightarrow \pi(\Omega_j)\subset \R$ is also conformal. Therefore, these maps are {bi}holomorphic or anti-{bi}holomorphic, so up to a complex conjugate (which is an isometry), we can assume that 
	 {they} are holomorphic. 
	 Notice that $\Omega=\pi(\Omega_1)$ is bounded, while $\pi(\Omega_2)=\C\setminus\bar{\Omega}$ is unbounded. Therefore, if ${\ii}:\C\setminus\ens{0}\rightarrow \C\setminus\ens{0}$ is the inversion, {we let}
	 $g=\pi\circ f_{2} \circ {\ii}:\C\setminus\bar{\mathbb{D}}\rightarrow \C\setminus\bar{\Omega}$ 
	 {and} $f ={\pi \circ f_1}:\mathbb{D}\rightarrow \Omega$. 
	 {From \eqref{defs1} and \eqref{universal}}, if $\gamma=\pi(\Gamma)$, we have
	 \begin{align}\label{loewner1}
	 	I^L(\Gamma)=I^L(\gamma)=\int_{\mathbb{D}}\left|\frac{f''(z)}{f'(z)}\right|^2|dz|^2+\int_{\mathbb{C}\setminus\bar{\mathbb{D}}}\left|\frac{g''(z)}{g'(z)}\right|^2|dz|^2+4\pi\log|f'(0)|-4\pi \log|g'(\infty)|.
	 \end{align}
	 Indeed, since $f_2(0)=p_2$, we have $g(\infty)=\infty$, so that the functions $f$, $g$ satisfy the needed conditions {to apply Theorem~\ref{thm:equi_WP}.}

	 Now, with the previous notations, define the functional
	 \begin{align*}
	 	\mathscr{E}_0({\Gamma})=\mathscr{E}({\Gamma})+4\pi\log|\D f_1(0)|+4\pi\log|\D f_2(0)|-12\pi\log(2).
	 \end{align*}
	 
	 \begin{defi}\label{def:s3} 
	 Let $\gamma$ be a Jordan curve with finite Loewner energy. Let
	 $f:\mathbb{D}\rightarrow \Omega$, $g:\C\setminus\bar{\mathbb{D}}\rightarrow \C\setminus\bar{\Omega}$ be biholomorphic maps such that $g(\infty)=\infty$, we define the third universal Liouville action $S_3$ by 
	 	\begin{align*}
	 		S_3(\gamma)&=\int_{\mathbb{D}}\left|\frac{f''(z)}{f'(z)}-2\frac{f'(z)}{f(z)}\frac{|f(z)|^2}{1+|f(z)|^2}\right|^2|dz|^2+\int_{\C\setminus\bar{\mathbb{\mathbb{D}}}}\left|\frac{g''(z)}{g'(z)}-2\frac{g'(z)}{g(z)}\frac{|g(z)|^2}{1+|g(z)|^2}+\frac{2}{z}\right|^2|dz|^2\\
	 		&+2\int_{\mathbb{D}}\log|z|\frac{4|f'(z)|^2|dz|^2}{(1+|f(z)|^2)^2}-2\int_{\C\setminus\bar{\mathbb{D}}}\log|z|\frac{4|g'(z)|^2|dz|^2}{(1+|g(z)|^2)^2}+4\pi\\
	 		&+4\pi\log|f'(0)|-4\pi\log|g'(\infty)|-4\pi\log(1+|f(0)|^2).
	 	\end{align*}
	 \end{defi}

  \begin{rem}
      \begin{enumerate}
          \item[($1$)] One may wonder from where this definition comes from. It will be made clear in the proof of the next theorem where we explicitly rewrite $\mathscr{E}_0$ with the help of the conformal maps $f$ and $g$ defined above.
          \item[($2$)] We call this quantity $S_3$ since a functional called $S_2$ was defined in \cite{takteo} as
          the log-determinant of the Grunsky operator associated with the curve $\gamma$  (up to a factor $-\frac{1}{12}$).
      \end{enumerate}
  \end{rem}
	 
	 {The goal of this section is to show  the identity}
	 	\begin{equation}\label{eq:chain_eq}
	 	    {\pi} I^L = S_1=S_3=\mathscr{E}_0.
	 	\end{equation}
	 	
	 The third equality is straightforward {and is proved in Theorem~\ref{third_loewner}, and the proof of the whole identity is completed in Theorem~\ref{s3}. }  
	 \begin{theorem}\label{third_loewner}
	 	Let $\Gamma\subset S^2$ be a simple curve of finite Loewner energy. Then we have
	 	\begin{align*}
	 		\mathscr{E}_0(\Gamma)=S_3(\Gamma)
	 	\end{align*}
	 \end{theorem}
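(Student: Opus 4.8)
The plan is to open up the definition $\mathscr{E}_0(\Gamma)=\mathscr{E}(\Gamma)+4\pi\log|\nabla f_1(0)|+4\pi\log|\nabla f_2(0)|-12\pi\log(2)$ and match, term by term, each piece of $\mathscr{E}_0$ with the corresponding piece of $S_3(\Gamma)$; every matching will be an instance of conformal invariance (of the Dirichlet energy, of the Green's function) combined with the substitution $z\mapsto 1/z$ for the terms living on $\Omega_2$. I fix the stereographic projection $\pi:S^2\setminus\{p_2\}\to\mathbb{C}$ used to define $S_3$, so that $f=\pi\circ f_1:\mathbb{D}\to\Omega:=\pi(\Omega_1)$ and $g=\pi\circ f_2\circ\ii:\mathbb{C}\setminus\bar{\mathbb{D}}\to\mathbb{C}\setminus\bar{\Omega}$, where $(f_1,f_2)$ are the conformal maps attached to the harmonic moving frames of Theorem~\ref{key2}. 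I will repeatedly use $K_{g_0}\equiv1$, the identities $G_{\Omega_j}\circ f_j=\log|\cdot|$ and $\mu_j\circ f_j=\tfrac12\log(|\nabla f_j|^2/2)$ from \eqref{thA5} (on $\Omega_1$ this is \eqref{defmu}: $\mu_1\circ f_1(z)=\log|f'(z)|-\log(1+|f(z)|^2)+\log 2$), together with $f_1^{\ast}g_0=\tfrac{4|f'|^2}{(1+|f|^2)^2}|dz|^2$ and, since $f_2=\pi^{-1}\circ g\circ\ii$, the relation $\tfrac12|\nabla f_2(z)|^2=\tfrac{4|g'(1/z)|^2}{|z|^4(1+|g(1/z)|^2)^2}$.

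First I would treat the Dirichlet terms. Conformal invariance gives $\int_{\Omega_1}|d\mu_1|^2_{g_0}\,d\mathrm{vol}_{g_0}=\int_{\mathbb{D}}|\nabla(\mu_1\circ f_1)|^2|dz|^2$, and $\partial_z(\mu_1\circ f_1)=\tfrac12\tfrac{f''}{f'}-\tfrac{f'\bar f}{1+|f|^2}$ yields $|\nabla(\mu_1\circ f_1)|^2=\bigl|\tfrac{f''}{f'}-2\tfrac{f'}{f}\tfrac{|f|^2}{1+|f|^2}\bigr|^2$, the first integrand of $S_3$. For $\Omega_2$, the relation above gives $\mu_2\circ f_2(1/w)=\log 2+\log|g'(w)|+2\log|w|-\log(1+|g(w)|^2)=:\rho(w)$; since $z\mapsto 1/z$ preserves the Dirichlet integral, $\int_{\Omega_2}|d\mu_2|^2_{g_0}\,d\mathrm{vol}_{g_0}=\int_{\mathbb{C}\setminus\bar{\mathbb{D}}}|\nabla\rho|^2|dw|^2$, and $\partial_w\rho=\tfrac12\tfrac{g''}{g'}+\tfrac1w-\tfrac{g'\bar g}{1+|g|^2}$ gives exactly the second integrand of $S_3$.

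Next I would treat the curvature terms. Since $K_{g_0}=1$, $2\int_{\Omega_j}G_{\Omega_j}K_{g_0}\,d\mathrm{vol}_{g_0}=2\int_{\Omega_j}G_{\Omega_j}\,d\mathrm{vol}_{g_0}$. Pulling back by $f_1$ with $G_{\Omega_1}\circ f_1=\log|z|$ and $f_1^{\ast}d\mathrm{vol}_{g_0}=\tfrac{4|f'|^2}{(1+|f|^2)^2}|dz|^2$ gives the third term of $S_3$; pulling back by $f_2$ and then substituting $z=1/w$ — so that $\log|z|=-\log|w|$ and the factor $|w|^{-4}$ coming from $|dz|^2$ cancels the $|z|^{-4}$ in the conformal density — gives $-2\int_{\mathbb{C}\setminus\bar{\mathbb{D}}}\log|w|\tfrac{4|g'|^2}{(1+|g|^2)^2}|dw|^2$, the fourth term of $S_3$. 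The additive $4\pi$ in $\mathscr{E}$ is the additive $4\pi$ in $S_3$.

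It remains to handle the boundary constants. From $\tfrac12|\nabla f_1(0)|^2=\tfrac{4|f'(0)|^2}{(1+|f(0)|^2)^2}$ one gets $|\nabla f_1(0)|=\tfrac{2\sqrt2\,|f'(0)|}{1+|f(0)|^2}$, and from the expansion $g(w)=g'(\infty)w+O(1)$ near $\infty$ one gets $\tfrac12|\nabla f_2(0)|^2=\lim_{w\to\infty}\tfrac{4|g'(w)|^2|w|^4}{(1+|g(w)|^2)^2}=\tfrac{4}{|g'(\infty)|^2}$, so $|\nabla f_2(0)|=\tfrac{2\sqrt2}{|g'(\infty)|}$. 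Substituting, $4\pi\log|\nabla f_1(0)|+4\pi\log|\nabla f_2(0)|-12\pi\log 2=4\pi\log|f'(0)|-4\pi\log|g'(\infty)|-4\pi\log(1+|f(0)|^2)$, which is exactly the last line of the definition of $S_3$. Summing the matched contributions then gives $\mathscr{E}_0(\Gamma)=S_3(\Gamma)$. The individual computations are routine; the only delicate bookkeeping will be the inversion $z\mapsto 1/z$ in the $\Omega_2$ integrals — Jacobian-free for the Dirichlet energy, sign-flipping for $\log|z|$, with the area $2$-form Jacobian cancelling the $|z|^{-4}$ in the density — and getting the powers of $2$ right in $|\nabla f_1(0)|$ and $|\nabla f_2(0)|$. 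All integrals are finite by the finite Loewner energy hypothesis (Theorem~\ref{thm:equi_WP}), so every manipulation is legitimate for a general Weil-Petersson quasicircle.
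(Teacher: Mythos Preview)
Your proposal is correct and follows essentially the same route as the paper: pull back each piece of $\mathscr{E}_0$ to $\mathbb{D}$ via $f_1,f_2$, translate to $f=\pi\circ f_1$ and $g=\pi\circ f_2\circ\ii$ using the explicit conformal factors, and match term by term with $S_3$, handling the $\Omega_2$ contributions by the inversion $z\mapsto 1/z$. The paper introduces the intermediate map $\tilde g=\pi\circ f_2:\mathbb{D}\to\mathbb{C}\setminus\bar\Omega$ and verifies the finiteness of the various integrals more carefully (see \eqref{l2} and the expansion \eqref{est01} near the pole of $\tilde g$), whereas you dispose of finiteness in one sentence at the end; your computations of the integrands and of the constants $|\nabla f_j(0)|$ are otherwise identical to the paper's \eqref{n_loewner4}--\eqref{constante}.
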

	 \begin{proof}
	 	If $\Gamma\subset S^2$ is a curve of finite Loewner energy $\Omega_1$ and $\Omega_2$ the two connected components of $S^2\setminus\Gamma$, and $f_1:\mathbb{D}\rightarrow \Omega_1$, $f_2:\mathbb{D}\rightarrow \Omega_2$ are the 
	 	conformal maps associated to $\Gamma$ in the definition of $\mathscr{E}$ {with $f_j (0) = p_j$ for $j= 1,2$}.
	 	Now, recall {from \eqref{eq:change_factor_mu}} that 
	 	\begin{align*}
	 		\log|\D f_1|=\frac{1}{2}\log(2)+\mu_1\circ f_1.
	 	\end{align*}
	 	We have by conformal invariance of the Dirichlet energy
	 	\begin{align}\label{n_loewner}
	 		\int_{\mathbb{D}}\left|\D\log|\D f_1|\right|^2|dz|^2=\int_{\mathbb{D}}\left|\D (\mu_1\circ f_1)\right|^2|dz|^2=\int_{\Omega_1}|d\mu_1|_{g_0}^2d\mathrm{vol}_{g_0}.
	 	\end{align}
Since $f_1$ is conformal and $f_1(0) = p_1$, we have 
\begin{align*}
G_{\Omega_1} \circ f_1(z) = G_{\Omega_1,p_1} \circ f_1 (z) = G_{\mathbb D,0} (z) = \log |z|.
\end{align*}
	 {A change of variable gives}
	 	\begin{align}\label{n_loewner2}
	 		2\int_{\Omega_1}G_{\Omega_1}d\mathrm{vol}_{g_0}=\int_{\mathbb{D}}\log|z||\D f_1|^2|dz|^2.
	 	\end{align}
	 	Finally, we deduce by \eqref{n_loewner} and \eqref{n_loewner2} that 
	 	\begin{align}\label{n_loewner3}
	 		\mathscr{E}({\Gamma})
	 		&=\int_{\Omega_1}|d\mu_1|^2_{g_0}d\mathrm{vol}_{g_0}+\int_{\Omega_2}|d\mu_2|^2_{g_0}d\mathrm{vol}_{g_0}+2\int_{\Omega_1}G_{\Omega_1}K_{g_0}d\mathrm{vol}_{g_0}+2\int_{\Omega_2}G_{\Omega_2}K_{g_0}d\mathrm{vol}_{g_0}+4\pi\nonumber\\
	 		&+4\pi\log|\D f_1(0)|+4\pi\log|\D f_2(0)|-12\pi\log(2)\nonumber\\
	 		&=\int_{\mathbb{D}}|\D\log|\D f_1||^2|dz|^2+\int_{\mathbb{D}}|\D\log|\D f_2||^2|dz|^2+\int_{\mathbb{D}}\log|z||\D f_1|^2|dz|^2+\int_{\mathbb{D}}\log|z||\D f_2|^2|dz|^2+4\pi\nonumber\\
	 		&+4\pi\log|\D f_1(0)|+4\pi\log|\D f_2(0)|-12\pi\log(2).
	 	\end{align}
	 	
	 	Up to a rotation of $S^2$, we can assume that $p_2=N$ and if $\pi:S^2\setminus\ens{N}\rightarrow \C$ is the standard stereographic projection, let 
	 	\begin{align*}
	 		f&=\pi\circ f_1:\mathbb{D}\rightarrow \Omega\\
	 		\tilde{g}&=\pi\circ f_2:\mathbb{D}\rightarrow \C\setminus\bar{\Omega}
	 	\end{align*}
	 {which we assume without loss of generality to be biholomorphic (up to a complex conjugation)}.
	 	Now, since
	 	\begin{align*}
	 		f_1(z)=\pi^{-1}(f(z))=\left(\frac{2\,\Re(f(z))}{1+|f(z)|^2},\frac{2\,\Im(f(z))}{1+|f(z)|^2},\frac{-1+|f(z)|^2}{1+|f(z)|^2}\right),
	 	\end{align*}
	 	a computation shows that
	 	\begin{align*}
	 		\p{z}f_1=f'\left(\frac{(1-\bar{f}^2)}{(1+|f|^2)^2},\frac{-i(1+\bar{f}^2)}{(1+|f|^2)^2},\frac{2\bar{f}}{(1+|f|^2)^2}\right),
	 	\end{align*}
	 	which implies that 
	 	\begin{align*}
	 		|\p{z}f_1|^2=\frac{|f'|^2}{(1+|f|^2)^4}\left(|1-f^2|^2+|1+f^2|^2+4|f|^2\right)=\frac{2|f'|^2}{(1+|f|^2)^2}.
	 	\end{align*}
	 	{W}e deduce that
	 	\begin{align}\label{twice_conf_factor}
	 		|\D f_1|^2=4|\p{z}f_1|^2=\frac{8|f'|^2}{(1+|f|^2)^2}.
	 	\end{align}
	 	Therefore, we have
	 	\begin{align*}
	 		\log|\D f_1|=\log|f'|-\log(1+|f|^2)+\frac{3}{2}\log(2),
	 	\end{align*}
	 	so that 
	 	\begin{align}\label{constantes1}
	 		&4\pi \log|\D f_1(0)|=4\pi\log|f'(0)|-4\pi\log(1+|f(0)|^2)+6\pi\log(2)
	 	\end{align}
	 	Since $\Omega=f(\mathbb{D})$ is compact,  
         we have
	 	\begin{align}\label{l2}
	 	\int_{\mathbb{D}}|f'(z)|^2\frac{|f(z)|^2|dz|^2}{(1+|f(z)|^2)^2}\leq \frac{1}{4}\int_{\mathbb{D}}|f'(z)|^2|dz|^2=\frac{1}{4}\mathrm{Area}(\Omega)<\infty.
        \end{align}
	 	Therefore, \eqref{l2} implies that $\D\log|\D f_1|\in  L^{2}(\mathbb{D})$ and 
	 	\begin{align}\label{n_loewner4}
	 		\int_{\mathbb{D}}|\D\log|\D f_1||^2|dz|^2=\int_{\mathbb{D}}\left|\frac{f''(z)}{f'(z)}-2\frac{f'(z)}{f(z)}\frac{|f(z)|^2}{1+|f(z)|^2}\right|^2|dz|^2<\infty,
	 	\end{align}
	 	while \eqref{twice_conf_factor} implies that 
	 	\begin{align}\label{n_loewner5}
	 		\int_{\mathbb{D}}\log|z||\D f_1|^2|dz|^2=\int_{\mathbb{D}}\log|z|\frac{8|f'(z)|^2|dz|^2}{(1+|f(z)|^2)^2}<\infty,
	 	\end{align}
        which is finite by \eqref{l2} and the smoothness of $f$ in $\mathbb{D}$.
	 	Since the function $\tilde{g}:\mathbb{D}\rightarrow \C\setminus\bar{\Omega}$ is unbounded at $0$, we do not see trivially that 
	 	\begin{align*}
	 	\int_{\mathbb{D}}\left|\frac{\tilde{g}''(z)}{\tilde{g}'(z)}-2\frac{\tilde{g}'(z)\bar{\tilde{g}(z)}}{1+|\tilde{g}(z)|^2}\right|^2|dz|^2=\int_{\mathbb{D}}\left|\frac{\tilde{g}''(z)}{\tilde{g}'(z)}-2\frac{\tilde{g}'(z)}{g(z)}\frac{|\tilde{g}(z)|^2}{1+|\tilde{g}(z)|^2}\right|^2|dz|^2<\infty.
	 	\end{align*}
	 	{For this,} as 
	 	$\tilde{g}$ is univalent and $\tilde{g}(0)=\infty$, we deduce that $\tilde{g}$ admits the following meromorphic expansion at $z=0$ for some $a\in \C\setminus\ens{0}$ and $a_0,a_1\in\C$ 
	 	\begin{align}\label{eq:g_tilde_expansion}
	 		\tilde{g}(z)=\frac{a}{z}+a_0+a_1z+O(|z|^2).
	 	\end{align}
	 	Therefore, we have by a direct computation
	 	\begin{align}\label{est01}
	 		\frac{\tilde{g}''(z)}{\tilde{g}'(z)}-2\frac{\tilde{g}'(z)}{\tilde{g}(z)}\frac{|\tilde{g}(z)|^2}{1+|\tilde{g}(z)|^2}
	 		&=-\frac{a_0}{a}+\left(\frac{a_0^2}{a^2}-\frac{4a_1}{a}\right)z-\frac{\z}{|a|^2}+O(|z|^2)\in L^{\infty}_{\mathrm{loc}}(\mathbb{D}).
	 	\end{align}
	 	Since $\Gamma$ is a Weil-Petersson quasicircle, we deduce by estimates similar to \eqref{l2} and \eqref{n_loewner4} that $\D\log|\tilde{g}'|\in L^2(\mathbb{D}\setminus\bar{\mathbb{D}}(0,\epsilon))$ and $\tilde g' \in L^2(\mathbb{D}\setminus\bar{\mathbb{D}}(0,\epsilon))$  for all $\epsilon>0$ and we finally deduce that 
	 	\begin{align*}
	 		\int_{\mathbb{D}}\left|\frac{\tilde{g}''(z)}{\tilde{g}'(z)}-2\frac{\tilde{g}'(z)}{\tilde{g}(z)}\frac{|\tilde{g}(z)|^2}{1+|\tilde{g}(z)|^2}\right|^2|dz|^2<\infty.
	 	\end{align*} 	
	 	Now, if ${g}=\tilde{g}\circ{\ii}:\C\setminus\bar{\mathbb{D}}\rightarrow \C\setminus\bar{\Omega}$, we compute
	 	and
	 	\begin{align*}
	 		\frac{\tilde{g}''(z)}{\tilde{g}'(z)}-2\frac{\tilde{g}'(z)}{\tilde{g}(z)}\frac{|\tilde{g}(z)|^2}{1+|\tilde{g}(z)|^2}
	 		&=-\frac{1}{z^2}\left(\frac{{g}''(1/z)}{{g}'(1/z)}-2\frac{{g}'(1/z)}{{g}(1/z)}\frac{|{g}(1/z)|^2}{1+|{g}(1/z)|^2}+2z\right).
	 	\end{align*}
	 	A change of variable shows that
	 	\begin{align}\label{n_loewner6}
	 		\int_{\mathbb{D}}\left|\frac{\tilde{g}''(z)}{\tilde{g}'(z)}-2\frac{\tilde{g}'(z)}{\tilde{g}(z)}\frac{|\tilde{g}(z)|^2}{1+|\tilde{g}(z)|^2}\right|^2|dz|^2=\int_{\C\setminus\bar{\mathbb{D}}}\left|\frac{{g}''(z)}{{g}'(z)}-2\frac{{g}'(z)}{{g}(z)}\frac{|{g}(z)|^2}{1+|{g}(z)|^2}+\frac{2}{z}\right|^2|dz|^2
	 	\end{align}
	 	Furthermore, we directly get
	 	\begin{align}\label{n_loewner7}
	 		\int_{\mathbb{D}}\log|z|\frac{|\tilde{g}'(z)|^2|dz|^2}{(1+|\tilde{g}(z)|^2)^2}&=\int_{\mathbb{D}}\log|z|\frac{|{g}'(1/z)|^2}{(1+|{g}(1/z)|^2)^2}\frac{|dz|^2}{|z|^4}
	 		=\int_{\mathbb{C}\setminus\bar{\mathbb{D}}}\log\left(\frac{1}{|z|}\right)\frac{|{g}'(z)|^2|dz|^2}{(1+|{g}(z)|^2)^2}\nonumber\\
	 		&=-\int_{\C\setminus\bar{\mathbb{D}}}\log|z|\frac{|g'(z)|^2|dz|^2}{(1+|g(z)|^2)^2}.
	 	\end{align}
	 	Now, notice that 
	 	\begin{align*}
	 		|\D f_2|^2=\frac{8|g'(z)|^2}{(1+|g(z)|^2)^2}=\frac{8}{|a|^2}+O(|z|),
	 	\end{align*}
	 	which implies that 
	 	\begin{align*}
	 		\log|\D f_2|=\frac{3}{2}\log(2)-\log|a|.
	 	\end{align*}
	 	Furthermore, the {expansion \eqref{eq:g_tilde_expansion}} shows that as $|z|\rightarrow \infty$, we have 
	 	\begin{align*}
	 	{g}(z)=az+O(1),
	 	\end{align*}
	 	so that $|a|=|{g}'(\infty)|$,
	 	and
	 	\begin{align}\label{constantes2}
	 		4\pi\log|\D f_2(0)|=-4\pi\log|{g}'(\infty)|+6\pi\log(2).
	 	\end{align}
	 	Finally, we deduce by \eqref{constantes1} and \eqref{constantes2} that 
	 	\begin{align}\label{constante}
	 		4\pi\log|\D f_1(0)|+4\pi\log|\D f_2(0)|-12\pi\log(2)=4\pi\log|f'(0)|-4\pi\log{g}'(\infty)|-4\pi\log(1+|f(0)|^2).
	 	\end{align}
	 	Gathering \eqref{n_loewner3}, \eqref{n_loewner4}, \eqref{n_loewner5}, \eqref{n_loewner6}, \eqref{n_loewner7}, and \eqref{constante},
	 	we finally deduce that
	 	\begin{align*}
	 		\mathscr{E}(\Gamma)&=\int_{\mathbb{D}}|\D\log|\D f_1||^2|dz|^2+\int_{\mathbb{D}}|\D\log|\D f_2||^2|dz|^2+\int_{\mathbb{D}}\log|z||\D f_1|^2|dz|^2+\int_{\mathbb{D}}\log|z||\D f_2|^2|dz|^2+4\pi\nonumber\\
	 		&+4\pi\log|\D f_1(0)|+4\pi\log|\D f_2(0)|-12\pi\log(2)\\
	 		&=\int_{\mathbb{D}}\left|\frac{f''(z)}{f'(z)}-2\frac{f'(z)}{f(z)}\frac{|f(z)|^2}{1+|f(z)|^2}\right|^2|dz|^2+\int_{\C\setminus\bar{\mathbb{\mathbb{D}}}}\left|\frac{{g}''(z)}{{g}'(z)}-2\frac{{g}'(z)}{{g}(z)}\frac{|{g}(z)|^2}{1+|{g}(z)|^2}+\frac{2}{z}\right|^2|dz|^2\\
	 		&+2\int_{\mathbb{D}}\log|z|\frac{4|f'(z)|^2|dz|^2}{(1+|f(z)|^2)^2}-2\int_{\C\setminus\bar{\mathbb{D}}}\log|z|\frac{4|{g}'(z)|^2|dz|^2}{(1+|{g}(z)|^2)^2}+4\pi\\
	 		&+4\pi\log|f'(0)|-4\pi\log|\tilde{g}'(\infty)|-4\pi\log(1+|f(0)|^2)\\
	 		&=S_3(\Gamma)
	 	\end{align*}
	 	which concludes the proof of the theorem. 
	 \end{proof}
	 \begin{rem}
	 	If $\Gamma=S^1$, then we can take $f=\mathrm{Id}_{\mathbb{D}}$ and $g=\mathrm{Id}_{\C\setminus\bar{\mathbb{D}}}$, 
	 	and we compute
	 	\begin{align*}
	 		S_3(\Gamma)&=\int_{\mathbb{D}}\left|\frac{f''(z)}{f'(z)}-2\frac{f'(z)}{f(z)}\frac{|f(z)|^2}{1+|f(z)|^2}\right|^2|dz|^2+\int_{\C\setminus\bar{\mathbb{\mathbb{D}}}}\left|\frac{g''(z)}{g'(z)}-2\frac{g'(z)}{g(z)}\frac{|g(z)|^2}{1+|g(z)|^2}+\frac{2}{z}\right|^2|dz|^2\\
	 		&+2\int_{\mathbb{D}}\log|z|\frac{4|f'(z)|^2|dz|^2}{(1+|f(z)|^2)^2}-2\int_{\C\setminus\bar{\mathbb{D}}}\log|z|\frac{4|g'(z)|^2|dz|^2}{(1+|g(z)|^2)^2}+4\pi\\
	 		&+4\pi\log|f'(0)|-4\pi\log|g'(\infty)|-4\pi\log(1+|f(0)|^2)\\
	 		&=8\int_{\mathbb{D}}\frac{|z|^2|dz|^2}{(1+|z|^2)^2}+16\int_{\mathbb{D}}\log|z|\frac{|z|^2|dz|^2}{(1+|z|^2)^2}+4\pi\\
	 		&=16\pi\int_{0}^1\frac{r^3dr}{(1+r^2)^2}+32\pi\int_{0}^1\frac{r\log(r)dr}{(1+r^2)^2}+4\pi\\
	 		&=16\pi\left(\frac{1}{4}(2\log(2)-1)\right)+32\pi\left(-\frac{1}{4}\log(2)\right)+4\pi\\
	 		&=0
	 	\end{align*}
	 	as expected.
	 \end{rem}
	 
	 In the next theorem, we finally complete the proof of \eqref{eq:chain_eq} by showing that $\pi I^L(\Gamma)=S_3(\Gamma)$.
	 
	 \begin{theorem}\label{s3}
	 	Let $\Gamma\subset S^2$ be a closed simple curve of finite Loewner energy. Then
	 	we have
	 	\begin{align*}
	 		 I^L(\Gamma)=\frac{1}{\pi}\mathscr{E}_0(\Gamma),
	 	\end{align*}
	 	where $\mathscr{E}_0$ is defined in 
	 	\eqref{thA4}. 
	 	Furthermore, if $\Omega_1,\Omega_2\subset S^2\setminus\Gamma$ are the two connected components of $S^2\setminus\Gamma$, for all conformal maps $f_1:\mathbb{D}\rightarrow \Omega_1$ and $f_2:\mathbb{D}\rightarrow \Omega_2$, we have
	 	\begin{align*}
	 	I^L(\Gamma)&=\frac{1}{\pi}\sum_{j=1}^{2}\left(\int_{\mathbb{D}}|\nabla\log|\nabla f_j||^2|dz|^2+\int_{\mathbb{D}}\log|z||\nabla f_j|^2|dz|^2+\mathrm{Area}(\Omega_j)
	 		+4\pi\log|\nabla f_j(0)|\right)-12\log(2)
	 	\end{align*}
	 	\normalsize
	 \end{theorem}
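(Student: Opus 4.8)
Since Theorem~\ref{third_loewner} already provides $\mathscr{E}_0(\Gamma)=S_3(\gamma)$ for every curve of finite Loewner energy, and Theorem~\ref{loewner_weil} provides $\pi I^L(\Gamma)=I^L(\gamma)\cdot\pi=S_1(\gamma)$ for every Weil-Petersson quasicircle — here $\gamma=\pi(\Gamma)$ with $\pi$ a stereographic projection sending the marked point $p_2$ to $\infty$ — the whole theorem reduces to the identity $S_1(\gamma)=S_3(\gamma)$. This identity is already known when $\Gamma$ is \emph{smooth}: then Theorem~\ref{wp10} gives $\pi I^L(\Gamma)=\mathscr{E}_0(\Gamma)$, whence $S_1(\gamma)=\pi I^L(\Gamma)=\mathscr{E}_0(\Gamma)=S_3(\gamma)$. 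The plan is to promote it to all Weil-Petersson quasicircles by approximation; this is exactly where the formula $\mathscr{E}_0=S_3$ pays off, since $S_3$ — unlike $\mathscr{E}$, which is defined through boundary-value problems on the curved domains $\Omega_j$ — is a transparently continuous integral functional of the uniformizing maps.

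After a rotation $R$ of $S^2$, under which $I^L$ is invariant and under which $\mathscr{E}_0$ is invariant as well (the round metric, the Green's functions, the Dirichlet energies and the areas are isometry-invariant, and $|\nabla(R\circ f_j)|=|\nabla f_j|$), we may assume that $\Gamma$ lies in a fixed compact subset of $S^2\setminus\{N\}$, so $\gamma$ lies in a fixed bounded region of $\C$. By the approximation theory of the Weil-Petersson class (\cite{takteo}; see also \cite{yilinvention}), choose smooth Jordan curves $\Gamma_n\to\Gamma$ lying in that same compact set, together with conformal maps $f_{1,n}:\mathbb{D}\to\Omega_{1,n}$, $f_{2,n}:\mathbb{D}\to\Omega_{2,n}$ with $f_{j,n}(0)=p_{j,n}\to p_j$, such that, writing $f_n=\pi\circ f_{1,n}$ and $g_n=\pi\circ f_{2,n}\circ\ii$ normalized so that $g_n(\infty)=\infty$, one has $f_n\to f$ and $g_n\to g$ locally uniformly, $f_n''/f_n'\to f''/f'$ in $L^2(\mathbb{D})$, $g_n''/g_n'\to g''/g'$ in $L^2(\C\setminus\bar{\mathbb{D}})$, $f_n'(0)\to f'(0)$ and $g_n'(\infty)\to g'(\infty)$; in particular $S_1(\gamma_n)\to S_1(\gamma)$.

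It then remains to check $S_3(\gamma_n)\to S_3(\gamma)$, which one does term by term in Definition~\ref{def:s3}. The quantities $f_n''/f_n'-2f_n'\bar{f_n}/(1+|f_n|^2)$ converge in $L^2(\mathbb{D})$: the first summand by the convergence above, the second by dominated convergence using the pointwise bound $|f_n'\bar{f_n}/(1+|f_n|^2)|\le\tfrac12|f_n'|$ and the uniform bound $\int_{\mathbb{D}}|f_n'|^2\le C$ coming from the compactness normalization. The analogous $g$-integrand is treated similarly, using in addition the meromorphic expansion $g_n(z)=a_nz+b_n+O(1/z)$ near $\infty$ with $a_n\to a\neq 0$ and $b_n$ bounded to control the integrand uniformly for large $|z|$. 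The logarithmically weighted terms are handled most conveniently in the form $2\int_{\Omega_{j,n}}G_{\Omega_{j,n}}\,d\mathrm{vol}_{g_0}$ (using $G_{\Omega_{j,n}}\circ f_{j,n}=\log|z|$), where the Green's functions converge locally uniformly and are uniformly dominated by a fixed integrable function; the area terms converge by Carathéodory convergence of the $\Omega_{j,n}$; and the remaining constant, point and $\infty$ terms converge trivially. Passing to the limit gives $S_1(\gamma)=S_3(\gamma)$, hence $I^L(\Gamma)=\tfrac1\pi S_3(\gamma)=\tfrac1\pi\mathscr{E}_0(\Gamma)$. I expect this last step — propagating the Weil-Petersson convergence of the conformal maps through the logarithmically weighted integrals and through the behaviour near $\infty$ of the exterior map $g_n$ — to be the main obstacle, since it requires uniform integrability estimates that are not purely formal and that rely on the a priori control furnished by finiteness of the Loewner energy and by the normalization confining the curves to a fixed compact set.

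For the second displayed identity, note that for \emph{any} conformal maps $f_j:\mathbb{D}\to\Omega_j$ the stated right-hand side equals $\mathscr{E}_0(\Gamma)$ computed with the moving frames whose singularities are $p_j:=f_j(0)$: by \eqref{eq:change_factor_mu} we have $\mu_j\circ f_j=\log|\nabla f_j|-\tfrac12\log 2$, so conformal invariance of the Dirichlet energy gives $\int_{\Omega_j}|d\mu_j|_{g_0}^2\,d\mathrm{vol}_{g_0}=\int_{\mathbb{D}}|\nabla\log|\nabla f_j||^2|dz|^2$; the change of variables $f_j^{\ast}\,d\mathrm{vol}_{g_0}=\tfrac12|\nabla f_j|^2|dz|^2$ together with $G_{\Omega_j}\circ f_j=\log|z|$ and $K_{g_0}\equiv 1$ gives $2\int_{\Omega_j}G_{\Omega_j}K_{g_0}\,d\mathrm{vol}_{g_0}=\int_{\mathbb{D}}\log|z||\nabla f_j|^2|dz|^2$; and $\mathrm{Area}(\Omega_1)+\mathrm{Area}(\Omega_2)=4\pi$ absorbs the additive constant $4\pi$ in $\mathscr{E}$. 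Since the first part of the theorem, applied (with the same approximation) to arbitrary triples $(\Gamma,p_1,p_2)$, shows that $\mathscr{E}_0(\Gamma)=\pi I^L(\Gamma)$ for \emph{every} choice of singularities $p_1\in\Omega_1$, $p_2\in\Omega_2$, the right-hand side is independent of the chosen conformal maps and equals $\pi I^L(\Gamma)$, which is the assertion.
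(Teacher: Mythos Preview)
Your approach is essentially the same as the paper's: reduce to the identity $S_1=S_3$, which holds for smooth curves by combining Theorems~\ref{wp10} and~\ref{third_loewner}, and then pass to the limit along a smooth approximation using the Weil--Petersson topology of \cite{takteo,yilinvention}. The paper carries this out with the explicit equiradial approximation $f_n(z)=f((1-\epsilon_n)z)/(1-\epsilon_n)$ rather than invoking approximation abstractly, and it secures the strong $L^2(\mathbb{D})$ convergence $f_n'\to f'$ via the Brezis--Lieb lemma (a.e.\ convergence together with $\|f_n'\|_{L^2}\to\|f'\|_{L^2}$); note that your phrase ``dominated convergence using the uniform bound $\int_{\mathbb{D}}|f_n'|^2\le C$'' is not literally a dominated-convergence argument, and Brezis--Lieb (or Vitali via uniform integrability) is exactly what fills that step. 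Your treatment of the second displayed formula, rewriting each summand as a piece of $\mathscr{E}_0$ via \eqref{eq:change_factor_mu}, $G_{\Omega_j}\circ f_j=\log|z|$ and $\mathrm{Area}(\Omega_1)+\mathrm{Area}(\Omega_2)=4\pi$, matches the computation \eqref{n_loewner3} in the paper.
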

	 \vspace{-2.8em}
	 	          	 \begin{figure}[H]
	 	          	 	\centering
	 	 	\includegraphics[width=0.5\textwidth]{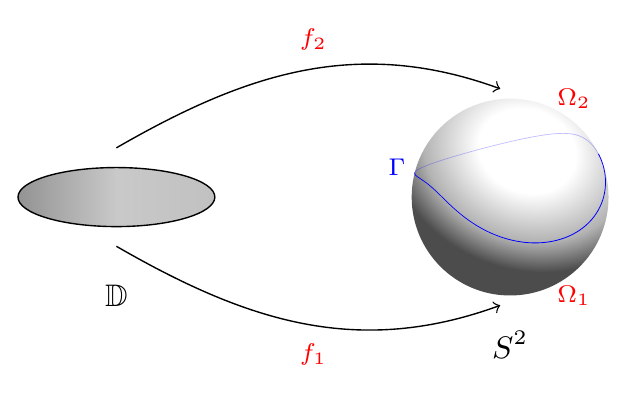}
	 	          	 	\vspace{-1em}
	 	          	 	\caption{Spherical formula for the Loewner energy with respect to conformal maps. 
	 	          	 	} 
	 	          	 \end{figure}

	 \begin{proof}
	    By Theorem \ref{wp10}, we have the identity $I^L(\Gamma)=\frac{1}{\pi}\mathscr{E}_0(\Gamma)$ for all smooth $\Gamma$, and by the preceding Theorem \ref{third_loewner}, we have $\mathscr{E}_0(\Gamma)=S_3(\Gamma)$ for any Jordan curve $\Gamma$ of finite Loewner energy. Therefore, we will prove that $I^L=\frac{1}{\pi }   S_3$ which will imply our result.

	 	We now let $\Omega_1,\Omega_2\subset S^2\setminus {\Gamma}$ be the two connected components of $S^2\setminus\Gamma$, and $f_1:\mathbb{D}\rightarrow \Omega_1$, $f_2:\mathbb{D}\rightarrow \Omega_2$ be the two 
	 	 conformal maps associated to $\Omega_1$ and $\Omega_2$, 
	 	 and let 
	 	 $p_1=f_1(0)$ and $p_2=f_2(0)$. Up to a rotation on $S^2$ (which does not change any of the energies considered), we can assume that $p_2=N$. If $\pi:S^2\setminus\ens{N}\rightarrow \C$ is the standard stereographic projection, let $\gamma=\pi(\Gamma)$, and $\Omega$ the bounded component of $\C\setminus\gamma$ and define $f=\pi\circ f_1:\mathbb{D}\rightarrow \pi(\Omega_1)=\Omega$ and $g=\pi\circ f_2\circ{\ii}:\C\setminus\bar{\mathbb{D}}\rightarrow \C\setminus\bar{\Omega}$ such that (using Theorem \ref{third_loewner})
	 	\begin{align}\label{s31}
	 		\mathscr{E}_0(\Gamma)&=S_3(\gamma)=\int_{\mathbb{D}}\left|\frac{f''(z)}{f'(z)}-2\frac{f'(z)}{f(z)}\frac{|f(z)|^2}{1+|f(z)|^2}\right|^2|dz|^2+\int_{\C\setminus\bar{\mathbb{\mathbb{D}}}}\left|\frac{g''(z)}{g'(z)}-2\frac{g'(z)}{g(z)}\frac{|g(z)|^2}{1+|g(z)|^2}+\frac{2}{z}\right|^2|dz|^2\nonumber\\
	 		&+2\int_{\mathbb{D}}\log|z|\frac{4|f'(z)|^2|dz|^2}{(1+|f(z)|^2)^2}-2\int_{\C\setminus\bar{\mathbb{D}}}\log|z|\frac{4|g'(z)|^2|dz|^2}{(1+|g(z)|^2)^2}+4\pi\nonumber\\
	 		&+4\pi\log|f'(0)|-4\pi\log|g'(\infty)|-4\pi\log(1+|f(0)|^2).
	 	\end{align}
	 	Now, by Corollary A.$4$ of \cite{takteo} and Theorem $8.1$ \cite{yilinvention}, if $\ens{\gamma_n}_{n\in\N}$ is a sequence of smooth curves converging uniformly to a simple curve $\gamma$ and such that for a sequence of maps $f_n:\mathbb{D}\rightarrow \C$ such that $f_n(0)=0$, $f_n'(0)=1$ and $f_n(\mathbb{D})=\Omega_n$, where $\Omega_n$ is the bounded component of $\C\setminus \gamma_n$, and satisfies
	 	\begin{align*}
	 		\lim\limits_{n\rightarrow \infty}\int_{\mathbb{D}}\left|\frac{f_n''(z)}{f_n'(z)}-\frac{f''(z)}{f'(z)}\right|^2|dz|^2=0,
	 	\end{align*}
	 	where $f:\mathbb{D}\rightarrow \Omega$ is a univalent function such that $f(0)=0$ and $f'(0)=1$, we have
	 	\begin{align}\label{convergence}
	 		I^L(\Gamma_n)\conv{n\rightarrow \infty}I^L(\Gamma).
	 	\end{align}
	 	In particular, for any sequence of holomorphic maps $g_n:\C\setminus\bar{\mathbb{D}}\rightarrow \C\setminus\bar{\Omega_n}$ such that $g_n(\infty)=\infty$, since
	 	\begin{align*}
	 		S_1(\Gamma_n)=\pi\, I^L(\Gamma_n)&=\int_{\mathbb{D}}\left|\frac{f''_n(z)}{f_n'(z)}\right|^2|dz|^2+\int_{\C\setminus\bar{\mathbb{D}}}\left|\frac{g_n''(z)}{g_n'(z)}\right|^2|dz|^2+4\pi\log|f_n'(0)|-4\pi\log|g_n'(\infty)|\\
	 		&=\int_{\mathbb{D}}\left|\frac{f''_n(z)}{f_n'(z)}\right|^2|dz|^2+\int_{\C\setminus\bar{\mathbb{D}}}\left|\frac{g_n''(z)}{g_n'(z)}\right|^2|dz|^2-4\pi\log|g_n'(\infty)|
	 	\end{align*}
	 	we deduce that 
	 	\begin{align}\label{additional}
	 		\int_{\C\setminus\bar{\mathbb{D}}}\left|\frac{g_n''(z)}{g_n'(z)}\right|^2|dz|^2-4\pi\log|g_n'(\infty)|\conv{n\rightarrow \infty}\int_{\C\setminus\bar{\mathbb{D}}}\left|\frac{g''(z)}{g'(z)}\right|^2|dz|^2-4\pi\log|g'(\infty)|
	 	\end{align}
	 	for all univalent function $g:\C\setminus\bar{\mathbb{D}}\rightarrow\C\setminus\bar{\Omega}$ such that $g(\infty)=\infty$.
	 	Now, if $\gamma=\pi(\Gamma)\subset \C$, let $\ens{\epsilon_n}_{n\in\N}\subset (0,\infty)$ such that $\epsilon_n\conv{n\rightarrow \infty}0$, and define 
	 	\begin{align*}
	 		f_n:\mathbb{D}&\rightarrow \C\\
	 		z&\mapsto f((1-\epsilon_n)z)/{(1-\epsilon_n)}.
	 	\end{align*}
	 	Then $\gamma_n=f_n(S^1)$ is smooth and uniformly converges to $\gamma$. Furthermore, we have
	 	\begin{align}\label{first_convergence}
	 		\int_{\mathbb{D}}\left|\frac{f''_n(z)}{f'_n(z)}-\frac{f''(z)}{f'(z)}\right|^2|dz|^2\conv{n\rightarrow \infty}0.
	 	\end{align}
	 	which implies that 
	 	\begin{align*}
	 		I^L(\gamma_n)=\frac{1}{\pi}S_1(\gamma_n)\conv{n\rightarrow \infty}\frac{1}{\pi}S_1(\gamma)=I^L(\gamma).
	 	\end{align*}
        Now, we need to show the result $f_n'\conv{n\rightarrow{ \infty}}f'$ in $L^2(\mathbb{D})$ strongly. 
        Notice that since $f'$ is smooth in $\mathbb{D}$, we have by construction $f_n'\conv{n\rightarrow\infty}f'$ almost everywhere. Furthermore, a linear change of variable shows that
        \begin{align*}
            \int_{\mathbb{D}}|f_n'(z)|^2|dz|^2=\int_{\mathbb{D}}|f'((1-\epsilon_n)z))|^2|dz|^2&=\frac{1}{(1-\epsilon_n)^2}\int_{\mathbb{D}(0,1-\epsilon_n)}|f'(w)|^2|dw|^2
            \conv{n\rightarrow \infty}\int_{\mathbb{D}}|f'(w)|^2|dw|^2.
        \end{align*}
        By Brezis-Lieb lemma (\cite{brezis_lieb}), since $f_n'\conv{n\rightarrow \infty}f'$ almost everywhere and $\np{f_n'}{2}{\mathbb{D}}\conv{n\rightarrow\infty}\np{f'}{2}{\mathbb{D}}$, we deduce that 
        \begin{align}\label{strong_convergence}
            f_n'\conv{n\rightarrow \infty} f'\quad \text{strongly in}\;\, L^2(\mathbb{D}).
        \end{align}
        Therefore, we also get the convergence
	 	\begin{align*}
	 		\begin{alignedat}{2}
	 			\frac{f_n'}{f_n}\frac{|f_n|^2}{1+|f_n|^2}&\conv{n\rightarrow \infty} \frac{f'}{f}\frac{|f|^2}{1+|f|^2}\quad&& \text{in}\;\, L^2(\mathbb{D})
	 		\end{alignedat}
	 	\end{align*}
	 	which finally shows by \eqref{first_convergence} that
	 	\begin{align}\label{s32}
	 		\int_{\mathbb{D}}\left|\frac{f_n''(z)}{f_n'(z)}-2\frac{f_n'(z)}{f_n(z)}\frac{|f_n(z)|^2}{1+|f_n(z)|^2}\right|^2|dz|^2&\conv{n\rightarrow \infty}\int_{\mathbb{D}}\left|\frac{f''(z)}{f'(z)}-2\frac{f'(z)}{f(z)}\frac{|f(z)|^2}{1+|f(z)|^2}\right|^2|dz|^2\nonumber\\
	 		\int_{\mathbb{D}}\log|z|\frac{4|f_n'(z)|^2|dz|^2}{(1+|f_n(z)|^2)^2}&\conv{n\rightarrow \infty}\int_{\mathbb{D}}\log|z|\frac{4|f'(z)|^2|dz|^2}{(1+|f(z)|^2)^2}.
	 	\end{align}
	 	Finally, we also have $f_n(0)=f(0)$ and
	 	\begin{align}\label{s33}
	 		4\pi\log|f_n'(0)|=4\pi\log|f'(0)|+4\pi\log(1-\epsilon_n)\conv{n\rightarrow \infty}4\pi\log|f'(0)|.
	 	\end{align}
	 	Therefore, if $\Omega_n=f_n(\mathbb{D})$, and $g_n:\mathbb{C}\setminus\bar{\mathbb{D}}\rightarrow \C\setminus\bar{\Omega_n}$ is any univalent map such that $g_n(\infty)=\infty$, since $\gamma_n\conv{n\rightarrow \infty}\gamma$ uniformly, we can assume without loss of generality that $g_n'(\infty)\conv{n\rightarrow \infty}g'(\infty)$. Furthermore, by Corollary A.$4$ of \cite{takteo}, we also get
	 	\begin{align}\label{second_conv}
	 		&\lim\limits_{n\rightarrow \infty}\int_{\C\setminus\bar{\mathbb{D}}}\left|\frac{g_n''(z)}{g_n'(z)}-\frac{g''(z)}{g(z)}\right|^2=0\\
	 		&\lim\limits_{n\rightarrow \infty}\int_{\C\setminus\bar{\mathbb{D}}}\left|\left(\frac{g_n''(z)}{g_n'(z)}-2\frac{g_n'(z)}{g_n(z)}+\frac{2}{z}\right)-\left(\frac{g''(z)}{g'(z)}-2\frac{g'(z)}{g(z)}+\frac{2}{z}\right)\right|^2|dz|^2=0.
	 	\end{align}
	 	As previously, we have
	 	\begin{align}\label{third_conv}
	 		\frac{g_n'}{g_n}\frac{1}{1+|g_n|^2}&\conv{n\rightarrow \infty}\frac{g'}{g}\frac{1}{1+|g|^2}\quad \text{in}\;\, L^2(\C\setminus \bar{\mathbb{D}})\nonumber\\
            {g_n'}\frac{1}{1+|g_n|^2}&\conv{n\rightarrow \infty}{g'}\frac{1}{1+|g|^2}\quad \text{in}\;\, L^2(\C\setminus \bar{\mathbb{D}})
	 	\end{align}
	 	Therefore, \eqref{second_conv} and \eqref{third_conv} imply that 
	 	\begin{align}\label{s34}
	 		\int_{\C\setminus\bar{\mathbb{\mathbb{D}}}}\left|\frac{g_n''(z)}{g_n'(z)}-2\frac{g_n'(z)}{g_n(z)}\frac{|g_n(z)|^2}{1+|g_n(z)|^2}+\frac{2}{z}\right|^2|dz|^2&\conv{n\rightarrow \infty}\int_{\C\setminus\bar{\mathbb{\mathbb{D}}}}\left|\frac{g''(z)}{g'(z)}-2\frac{g'(z)}{g(z)}\frac{|g(z)|^2}{1+|g(z)|^2}+\frac{2}{z}\right|^2|dz|^2\nonumber\\
	 		\int_{\C\setminus\bar{\mathbb{\mathbb{D}}}}\log|z|\frac{4|g_n'(z)|^2|dz|^2}{(1+|g_n(z)|^2)^2}&\conv{n\rightarrow \infty}\int_{\C\setminus\bar{\mathbb{\mathbb{D}}}}\log|z|\frac{4|g'(z)|^2|dz|^2}{(1+|g(z)|^2)^2}.
	 	\end{align}
	 	Finally, we deduce by \eqref{s31}, \eqref{s32}, \eqref{s33} and \eqref{s34} that 
	 	\begin{align*}
	 		S_3(\gamma_n)\conv{n\rightarrow\infty}S_3(\gamma)
	 	\end{align*}
	 	which concludes the proof of the theorem by \eqref{convergence}, Theorem \ref{wp10} and Theorem \ref{third_loewner}.
	 \end{proof}
  	 \begin{rems}\label{remTheoA}	 

	 Notice that we can also directly express the Loewner energy using moving frames. First, we trivially have
	 \begin{align*}
	 I^L(\Gamma)&=\frac{1}{\pi}\left\{\sum_{i=1}^2\int_{\Omega_i}|\omega_i-\ast\,dG_{\Omega_i}|_{g_0}^2+2\int_{\Omega_i}G_{\Omega_i}K_{g_0}d\mathrm{vol}_{g_0}+\mathrm{Area}(\Omega_i)\right\}\\
	 & +4\log|\D f_1(0)|+4\log|\D f_2(0)|-12\,\log(2).
	 \end{align*}
	 Alternatively, we have
	 \begin{align*}
	 I^L(\Gamma)&=\frac{1}{2 { \pi}}\int_{\Omega_1}\left(|d\e_1|^2_{g_0}+|d\f_1|^2_{g_0}-2|dG_{\Omega_1}|^2_{g_0}\right)d\mathrm{vol}_{g_0}+\frac{1}{2 {\pi}}\int_{\Omega_2}\left(|d\e_2|^2_{g_0}+|d\f_2|^2_{g_0}-2|dG_{\Omega_2}|^2_{g_0}\right)d\mathrm{vol}_{g_0}\nonumber\\
	 	 		&+4\log|\D f_1(0)|+4\log|\D f_2(0)|-12\log(2),
	 \end{align*}
  which is (up to the second line involving the conformal maps $f_1$ and $f_2$) very reminiscent of the Ginzburg-Landau renormalised energy (\cite[Chapter VIII]{BBH}).
  
	 To see this equality, since $\e_1$, $\f_1$ and $\n$ are unitary, we have
	 \begin{align*}
	 |d\e_1|^2_{g_0}&=|\s{d\e_1}{\f_1}|_{g_0}^2+|\s{d\e_1}{\n}|_{g_0}^2=|\omega_1|^2_{g_0}+|\s{d\n}{\e_1}|_{g_0}^2\\
	 |d\f_1|^2_{g_0}&=|\omega_1|^2_{g_0}+|\s{d\n}{\f_1}|_{g_0}^2\\
	 |d\e_1|^2_{g_0}+|d\f_1|_{g_0}^2&=2|\omega_1|^{2}_{g_0}+|d\n|_{g_0}^2=2|\omega_1|_{g_0}^2+2.
	 \end{align*}
	 Then, integrating by parts and using that $G_{\Omega_1}=0$ on $\partial\Omega_1$, we deduce by Stokes theorem—and the equation (that follows from \eqref{liouville1})
	 \begin{align*}
	 d\left(\omega_1-\ast\,dG_{\Omega_1}\right)=-K_{g_0}d\mathrm{Area}_{g_0},
	 \end{align*}
	 where $K_{g_0}=1$ is the Gauss curvature of the sphere—that 
	 \begin{align*}
	 &\frac{1}{2}\int_{\Omega_1}\left(|d\e_1|^2_{g_0}+|d\f_1|_{g_0}^2-2|dG_{\Omega_1}|_{g_0}^2\right)d\mathrm{vol}_{g_0}=\int_{\Omega_1}\left(|\omega_1|_{g_0}^2-|dG_{\Omega_1}|^2_{g_0}\right)d\mathrm{vol}_{g_0}+\mathrm{Area}_{g_0}(\Omega_1)\\
	 &=\int_{\Omega_1}\s{\omega_1-\ast\, dG_{\Omega_1}}{\omega_1+\ast\, dG_{\Omega_1}}_{g_0}d\mathrm{vol}_{g_0}+\mathrm{Area}_{g_0}(\Omega_1)\\
	 &=\int_{\Omega_1}|\omega_1-\ast\, dG_{\Omega_1}|_{g_0}^2d\mathrm{vol}_{g_0}+2\int_{\Omega_1}(\omega_1-\ast\, dG_{\Omega_1})\wedge dG_{\Omega_1}+\mathrm{Area}_{g_0}(\Omega_1)\\
	 &=\int_{\Omega_1}|\omega_1-\ast\,dG_{\Omega_1}|_{g_0}^2 {d\mathrm{vol}_{g_0}}-2\int_{\Omega_1}G_{\Omega_1}d(\omega_1-\ast\,dG_{\Omega_1})+\mathrm{Area}_{g_0}(\Omega_1)\\
	 &=\int_{\Omega_1}|d\mu_1|_{g_0}^2d\mathrm{vol}_{g_0}+2\int_{\Omega_1}G_{\Omega_1}K_{g_0}d\mathrm{vol}_{g_0}+\mathrm{Area}_{g_0}(\Omega_1)
	 \end{align*}
	 which implies since $\mathrm{Area}_{g_0}(S^2)=4\pi$ that
	 \begin{align*}
	 &\frac{1}{2}\int_{\Omega_1}\left(|d\e_1|^2_{g_0}+|d\f_1|^2_{g_0}-2|dG_{\Omega_1}|^2_{g_0}\right)d\mathrm{vol}_{g_0}+\frac{1}{2}\int_{\Omega_1}\left(|d\e_2|^2_{g_0}+|d\f_2|^2_{g_0}-2|dG_{\Omega_2}|^2_{g_0}\right)d\mathrm{vol}_{g_0}\\
	 &=\int_{\Omega_1}|d\mu_1|_{g_0}^2d\mathrm{vol}_{g_0}+\int_{\Omega_2}|d\mu_2|_{g_0}^2+2\int_{\Omega_1}G_{\Omega_1}K_{g_0}d\mathrm{vol}_{g_0}+2\int_{\Omega_2}G_{\Omega_2}K_{g_0}d\mathrm{vol}_{g_0}+4\pi.
	 \end{align*}
	 Notice that it gives another explanation for the factor $4\pi$ in the definition of $\mathscr{E}$.
	 \end{rems}

	 \section{Appendix}

In this appendix, we provide more details on the geodesic curvature for Weil-Petersson quasicircles and show a consequence of Theorem~\ref{s3} which is an identity on univalent functions associated to a Weil-Petersson quasicircle.
	
\subsection{Properties of the Geodesic Curvature for Weil-Petersson Quasicircles}\label{geodesicprop}

      	 \begin{lemme}\label{poinca}
	 Let $\mathbb{H}=\C\cap\ens{z:\Im(z)>0}$ be the Poincaré half-plane, and $f:\mathbb{H}\rightarrow \C$ a univalent holomorphic map, $\Omega=f(\mathbb{H})$, and assume that $\gamma=\partial \Omega$ is a simple curve of finite Loewner energy. Then the geodesic curvature $k_{g_0}$ of $\gamma$ is given in the distributional sense by
	 \begin{align}\label{geodesic_flat}
	 k_{g_0}=\Im\left(\frac{f''(z)}{f'(z)}\right) \qquad \text{for all}\;\, z\in\partial_{\infty}\mathbb{H}=\R.
	 \end{align}
	 \vspace{-1em}
	 \end{lemme}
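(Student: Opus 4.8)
The plan is to reduce \eqref{geodesic_flat} to the classical formula for the geodesic curvature of a smoothly parametrised curve and then promote it to the distributional setting using the boundary regularity provided by the finite-energy hypothesis; it is the flat-domain counterpart of the spherical computation \eqref{kg0}.

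As a warm-up, suppose $\gamma$ is $C^2$ and parametrised by $t\mapsto f(t)$ with $t\in\partial_\infty\mathbb{H}=\R$. Then, $f$ being conformal up to the boundary and $\R$ having constant tangent direction $1$, the unit tangent of $\gamma$ at $f(t)$ is $\tau(f(t))=f'(t)/|f'(t)|=e^{i\arg f'(t)}$ and the arc-length element is $d\mathscr{H}^1=|f'(t)|\,dt$. Using the definition \eqref{eq:def_geodesic_curvature} of the geodesic curvature through the frame $(\vec e,\vec f)=(\tau,-i\tau)$ (see \cite{changeodesic}, or directly the Frenet equations), one gets $k_{g_0}\,d\mathscr{H}^1=d(\arg f')=\Im d(\log f')=\Im(f''/f')\,dt$, where $\log f'$ is the holomorphic branch on the simply connected domain $\mathbb{H}$ on which $f'$ does not vanish, so that $(\log f')'=f''/f'$ and $\Im\log f'=\arg f'$. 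This is \eqref{geodesic_flat}, so the content of the lemma is to make sense of both sides when $\gamma$ is only a Weil-Petersson quasicircle and to verify the identity there.

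To that end I would first record the trace regularity. Since $\gamma$ has finite Loewner energy it is a Weil-Petersson quasicircle, so by condition $(2)$ of Theorem~\ref{thm:equi_WP}, combined with the conformal invariance of $\int|f''/f'|^2$ under a Cayley transform $\mathbb{D}\to\mathbb{H}$ (whose derivative is smooth and non-vanishing on compact subsets of $\overline{\mathbb{H}}$), the holomorphic function $h:=\log f'$ lies in $H^1_{\mathrm{loc}}(\overline{\mathbb{H}})$ and hence admits a trace $h|_{\R}\in H^{1/2}_{\mathrm{loc}}(\R)$. Thus $f''/f'=h'$ has a distributional boundary value on $\R$, namely the $H^{-1/2}_{\mathrm{loc}}(\R)$ distribution obtained by differentiating $h|_{\R}$, and this is the meaning of the right-hand side of \eqref{geodesic_flat}. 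On the other side, by condition $(5)$ of Theorem~\ref{thm:equi_WP} the curve $\gamma$ is chord-arc with unit tangent $\tau\in H^{1/2}(\gamma)$, so $f$ extends to a homeomorphism $\overline{\mathbb{H}}\to\overline{\Omega}$ (Carathéodory) and the intrinsic distributional geodesic curvature is $k_{g_0}\,d\mathscr{H}^1:=d\vartheta$ for a turning-angle lift $\vartheta$ of $\tau$. The identity I then need is that, pulled back by $f$, this turning angle equals $\arg f'|_{\R}=\Im h|_{\R}$, equivalently $\tau\circ f=f'/|f'|$ a.e.\ on $\R$ — the non-tangential boundary values of $f'$ recover the a.e.-defined tangent direction of $\gamma$. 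Granting this, $f^{*}\big(k_{g_0}\,d\mathscr{H}^1\big)=d\vartheta=d\big(\Im h|_{\R}\big)=\Im\big((\log f')'|_{\R}\big)\,dt=\Im\big(f''/f'|_{\R}\big)\,dt$, which is \eqref{geodesic_flat}.

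The main obstacle is precisely this last identification $\tau\circ f=f'/|f'|$ a.e.\ and the compatibility of the $H^{1/2}$ regularities across $f$ — that composition with the merely Hölder (not bi-Lipschitz) boundary homeomorphism $f$ sends the $H^{1/2}$ turning angle of $\gamma$ to the $H^{1/2}$ function $\arg f'|_{\R}$. I would handle it through the classical boundary behaviour of $\log f'$ for chord-arc (a fortiori Weil-Petersson) domains: $\log f'$ has vanishing mean oscillation, finite non-zero non-tangential boundary values a.e., and $\gamma$ is rectifiable with $d\mathscr{H}^1=|f'|\,dt$ and tangent angle $\arg f'$. An alternative that sidesteps the delicate composition of $H^{1/2}$ functions is to argue by approximation: for $f_\varepsilon:=f(\,\cdot\,+i\varepsilon)$ the curve $\gamma_\varepsilon=f_\varepsilon(\R)$ is smooth and the warm-up gives $f_\varepsilon^{*}\big(k_{g_0}\,d\mathscr{H}^1\big)=\Im\big((f''/f')(\,\cdot\,+i\varepsilon)\big)\,dt$; since $\log f'(\,\cdot\,+i\varepsilon)\to h|_{\R}$ in $H^{1/2}_{\mathrm{loc}}(\R)$ as $\varepsilon\to0^{+}$ (convergence of interior slices to the trace of an $H^1_{\mathrm{loc}}$ function), both sides converge in $H^{-1/2}_{\mathrm{loc}}(\R)$ to the asserted identity, the turning angles of $\gamma_\varepsilon$ read in the variable $t$ being exactly the $\Im\log f'(\,\cdot\,+i\varepsilon)$.
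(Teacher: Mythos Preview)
Your core computation matches the paper's: both differentiate the unit tangent $\tau=f'/|f'|$ along the boundary parameter and extract $\Im(f''/f')$. The paper does this by an explicit $\partial_z,\partial_{\bar z}$ calculation of $\partial_x\vec e$ and then reads off $k_{g_0}=\langle\partial_x\vec e,\vec f\rangle$; your $d(\arg f')$ argument is the same thing said more cleanly. You also add the trace-theoretic and approximation discussion for the distributional reading, which the paper's proof simply omits (it just performs the formal computation and defers the $H^{-1/2}$ statement to Lemma~\ref{new_geodesic}).

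There is, however, a genuine slip in your warm-up that propagates into the distributional part. You write $k_{g_0}\,d\mathscr{H}^1=d(\arg f')=\Im(f''/f')\,dt$ and declare this to be \eqref{geodesic_flat}. But you have also recorded $d\mathscr{H}^1=|f'(t)|\,dt$, so your identity gives $k_{g_0}=\Im(f''/f')/|f'|$, not $\Im(f''/f')$. The point is that the quantity the paper calls $k_{g_0}$ in this lemma (and in \eqref{eq:def_geodesic_curvature}, \eqref{kg0}) is \emph{not} the intrinsic geodesic curvature with respect to arc-length; it is $\langle\vec e,\partial_t\vec f\rangle$ with $\partial_t$ the derivative in the boundary \emph{parameter} $t\in\R$ (respectively $\theta\in S^1$). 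Equivalently, \eqref{geodesic_flat} is the statement that the pullback of the curvature $1$-form to $\R$ equals $\Im(f''/f')\,dt$, so the symbol $k_{g_0}$ there denotes the $dt$-density, i.e.\ (intrinsic curvature)$\times|f'|$. Once you replace $d\mathscr{H}^1$ by $dt$ in your chain of equalities --- or, equivalently, state that the lemma's $k_{g_0}$ is by definition $\langle\vec e,\partial_t\vec f\rangle=d(\arg f')/dt$ --- your argument is correct and essentially identical in spirit to the paper's, with the added benefit of an honest justification of the distributional limit.
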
\label{geodesic_curvature_poinca}
	 \begin{proof}
	 The geodesic curvature is given by
	    \begin{align*}
	    k_{g_0}=\s{\p{x}\e}{\f},
	    \end{align*}
	    if $(\e,\f)$ is the Cartesian frame given by (in the following formulae, $f$ is seen as a $\R^2$-valued function)
	    \begin{align*}
	    \left\{\begin{alignedat}{1}
	    \e&=\frac{\p{x}f}{|\p{x}f|}=\left(\Re\left(\frac{f'(z)}{|f'(z)|}\right),\Im\left(\frac{f'(z)}{|f'(z)|}\right)\right)=\frac{f'(z)}{|f'(z)|}\\
	    \f&=\frac{\p{y}f}{|\p{y}f|}=\left(-\Im\left(\frac{f'(z)}{|f'(z)|}\right),\Re\left(\frac{f'(z)}{|f'(z)|}\right)\right)=i\,\frac{f'(z)}{|f'(z)|}.
	    \end{alignedat}\right.
	    \end{align*}
	    Define $\e_z=\dfrac{f'(z)}{|f'(z)|}$. Then we have
        \begin{align*}
	    \p{z}\e_z&=\p{z}\left(\frac{f'(z)}{|f'(z)|}\right)=\frac{f''(z)}{|f'(z)|}-\frac{1}{2}\frac{f''(z)}{|f'(z)|}=\frac{1}{2}\frac{f''(z)}{|f'(z)|}=\frac{1}{2}\frac{f''(z)}{f'(z)}\e_z\\
	    \p{\z}\e_z&=\p{\z}\left(\frac{f'(z)}{|f'(z)|}\right)=-\frac{1}{2}\frac{f'(z)^2\bar{f''(z)}}{|f'(z)|^3}=-\frac{1}{2}\bar{\left(\frac{f''(z)}{f'(z)}\right)}\e_z.
	    \end{align*}	    
	    Therefore, we deduce that
	    \begin{align*}
	    \p{z}\Re\left(\frac{f'(z)}{|f'(z)|}\right)&=\frac{1}{2}\left(\p{z}\left(\frac{f'(z)}{|f'(z)|}\right)+\bar{\p{\z}\left(\frac{{f'(z)}}{|f'(z)|}\right)}\right)=\frac{1}{4}\frac{f''(z)}{f'(z)}\left(\e_z-\e_{\z}\right)=\frac{i}{2}\frac{f''(z)}{f'(z)}\Im\left(\frac{f'(z)}{|f'(z)|}\right)\\
	    \p{z}\Im\left(\frac{f'(z)}{|f'(z)|}\right)&=-\frac{i}{2}\left(\frac{1}{2}\frac{f''(z)}{|f'(z)|}+\frac{1}{2} \bar{\frac{f''(z)}{|f'(z)|}}\right)=-\frac{i}{4}\frac{f''(z)}{f'(z)}\left(\e_z+\e_{\z}\right)=-\frac{i}{2}\frac{f''(z)}{f'(z)}\Re\left(\frac{f'(z)}{|f'(z)|}\right)
	    \end{align*}
	    where we used
	    
	    Therefore, we have
	    \begin{align*}
	    \p{x}\Re\left(\frac{f'(z)}{|f'(z)|}\right)&=2\,\Re\left(\p{z}\Re\left(\frac{f'(z)}{|f'(z)|}\right)\right)=-\Im\left(\frac{f''(z)}{f'(z)}\right)\Im\left(\frac{f'(z)}{|f'(z)|}\right)\\
	    \p{x}\Im\left(\frac{f'(z)}{|f'(z)|}\right)&=\Im\left(\frac{f''(z)}{f'(z)}\right)\Re\left(\frac{f'(z)}{f'(z)}\right),
	    \end{align*}
	    so that
	    $
	    \p{x}\e=\Im\left(\dfrac{f''(z)}{f'(z)}\right)\f,
	    $
	    and
	    \begin{align*}
	    k_{g_0}=\s{\p{x}\e}{\f}=\Im\left(\frac{f''(z)}{f'(z)}\right),
	    \end{align*}	  
	    which concludes the proof of the lemma.
	    \end{proof}
     
 	 \begin{lemme}\label{new_geodesic}
 	  	 	Let $\gamma\subset\C$ be a Weil-Petersson quasicircle. Then the geodesic curvature $k_{g_0}:S^1\rightarrow \R$ is a tempered distribution of order at most $2$. More precisely, we have $k_{g_0}\in H^{-1/2}(S^1)$. 
 	  	 	\end{lemme}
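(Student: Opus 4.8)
The plan is to reduce the statement to a boundary‑trace (equivalently, Fourier‑coefficient) computation for the logarithmic derivative of a Riemann map. Fix a conformal map $f:\mathbb{D}\to\Omega$ onto one of the two Jordan domains bounded by $\gamma$ (the bounded one, say); since $\gamma$ is a Weil–Petersson quasicircle it is a Jordan curve, so $f$ extends continuously to $\overline{\mathbb{D}}$ by Carathéodory's theorem, and $f'$ is zero‑free on $\mathbb{D}$ because $f$ is univalent. First I would record the distributional identity
\[
k_{g_0}=1+\Re\!\left(z\,\frac{f''(z)}{f'(z)}\right)\qquad\text{on }S^1,
\]
which is the disk analogue of Lemma~\ref{poinca} and of formula~\eqref{kg0}: parametrising $\gamma$ by $\theta\mapsto f(e^{i\theta})$, the unit tangent is $\tau(\theta)=ie^{i\theta}f'(e^{i\theta})/|f'(e^{i\theta})|$, so the geodesic‑curvature density is the rate of rotation $k_{g_0}=\Im\,\partial_\theta\log\!\bigl(ie^{i\theta}f'(e^{i\theta})\bigr)=1+\Im\,\partial_\theta\log f'(e^{i\theta})=1+\Re\!\bigl(e^{i\theta}f''(e^{i\theta})/f'(e^{i\theta})\bigr)$, where the $\theta$‑derivative is taken in $\mathscr{D}'(S^1)$ and $\log f'$ is a well‑defined holomorphic function on $\mathbb{D}$ since $f'$ does not vanish.

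Next I would feed in the Weil–Petersson hypothesis through condition~(2) of Theorem~\ref{thm:equi_WP}: $\log|f'|\in W^{1,2}(\mathbb{D})$, equivalently $h:=f''/f'=(\log f')'$ lies in the Bergman space $A^2(\mathbb{D})$. Writing $h(z)=\sum_{n\ge 0}a_nz^n$, this is exactly $\sum_{n\ge 0}|a_n|^2/(n+1)<\infty$ (since $\|h\|_{A^2}^2=\pi\sum_{n\ge0}|a_n|^2/(n+1)$). In particular the coefficients have polynomial growth, so $h$ has a distributional boundary value $\sum_{n\ge0}a_ne^{in\theta}$ on $S^1$, and the distribution $\Re\!\bigl(e^{i\theta}h(e^{i\theta})\bigr)=\tfrac12\sum_{m\ge1}a_{m-1}e^{im\theta}+\tfrac12\sum_{m\ge1}\overline{a_{m-1}}\,e^{-im\theta}$ has Fourier coefficients $c_m$ with $\sum_{m\in\mathbb{Z}}(1+|m|)^{-1}|c_m|^2\le\tfrac12\sum_{k\ge0}(1+k)^{-1}|a_k|^2<\infty$, i.e. it belongs to $H^{-1/2}(S^1)$. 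Adding the constant $1\in C^\infty(S^1)\subset H^{-1/2}(S^1)$ gives $k_{g_0}\in H^{-1/2}(S^1)$. (Equivalently, avoiding series: $\nabla\log|f'|\in L^2(\mathbb{D})$ forces the harmonic conjugate $\arg f'$ also to have $L^2$ gradient, hence its trace lies in $H^{1/2}(S^1)$ modulo constants; then $k_{g_0}=1+\partial_\theta\bigl(\arg f'(e^{i\theta})\bigr)$ with $\partial_\theta:H^{1/2}(S^1)\to H^{-1/2}(S^1)$ bounded.)

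Finally, the coarser assertions follow at once: $S^1$ is a compact manifold, so every element of $\mathscr{D}'(S^1)$ is a tempered distribution; and since $C^2(S^1)\subset H^2(S^1)\subset H^{1/2}(S^1)$ with dense continuous inclusions, dualising yields $H^{-1/2}(S^1)\subset H^{-2}(S^1)\subset\bigl(C^2(S^1)\bigr)^{\ast}$, so $k_{g_0}$ has order at most $2$. I expect the only genuinely delicate point to be the first step — making the formula for $k_{g_0}$ and, in particular, the symbols $\arg f'(e^{i\theta})$ and $\partial_\theta\arg f'$ rigorous as Sobolev/distributional objects, because a Weil–Petersson curve need not be $C^1$ and $f'$ need not extend continuously (let alone stay bounded away from $0$ and $\infty$) on $S^1$; every pointwise manipulation above has to be read as an identity between traces/distributions. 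Once that interpretation is pinned down, the membership $k_{g_0}\in H^{-1/2}(S^1)$ is the short coefficient count just described, and the statement about order and temperedness is immediate.
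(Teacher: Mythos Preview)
Your proposal is correct and follows essentially the same route as the paper: both start from the identity $k_{g_0}=1+\Re\!\bigl(z\,f''(z)/f'(z)\bigr)$ on $S^1$, use the Weil--Petersson hypothesis $\log f'\in W^{1,2}(\mathbb{D})$ (equivalently $f''/f'\in L^2(\mathbb{D})$), and conclude via the trace/Fourier characterisation that $\partial_\theta\log f'\in H^{-1/2}(S^1)$. The only cosmetic difference is that the paper inserts the smooth approximations $f_\epsilon(z)=f((1-\epsilon)z)/(1-\epsilon)$ to justify the boundary trace before invoking the Fourier norm, whereas you compute directly with the Bergman/Taylor coefficients of $f''/f'$; the underlying mechanism is identical.
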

        \begin{proof}
  	 	Either using the Poincaré half-plane $\mathbb{H}$ and the formula \eqref{geodesic_flat} or \eqref{kg0}, we 
  	 	get
  	 	\begin{align}\label{partial}
  	 	k_{g_0}=\Re\left(z\frac{f''(z)}{f'(z)}\right)+1.
  	 	\end{align}
  	 	Now, if $0<\epsilon<1$ and $f_{\epsilon}:\mathbb{D}\rightarrow \Omega$ is defined by 
  	 	\begin{align*}
  	 	f_{\epsilon}(z)=\frac{1}{1-\epsilon}f((1-\epsilon)z)\qquad z\in\mathbb{D},
  	 	\end{align*}
  	 	we have (see \cite{yilinvention}, Lemma $8.2$)
  	 	\begin{align*}
  	 	\lim\limits_{\epsilon\rightarrow 0}\int_{\mathbb{D}}\left|\frac{f_{\epsilon}''(z)}{f_{\epsilon}'(z)}-\frac{f''(z)}{f'(z)}\right|^2|dz|^2=0,
  	 	\end{align*}
  	 	which is equivalent by trace theory to
  	 	\begin{align*}
  	 	\lim\limits_{\epsilon\rightarrow 0}\hs{\log|f'_{\epsilon}|-\log|f'|}{{1}/{2}}{S^1}=0,\\
  	 	\lim\limits_{\epsilon\rightarrow 0}\hs{\arg(f'_{\epsilon})-\arg(f')}{{1}/{2}}{S^1}=0.
  	 	\end{align*}
  	 	and using the equivalent norm for $H^s$ spaces given by
  	 	\begin{align*}
  	 	\hs{u}{s}{S^1}=\sum_{n\in\Z}^{}|n|^{2s}|a_n|^2,\qquad \text{if}\;\, u(z)=\sum_{n\in\Z}a_nz^n.
  	 	\end{align*}
  	 	we deduce 
  	 	that
  	 	 \begin{align*}
  	 	  	 	\lim\limits_{\epsilon\rightarrow 0}\hs{\partial_{\theta}\log(f_{\epsilon}')-\partial_{\theta}\log(f')}{-{1}/{2}}{S^1}=0.
  	 	\end{align*}
  	 	Since
  	 	\begin{align*}
  	 	\frac{f''(z)}{f'(z)}=\frac{1}{ie^{i\theta}}\partial_{\theta}\log(f'(z)),
  	 	\end{align*}
  	 	we deduce that $\dfrac{f''}{f'}\in H^{-1/2}(S^1)$ and that concludes the proof of the lemma by \eqref{partial}.
  	 	\end{proof}
 	  	 	
 	  	\begin{rem}
 	  	For other considerations related to trace spaces, see the Definition $5$ of \cite{bishop2bis} and \cite{bishop_ft}.
 	  	\end{rem} 	
 	 	 \setcounter{footnote}{0}
 	 	\begin{prop}
 	 	For all $\epsilon>0$, let $\mathbb{D}_+(0,\epsilon)=\mathbb{H}\cap\ens{z:|z|<\epsilon}$, where $\mathbb{H}=\C\cap\ens{z:\Im(z)>0}$ is the Poincaré half-plane. For $\epsilon>0$ small enough, the map $f:\mathbb{D}_+(0,\epsilon)\rightarrow \mathbb{C}$ defined by
 	 	\begin{align*}
 	 	f(z)=z\,e^{i\log\log(z)},
 	 	\end{align*}
 	 	where $\log(z)$ is the principal value of the logarithm on $\mathbb{H}$, is an immersion and $\log|f'|\in W^{1,2}(\mathbb{D}_+(0,\epsilon))$. In particular, the curve $\gamma:(-\epsilon,\epsilon)\rightarrow \C$ such that $\gamma(t)=te^{i\log\log(t)}$ for all $t\in (-\epsilon,\epsilon)$ is a part of a Weil-Petersson quasicircle.\footnote{Beware that the $\log$ function here is defined as the trace of our continuous determination of the logarithm on the upper-half plane and is not the standard $\log$ function on $(0,\infty)$.} Furthermore, its geodesic curvature $k_{g_0}$ is given by
 	 	\begin{align*}
 	 	k_{g_0}=-\frac{1}{t(1+\log^2(t))}+\mathrm{p.v.}\int_{-\epsilon}^{\epsilon}\frac{dt}{t\log(t)}.
 	 	\end{align*}
 	 	\end{prop}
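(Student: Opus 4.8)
The plan is to verify the three claimed properties of $f(z)=z\,e^{i\log\log(z)}$ directly by computation, reducing everything to the single function $\log|f'|$ on the boundary. First I would compute $f'(z)$ explicitly: writing $f(z)=z\exp(i\log\log z)$, the logarithmic derivative is
\begin{align*}
\frac{f'(z)}{f(z)}=\frac{1}{z}+\frac{i}{z\log z},\qquad\text{so}\qquad f'(z)=e^{i\log\log z}\left(1+\frac{i}{\log z}\right).
\end{align*}
From this, $\log|f'(z)|=\Re\bigl(i\log\log z\bigr)+\tfrac12\log\bigl|1+\tfrac{i}{\log z}\bigr|^2=-\arg(\log z)+\tfrac12\log\bigl(1+\tfrac{1}{|\log z|^2}\cdots\bigr)$ — more precisely $\log|f'|=-\Im(\log\log z)+\tfrac12\log|1+i/\log z|^2$. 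For $\epsilon$ small, $|\log z|$ is large on $\mathbb{D}_+(0,\epsilon)$, so $1+i/\log z$ stays near $1$ and in particular is nonvanishing; this shows $f'\neq 0$, i.e. $f$ is an immersion. It also shows the second summand of $\log|f'|$ is smooth and bounded with bounded derivatives near $0$, hence contributes a finite amount to the $W^{1,2}$ norm, so the only delicate term is $-\Im(\log\log z)$.

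Next I would estimate $\nabla\bigl(\Im\log\log z\bigr)$. Since $\log\log z$ is holomorphic on $\mathbb{D}_+(0,\epsilon)$ (for $\epsilon$ small, $\log z$ avoids $(-\infty,0]$), its real and imaginary parts are harmonic conjugates and $|\nabla\Im\log\log z|=|\tfrac{d}{dz}\log\log z|=\tfrac{1}{|z||\log z|}$. Therefore
\begin{align*}
\int_{\mathbb{D}_+(0,\epsilon)}|\nabla\log|f'||^2\,|dz|^2\;\lesssim\;\int_{\mathbb{D}_+(0,\epsilon)}\frac{|dz|^2}{|z|^2|\log z|^2}+C,
\end{align*}
and passing to polar coordinates $z=re^{i\theta}$ with $\theta\in(0,\pi)$, $r\in(0,\epsilon)$, one has $|\log z|^2=\log^2 r+\theta^2\geq\log^2 r$, so the integral is bounded by $\pi\int_0^\epsilon \tfrac{dr}{r\log^2 r}=\pi\bigl[-1/\log r\bigr]_0^\epsilon<\infty$. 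This gives $\log|f'|\in W^{1,2}(\mathbb{D}_+(0,\epsilon))$, hence by characterisation (2) of Theorem~\ref{thm:equi_WP} (applied locally, after a standard cutoff/extension argument to glue this arc into a closed curve) the curve $\gamma(t)=t e^{i\log\log t}$ is a piece of a Weil-Petersson quasicircle. The subtlety here — and the step I expect to be the main obstacle — is the bookkeeping near $t=0$, namely making precise the statement "part of a Weil-Petersson quasicircle'': one must extend $f$ past the half-disk and verify the chord-arc/asymptotically smooth behavior at the spiral point, which requires a careful estimate of how $\arg\gamma'(t)=\log\log t\to\infty$ slowly enough that $\ell(x,y)/|x-y|\to 1$.

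Finally, for the geodesic curvature I would apply Lemma~\ref{poinca}: for a univalent $f:\mathbb{H}\to\C$, $k_{g_0}=\Im\bigl(f''(z)/f'(z)\bigr)$ on $\R=\partial_\infty\mathbb{H}$, in the distributional sense. Using $f'(z)=e^{i\log\log z}(1+i/\log z)$ and differentiating,
\begin{align*}
\frac{f''(z)}{f'(z)}=\frac{i}{z\log z}+\frac{-i/(z\log^2 z)}{1+i/\log z}=\frac{i}{z\log z}-\frac{i}{z\log z(\log z+i)}.
\end{align*}
Restricting to $z=t\in(-\epsilon,\epsilon)\subset\R$ and taking imaginary parts, the first term $\tfrac{i}{t\log t}$ contributes a principal-value distribution $\mathrm{p.v.}\,\tfrac{1}{t\log t}$ (since $\Im$ of a real quantity is the boundary distribution), and the second term, after simplifying $\tfrac{-i}{t\log t(\log t+i)}=\tfrac{-i(\log t-i)}{t\log t(\log^2 t+1)}=\tfrac{-i\log t - 1}{t\log t(\log^2 t+1)}$, has imaginary part $\tfrac{-\log t}{t\log t(\log^2 t+1)}=\tfrac{-1}{t(1+\log^2 t)}$. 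Summing, $k_{g_0}=-\tfrac{1}{t(1+\log^2 t)}+\mathrm{p.v.}\!\int_{-\epsilon}^{\epsilon}\tfrac{dt}{t\log t}$, as claimed. The only thing requiring care is the precise meaning of the principal value across $t=0$ and matching the convention for $\log$ on $\mathbb{H}$ flagged in the footnote; I would handle this by testing against a smooth test function and using oddness of $1/(t\log t)$ under $t\mapsto -t$ combined with the branch choice.
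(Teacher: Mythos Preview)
Your approach is essentially the same as the paper's: compute $f'(z)=(1+i/\log z)e^{i\log\log z}$, verify $f'\neq 0$ for $\epsilon$ small, establish the $W^{1,2}$ bound via $\int_0^\epsilon \frac{dr}{r\log^2 r}<\infty$, and then read off $k_{g_0}=\Im(f''/f')$ from Lemma~\ref{poinca}. The only cosmetic difference is that the paper bounds $|f''/f'|^2$ directly (obtaining $\lesssim |z|^{-2}\log^{-2}|z|+|z|^{-2}\log^{-4}|z|$), whereas you split $\log|f'|=-\Im(\log\log z)+\tfrac12\log|1+i/\log z|^2$ and bound the gradient of each piece separately; since $|\nabla\log|f'||=|f''/f'|$ for holomorphic $f'$, these are the same estimate in different clothing, and your version is arguably cleaner.

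Your caution about the phrase ``part of a Weil-Petersson quasicircle'' and the principal-value interpretation is well placed; the paper's own proof is purely formal on both points (it computes $k_{g_0}$ only for $t>0$ with real $\log t$, then records the answer with a $\mathrm{p.v.}$ symbol without further justification), so you are not missing any argument that the paper supplies. One small wording slip: your parenthetical ``$\Im$ of a real quantity is the boundary distribution'' is garbled---what you mean (and compute correctly) is that for real $t>0$ the term $i/(t\log t)$ is purely imaginary, so its imaginary part is the non-integrable function $1/(t\log t)$, which must be interpreted as a principal value near $t=0$.
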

 	 	\begin{proof}
 	 	We compute
 	 	\begin{align*}
 	 	f'(z)&=e^{i\log\log(z)}+\frac{i}{\log(z)}e^{i\log\log(z)}=\left(1+\frac{i}{\log(z)}\right)e^{i\log\log(z)}\\
 	 	f''(z)&=-\frac{i}{z\log^2(z)}e^{i\log\log(z)}+\frac{i}{z\log(z)}\left(1+\frac{i}{\log(z)}\right)e^{i\log\log(z)}.
 	 	\end{align*}
 	 	Therefore, we have 
 	 	\begin{align*}
 	 	\frac{f''(z)}{f'(z)}=-\frac{i}{z\log(z)(i+\log(z))}+\frac{i}{z\log(z)}=i\frac{-1+i+\log(z)}{z\log(z)(i+\log(z))}.
 	 	\end{align*}
 	 	Notice that we have
 	 	\begin{align*}
 	 	|i+\log(z)|^2=|\log|z|+i(1+\arg(z))|^2=\log^2|z|+(1+\arg(z))^2
 	 	\end{align*}
 	 	Therefore, we have
 	 	\begin{align*}
 	 	\left|\frac{f''(z)}{f'(z)}\right|^2=\frac{(\log|z|-1)^2+(1+\arg(z))^2}{|z|^2\log^2|z|(\log^2|z|+(1+\arg(z))^2}\leq \frac{1}{\log^2|z|}+\frac{-2\log|z|+1}{\log^4|z|}
    \leq\frac{2}{|z|^2\log^4|z|}+\frac{2}{|z|^2\log^2|z|},
 	 	\end{align*}
 	 	and 
 	 	\begin{align*}
 	 	\int_{\mathbb{D}(0,\frac{1}{2})}\left|\frac{f''(z)}{f'(z)}\right|^2|dz|^2\leq 4\pi\int_{0}^{\frac{1}{2}}\frac{dr}{r\log^4(r)}+4\pi\int_{0}^{\frac{1}{2}}\frac{dr}{r\log^2(r)}=\frac{4\pi}{\log(2)}+\frac{4\pi}{3\log^2(2)}<\infty,
 	 	\end{align*}
 	 	which shows that $\gamma$ is a Weil-Petersson quasicircle. Then, we have by Lemma \ref{geodesic_flat} for $z\in\R$
 	 	\begin{align*}
 	 	k_{g_0}=\Im\left(\frac{f''(z)}{f'(z)}\right)=\frac{1-\log|z|+\log^2|z|}{|z|\log|z|(1+\log^2|z|)}=-\frac{1}{|z|(1+\log^2|z|)}+\frac{1}{|z|\log|z|},
 	 	\end{align*}
 	 	which concludes the proof of the proposition.
 	 	\end{proof}
 	 	\begin{rem}
 	 	In particular, we see that there exists curves whose geodesic curvature is a distribution of order $1$. This curve is an example of spiral mentioned earlier in the introduction. 
 	 	\end{rem}

\subsection{A consequence of  Theorem \ref{s3}}
	 
	 The new identity of $\pi\,I^L=S_3$ from Theorem \ref{third_loewner} and Theorem \ref{s3} provides a new identity about holomorphic univalent maps of the plane. 
 
	 \begin{lemme}\label{grunsky}
		Let $\gamma\subset\C$ be a closed simple curve with finite Loewner energy. 
	 	We have
	 	\begin{align*}
	 		&4\,\Re\int_{\mathbb{D}}\left(\frac{f''(z)}{f'(z)}-2\frac{f'(z)}{f(z)}+\frac{2}{z}\right)\bar{\left(\frac{f'(z)}{f(z)}\frac{1}{1+|f(z)|^2}-\frac{2}{z}\right)}|dz|^2+4\int_{\mathbb{D}}\left|\frac{f'(z)}{f(z)}\frac{1}{1+|f(z)|^2}-\frac{1}{z}\right|^2|dz|^2\nonumber\\
	 		&+4\,\Re\int_{\C\setminus\bar{\mathbb{D}}}\left(\frac{g''(z)}{g'(z)}-2\frac{g'(z)}{g(z)}+\frac{2}{z}\right)\bar{\left(\frac{g'(z)}{g(z)}\frac{1}{1+|g(z)|^2}\right)}|dz|^2+4\int_{\C\setminus\bar{\mathbb{D}}}\left|\frac{g'(z)}{g(z)}\frac{1}{1+|g(z)|^2}\right|^2|dz|^2\nonumber\\
	 		&+2\int_{\mathbb{D}}\log|z|\frac{4|f'(z)|^2|dz|^2}{(1+|f(z)|^2)^2}-2\int_{\C\setminus\bar{\mathbb{D}}}\log|z|\frac{4|g'(z)|^2|dz|^2}{(1+|g(z)|^2)^2}+4\pi=0,
	 	\end{align*}
        where $f$ and $g$ are univalent maps as in Definition \ref{def:s3}.
	 \end{lemme}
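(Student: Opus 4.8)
The plan is to recognise the left-hand side as the difference $S_3(\gamma)-\pi I^L(\gamma)$, which vanishes by the chain of identities already proved: $\pi I^L(\gamma)=S_3(\gamma)$ by Theorems~\ref{third_loewner} and~\ref{s3}. Throughout I normalise so that $\gamma$ separates $0$ from $\infty$ and $f(0)=0$, $g(\infty)=\infty$, which is the setting of \eqref{s2}; this is also the normalisation for which the left-hand side converges, since if $f(0)\neq 0$ the $\tfrac1z$- and $\tfrac2z$-shifted integrands over $\mathbb{D}$ acquire a non-integrable $|z|^{-2}$ singularity at the origin.

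First I would record the elementary pointwise identities, which follow from $1-\tfrac{|w|^2}{1+|w|^2}=\tfrac{1}{1+|w|^2}$,
\begin{align*}
\frac{f''}{f'}-2\frac{f'}{f}\frac{|f|^2}{1+|f|^2}&=\Big(\frac{f''}{f'}-2\frac{f'}{f}+\frac{2}{z}\Big)+2\Big(\frac{f'}{f}\frac{1}{1+|f|^2}-\frac{1}{z}\Big),\\
\frac{g''}{g'}-2\frac{g'}{g}\frac{|g|^2}{1+|g|^2}+\frac{2}{z}&=\Big(\frac{g''}{g'}-2\frac{g'}{g}+\frac{2}{z}\Big)+2\,\frac{g'}{g}\frac{1}{1+|g|^2},
\end{align*}
and abbreviate $A_f=\frac{f''}{f'}-2\frac{f'}{f}+\frac{2}{z}$, $B_f=\frac{f'}{f}\frac{1}{1+|f|^2}-\frac{1}{z}$, $A_g=\frac{g''}{g'}-2\frac{g'}{g}+\frac{2}{z}$, $B_g=\frac{g'}{g}\frac{1}{1+|g|^2}$, so that the first (resp.\ second) integrand in Definition~\ref{def:s3} is $A_f+2B_f$ (resp.\ $A_g+2B_g$). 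The next step is to verify $A_f,B_f\in L^2(\mathbb{D})$ and $A_g,B_g\in L^2(\C\setminus\bar{\mathbb{D}})$: for $A_f$ and $A_g$ this is the Weil--Petersson condition of Theorem~\ref{thm:equi_WP}(2)--(3) once one observes that $f(0)=0$ cancels the simple pole of $\tfrac{f'}{f}$ at the origin (and the $\tfrac2z$ correction handles $g$ at $\infty$); for $B_f$ and $B_g$ it follows from $|f'|\in L^2(\mathbb{D})$, from $\dist(0,\gamma)>0$ so that $1/f$ and $1/g$ stay bounded, and from the local expansions of $f$ at $0$ and of $g$ at $\infty$, exactly as in the estimates \eqref{l2}, \eqref{n_loewner4}, \eqref{n_loewner5} from the proof of Theorem~\ref{third_loewner}. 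Granted these memberships, the two squares in $S_3$ may be expanded legitimately:
\begin{align*}
\int_{\mathbb{D}}\big|A_f+2B_f\big|^2|dz|^2&=\int_{\mathbb{D}}|A_f|^2|dz|^2+4\,\Re\int_{\mathbb{D}}A_f\bar{B_f}\,|dz|^2+4\int_{\mathbb{D}}|B_f|^2|dz|^2,\\
\int_{\C\setminus\bar{\mathbb{D}}}\big|A_g+2B_g\big|^2|dz|^2&=\int_{\C\setminus\bar{\mathbb{D}}}|A_g|^2|dz|^2+4\,\Re\int_{\C\setminus\bar{\mathbb{D}}}A_g\bar{B_g}\,|dz|^2+4\int_{\C\setminus\bar{\mathbb{D}}}|B_g|^2|dz|^2.
\end{align*}

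To match the $\mathbb{D}$ cross term with the exact form in the statement I would use that $A_f$ is \emph{holomorphic} on $\mathbb{D}$ (the pole of $\tfrac{f'}{f}$ being killed by $\tfrac2z$ thanks to $f(0)=0$); writing its Taylor series and integrating in the angular variable gives $\int_{\mathbb{D}}\tfrac{A_f(z)}{\bar z}\,|dz|^2=0$ (each integral $\int_0^{2\pi}e^{i(n+1)\theta}\,d\theta$ with $n\geq 0$ vanishes, and absolute convergence is immediate from $A_f\in L^2(\mathbb{D})$ and $\int_{\mathbb{D}}|z|^{-1}|dz|^2<\infty$). Hence $\Re\int_{\mathbb{D}}A_f\bar{B_f}=\Re\int_{\mathbb{D}}A_f\,\overline{\big(B_f-\tfrac1z\big)}=\Re\int_{\mathbb{D}}A_f\,\overline{\big(\tfrac{f'}{f}\tfrac{1}{1+|f|^2}-\tfrac2z\big)}$, which is precisely the cross term of the lemma, while $\int_{\mathbb{D}}|B_f|^2=\int_{\mathbb{D}}\big|\tfrac{f'}{f}\tfrac{1}{1+|f|^2}-\tfrac1z\big|^2$; on the exterior side $B_g=\tfrac{g'}{g}\tfrac{1}{1+|g|^2}$ carries no shift, so the two exterior terms already have the asserted shape.

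Finally I would assemble everything: substituting the two expansions into the definition of $S_3(\gamma)$, using $\log(1+|f(0)|^2)=0$, and then subtracting the expression $\int_{\mathbb{D}}|A_f|^2+\int_{\C\setminus\bar{\mathbb{D}}}|A_g|^2+4\pi\log|f'(0)|-4\pi\log|g'(\infty)|$ for $\pi I^L(\gamma)$ provided by \eqref{s2}, the boundary terms $4\pi\log|f'(0)|-4\pi\log|g'(\infty)|$ cancel and the identity $S_3(\gamma)=\pi I^L(\gamma)$ becomes exactly the asserted vanishing. The seven summands on the left-hand side are then, in this order: $4\Re\int_{\mathbb{D}}A_f\bar{B_f}$, $4\int_{\mathbb{D}}|B_f|^2$, $4\Re\int_{\C\setminus\bar{\mathbb{D}}}A_g\bar{B_g}$, $4\int_{\C\setminus\bar{\mathbb{D}}}|B_g|^2$, the two $\log|z|$-weighted integrals, and $4\pi$. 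The only genuinely delicate point I expect is the bookkeeping of the four $L^2$ memberships above, ensuring that the term-by-term splitting of $S_3(\gamma)-\pi I^L(\gamma)$ hides no $\infty-\infty$ cancellation; this rests on the same integrability estimates used in the proof of Theorem~\ref{third_loewner}, together with the fact that $f(0)=0$ places $0$ in the bounded component $\Omega$ and so keeps $1/f$, $1/g$ bounded near the relevant boundaries.
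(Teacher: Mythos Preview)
Your proof is correct and follows essentially the same route as the paper: both decompose the integrands of $S_3(\gamma)$ as $A_f+2B_f$ and $A_g+2B_g$, expand the squares, and then subtract the expression for $\pi I^L(\gamma)$ coming from the inversion identity~\eqref{s2}. You are in fact more careful than the paper in two places: you make explicit the normalisation $f(0)=0$ (without which the $\tfrac{2}{z}$- and $\tfrac{1}{z}$-shifted integrands over $\mathbb{D}$ are not integrable), and you supply the step $\Re\int_{\mathbb{D}}A_f(z)/\bar z\,|dz|^2=0$ needed to pass from the naturally occurring cross term with $-\tfrac{1}{z}$ to the one with $-\tfrac{2}{z}$ as written in the statement, a point the paper's expansion~\eqref{new_grunsky2} leaves implicit.
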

	 \begin{proof}
	 Recall the definition of $S_3(\gamma)$ in Definition~\ref{def:s3}:  
	\begin{align}\label{new_grunsky1}
	 		S_3(\gamma)   	&=\int_{\mathbb{D}}\left|\frac{f''(z)}{f'(z)}-2\frac{f'(z)}{f(z)}\frac{|f(z)|^2}{1+|f(z)|^2}\right|^2|dz|^2+\int_{\C\setminus\bar{\mathbb{\mathbb{D}}}}\left|\frac{g''(z)}{g'(z)}-2\frac{g'(z)}{g(z)}\frac{|g(z)|^2}{1+|g(z)|^2}+\frac{2}{z}\right|^2|dz|^2\nonumber\\
	 		&+2\int_{\mathbb{D}}\log|z|\frac{4|f'(z)|^2|dz|^2}{(1+|f(z)|^2)^2}-2\int_{\C\setminus\bar{\mathbb{D}}}\log|z|\frac{4|g'(z)|^2|dz|^2}{(1+|g(z)|^2)^2}+4\pi\nonumber\\
	 		&+4\pi\log|f'(0)|-4\pi\log|g'(\infty)|
	 	\end{align}
	 	and that $\pi I^L(\gamma)=S_1(\gamma)=S_3(\gamma)$ from Theorem~\ref{s3}.
	 Using the identity  \eqref{s2}, we obtain
	 \begin{align}\label{new_grunsky2}
	     S_3(\gamma)  & = S_3 (\ii (\gamma)) \\
	         & =\int_{\mathbb{D}}\left|\frac{f''(z)}{f'(z)}-2\frac{f'(z)}{f(z)}+\frac{2}{z}+2\frac{f'(z)}{f(z)}\frac{1}{1+|f(z)|^2}-\frac{2}{z}\right|^2|dz|^2\nonumber\\
	 		&
	 		\quad +\int_{\C\setminus\bar{\mathbb{D}}}\left|\frac{g''(z)}{g'(z)}-2\frac{g'(z)}{g(z)}+\frac{2}{z}+2\frac{g'(z)}{g(z)}\frac{1}{1+|g(z)|^2}\right|^2|dz|^2+4\pi\log\abs{\frac{f'(0)}{g'(\infty)}}\nonumber\\
	 		&\quad +2\int_{\mathbb{D}}\log|z|\frac{4|f'(z)|^2|dz|^2}{(1+|f(z)|^2)^2}-2\int_{\C\setminus\bar{\mathbb{D}}}\log|z|\frac{4|g'(z)|^2|dz|^2}{(1+|g(z)|^2)^2}+4\pi\nonumber\\
	 		&=\int_{\mathbb{D}}\left|\frac{f''(z)}{f'(z)}-2\frac{f'(z)}{f(z)}+\frac{2}{z}\right|^2|dz|^2+\int_{\C\setminus\bar{\mathbb{D}}}\left|\frac{g''(z)}{g'(z)}-2\frac{g'(z)}{g(z)}+\frac{2}{z}\right|^2    |dz|^2	+4\pi\log\abs{\frac{f'(0)}{g'(\infty)}}\nonumber\\
	 		&\quad +4\,\Re\int_{\mathbb{D}}\left(\frac{f''(z)}{f'(z)}-2\frac{f'(z)}{f(z)}+\frac{2}{z}\right)\bar{\left(\frac{f'(z)}{f(z)}\frac{1}{1+|f(z)|^2}-\frac{2}{z}\right)}|dz|^2\nonumber\\
	 		&\quad +4\,\Re\int_{\C\setminus\bar{\mathbb{D}}}\left(\frac{g''(z)}{g'(z)}-2\frac{g'(z)}{g(z)}+\frac{2}{z}\right)\bar{\left(\frac{g'(z)}{g(z)}\frac{1}{1+|g(z)|^2}\right)}|dz|^2\nonumber\\
	 		&\quad +4\int_{\mathbb{D}}\left|\frac{f'(z)}{f(z)}\frac{1}{1+|f(z)|^2}-\frac{1}{z}\right|^2 |dz|^2 +4\int_{\C\setminus\bar{\mathbb{D}}}\left|\frac{g'(z)}{g(z)}\frac{1}{1+|g(z)|^2}\right|^2|dz|^2\nonumber\\
	 		&\quad +2\int_{\mathbb{D}}\log|z|\frac{4|f'(z)|^2|dz|^2}{(1+|f(z)|^2)^2}|dz|^2-2\int_{\C\setminus\bar{\mathbb{D}}}\log|z|\frac{4|g'(z)|^2|dz|^2}{(1+|g(z)|^2)^2}+4\pi.\nonumber
	 \end{align}
	 	Comparing \eqref{new_grunsky1} and \eqref{new_grunsky2}, we get the claimed identity.
	 \end{proof}
	 Let us check the formula in the case $\gamma=S^1$. 
	 In this case, we have $f(z)=z$, and $g(z)=z$, and the sum in Lemma \ref{grunsky} simplifies to
	 \begin{align}\label{sum}
	 	4\int_{\mathbb{D}}\left|\frac{1}{z}\frac{1}{1+|z|^2}-\frac{1}{z}\right|^2|dz|^2+4\int_{\C\setminus\bar{\mathbb{D}}}\left|\frac{1}{z}\frac{1}{1+|z|^2}\right|^2|dz|^2+2\int_{\mathbb{D}}\frac{4\log|z||dz|^2}{(1+|z|^2)^2}-2\int_{\mathbb{C}\setminus\bar{\mathbb{D}}}\frac{4\log|z||dz|^2}{(1+|z|^2)^2}+4\pi.
	 \end{align}
	 First, we have
	 \begin{align}\label{sum0}
	 	\int_{\mathbb{D}}\left|\frac{1}{z}\frac{1}{1+|z|^2}-\frac{1}{z}\right|^2|dz|^2=\int_{\mathbb{D}}\left|\frac{\z}{1+|z|^2}\right|^2|dz|^2=\int_{\mathbb{D}}\frac{|z|^2|dz|^2}{(1+|z|^2)^2}
	 \end{align}
	 and an immediate change of variable $z\mapsto\dfrac{1}{z}$ shows that 
	 \begin{align}\label{sum1}
	 \left\{\begin{alignedat}{1}
	 	&\int_{\mathbb{C}\setminus\bar{\mathbb{D}}}\left|\frac{1}{z}\frac{1}{1+|z|^2}\right|^2|dz|^2=\int_{\mathbb{D}}\frac{|z|^2|dz|^2}{(1+|z|^2)^2}\\
	 	&\int_{\mathbb{C}\setminus\bar{\mathbb{D}}}\frac{\log|z||dz|^2}{(1+|z|^2)^2}=-\int_{\mathbb{D}}\frac{\log|z||dz|^2}{(1+|z|^2)^2}.
	 	 \end{alignedat}\right.
	 \end{align}
	 By previous computations (Remark \ref{hemisphere}), we have
	 \begin{align}\label{sum2}
	 \left\{\begin{alignedat}{1}
	 	\int_{\mathbb{D}}\frac{4|z|^2|dz|^2}{(1+|z|^2)^2}&=4\pi\log(2)-2\pi\\
	 	\int_{\mathbb{D}}\frac{4\log|z||dz|^2}{(1+|z|^2)^2}&=-2\pi\log(2),
	 	\end{alignedat}\right.
	 \end{align}
	 which shows by \eqref{sum0}, \eqref{sum1} and \eqref{sum2} that the sum \eqref{sum} equals to
	 \begin{align*}
	 	2\int_{\mathbb{D}}\frac{		4|z|^2|dz|^2}{(1+|z|^2)^2}+4\int_{\mathbb{D}}\frac{4\log|z||dz|^2}{(1+|z|^2)^2}+4\pi=2\left(4\pi\log(2)-2\pi\right)+4\left(-2\pi\log(2)\right)+4\pi=0,
	 \end{align*}
	 as expected.

	 \nocite{}
	 \bibliographystyle{plain}
	 \bibliography{biblio}
	 
 \end{document}